\newtheorem{theorem}{Theorem}[section]
\newtheorem{lemma}[theorem]{Lemma}
\newtheorem{proposition}[theorem]{Proposition}
\newtheorem{remark}[theorem]{Remark}
\newtheorem{construction}[theorem]{Construction}
\newtheorem{definition}[theorem]{Definition}
\newtheorem{problem}{Problem}
\newcommand{\Soc}{\mathrm{Soc}}
\newcommand{\Sz}{\mathrm{Sz}}
\newcommand{\Aut}{\mathrm{Aut}}
\newcommand{\Inn}{\mathrm{Inn}}
\newcommand{\PSL}{\mathrm{PSL}}
\newcommand{\C}{{\mathrm{C}}}
\newcommand\DD{{\mathscr{D}}}
\newcommand{\K}{\mathrm{K}}
\newcommand{\Diag}{\mathrm{Diag}}
\newcommand{\Ree}{\mathrm{Ree}}
\newcommand{\Sym}{\mathrm{Sym}}
\newcommand{\Alt}{\mathrm{Alt}}
\title[Highly-arc-transitive digraphs]{Quasiprimitive and bi-quasiprimitive highly-arc-transitive digraphs and finite simple groups}
\thanks{This work is supported by the Deutsch Forschungs-gemeinschaft (DFG), Germany Reseacrh Foundation)-Project-ID: 491392403-TRR 358}
\author{Lei Chen}
\address[Lei Chen]{Faculty of Mathematics, Bielefeld University, Bielefeld 33615}
\email{Lei.Chen@math.uni-bielefeld.de}
\author{Cheryl E. Praeger}
\address[Cheryl E. Praeger]{Department of Mathematics and Statistics, The University of Western Australia, Perth WA 6009}
\email{Cheryl.Praeger@uwa.edu.au}
\date{}
\tikzstyle{level 1}=[level distance=3.5cm, sibling distance=3.5cm]
\tikzstyle{level 2}=[level distance=3.5cm, sibling distance=2cm]
\begin{document}

\tikzstyle{bag} = [text width=4em, text centered]
\tikzstyle{end} = [circle, minimum width=3pt,fill, inner sep=0pt]

\begin{abstract}
    We extend the notion of an $H$-normal quotient digraph of an $H$-vertex-transitive digraph to that of an $H$-subnormal quotient digraph. Using these concepts, together with bipartite halves of bipartite digraphs, we show that, for each finite connected  $H$-vertex-transitive, $(H,s)$-arc-transitive digraph with $s\geqslant6$, either some $H$-normal quotient is a directed cycle of length at least $3$, or there is an $(L,t)$-arc-transitive digraph with $t\geqslant (s-3)/2$, and $L$ a vertex-quasiprimitive almost simple group with socle a composition factor of $H$.  This connection demonstrates that, to understand finite $s$-arc-transitive digraphs with large $s$, those admitting a vertex-quasiprimitive almost simple $s$-arc-transitive subgroup of automorphisms play a central role. We show that for each  $s$ and each odd valency $k$, there are infinitely many $(H,s)$-arc-transitive digraphs of valency $k$ with $H$ a finite  alternating group.
   
    In addition we discovered a novel construction which takes as input a connected non-bipartite $H$-vertex-transitive, $(H,s)$-arc-transitive digraph, and outputs a connected bipartite $G$-vertex-transitive, $(G,2s)$-arc-transitive digraph with $G=(H\times H).2$. This leads to construction of vertex-bi-quasiprimitive $s$-arc-transitive digraphs, for arbitrarily large $s$.   Our investigations yield several new open problems.

\medskip\noindent
    \textbf{Key words:} vertex-transitive; \(s\)-arc-transitive; digraph; quasiprimitive groups; simple groups

\medskip\noindent
    \textbf{2020 Mathematics Subject Classification:} 05C25, 20B25

\end{abstract}

\maketitle

\section{Introduction}\label{s:intro}

The investigation of finite highly-arc-transitive digraphs traces back at least to 1989, when the second author~\cite[Theorem 2.8]{Praeger} proved that, for any positive integers \(s, k\), there exist  infinitely many finite connected \(s\)-arc-transitive digraphs of valency \(k\). She also proved~\cite[Lemma 3.2]{Praeger} that the \(s\)-arc-transitivity of a finite \(G\)-vertex-transitive digraph \(\Gamma\) is inherited by each $G$-normal quotient $\Gamma_N$ relative to a normal subgroup \(N\unlhd G\) with more than two orbits on vertices (Definition~\ref{d:nquot}). Our aim is to demonstrate that finite connected digraphs admitting a \emph{vertex-quasiprimitive almost simple $2$-arc-transitive} subgroup of automorphisms play a central role in describing the family of finite connected $s$-arc-transitive digraphs with $s\geq6$. 

In this paper, a digraph $\Gamma$ consists of a set $V\Gamma$ of vertices and a set $E\Gamma \subseteq V\Gamma\times V\Gamma$ of
arcs (or edges) such that  if $(u,v) \in E\Gamma$ then  $(v,u) \not\in E\Gamma$. In particular if $(u,v)$ is an arc then $u\ne v$, that is, a digraph has no loops. For a non-negative integer $s$, an $s$-arc in a digraph $\Gamma$ is a sequence $(v_0,\dots,v_s)$ of $s+1$ vertices such
that $(v_i,v_{i+1}) \in E\Gamma$ for $0 \leqslant i < s$. This means in particular that $v_{i-1}, v_i,v_{i+1}$ are pairwise distinct for $0< i < s$. The automorphism group $\Aut(\Gamma)$ consists of all permutations of $V\Gamma$ which leave the arc set $E\Gamma$ invariant, and for $G\leq \Aut(\Gamma)$, $\Gamma$ is said to be \emph{$(G,s)$-arc-transitive} if $G$ is transitive on the set of $s$-arcs. We sometimes refer to the \emph{underlying undirected graph} of a digraph $\Gamma$: this is the graph with vertex set  $V\Gamma$ and edges the unordered pairs $\{u,v\}$ such that $(u,v)\in E\Gamma$.  If $\Gamma$ is a finite connected digraph that is $(G,s)$-arc-transitive, for some $s\geq1$, and each vertex $u$ is the initial vertex of some arc $(u,v)$, then in particular $G$ is vertex-transitive. We say that $G$ is \emph{quasiprimitive} on $V\Gamma$ if each nontrivial normal subgroup of $G$ is transitive on $V\Gamma$, and that $G$ is \emph{bi-quasiprimitive} on $V\Gamma$ if each nontrivial normal subgroup of $G$ has at most two (equal length) orbits in $V\Gamma$, and some normal subgroup has two orbits in $V\Gamma$.  

The directed cycle $\C^{\to}_r$, for $r\geqslant3$, has automorphism group $Z_r$ and is $s$-arc-transitive for all $s$, and such graphs are regarded as rather degenerate examples.  
Each of the $(G,s)$-arc-transitive transitive digraphs constructed in \cite[Definition 2.6, Theorem 2,8]{Praeger} has a $G$-normal quotient that is a directed cycle $\C^{\to}_r$ of length $r$ for some $r\geq3$. On the other hand, a very different infinite family of examples was constructed in \cite[Theorem 1]{CLP}, namely for all integers $k\geq 2, s\geq 1$, and any integer $n>(s+1)k$ with $n$ coprime to $s+1$, a $(G,s)$-arc-transitive digraph of valency $k$ was explicitly constructed with $G$ the alternating group $\Alt(n)$ or symmetric group $\Sym(n)$ of degree $n$ (depending on the parities of $s, k$ and $n$), and $G$ acting regularly on the set of $s$-arcs. Clearly none of these digraphs has a cyclic $G$-normal quotient of order greater than $2$.  

Our aim is to study $G$-vertex-transitive $(G,s)$-arc-transitive digraphs with no cyclic $G$-normal quotients, that is, the following family of graphs/groups: 
\begin{equation}\label{e:family}
    \DD = \big\{ (\Gamma, G, s) \mid \begin{array}{l}
         \Gamma \ \mbox{finite connected digraph, } G\leq \Aut(\Gamma), s\geqslant 2; \Gamma\, \mbox{is $G$-vertex-transitive}  \\
          \mbox{and $(G,s)$-arc-transitive with no $G$-normal quotients $\C_r^\to$ for any $r\geqslant3$ }\\
    \end{array}
   \big\}
\end{equation}
We are particularly interested in members with large values of $s$, and we show (Theorem~\ref{t:main}) that each $(\Gamma, G, s)\in\DD$ with $s\geqslant6$ can be associated with a possibly smaller $(\Gamma^*, G^*, s^*)\in\DD$ where the group $G^*$ is an \emph{almost simple group}, that is, $T\unlhd G^*\leqslant \Aut(T)$ for some nonabelian simple group $T$, and $G^*$ acts quasiprimitively on $V\Gamma^*$ with $s^*$ at least  roughly $s/2$. Our analysis consists of several reduction steps of the types described in Definition~\ref{d:reln}. 

\begin{definition}\label{d:reln}
    {\rm
    For $ (\Gamma^{(1)},G^{(1)},s^{(1)})\in\DD$, we write  
    \[ 
    (\Gamma^{(1)},G^{(1)},s^{(1)}) \ \succ\   (\Gamma^{(2)},G^{(2)},s^{(2)})
    \]
if $\Gamma^{(2)}$ is a $G^{(1)}$-normal quotient of  $\Gamma^{(1)}$ (as in Definition~\ref{d:nquot} with $s^{(1)}\geq 2$), or $\Gamma^{(2)}$ is a  $G^{(1)}$-subnormal  quotient of  $\Gamma^{(1)}$ (as in Definition~\ref{d:nquot}  with $s^{(1)}\geq 3$),  or $\Gamma^{(2)}$ is a bipartite half of  $\Gamma^{(1)}$ (as in Definition~\ref{d:biphalf} with $s^{(1)}\geq 4$), and $(G^{(2)},s^{(2)})$ is as in Proposition~\ref{p:quot}(a) or (b), or Proposition~\ref{p:bip}(a), respectively. It follows from Propositions~\ref{p:quot} and~\ref{p:bip} that the triple $(\Gamma^{(2)},G^{(2)},s^{(2)})$ lies in $\DD$ in each case.
    }
\end{definition}

We can now state our main result.


\begin{theorem}\label{t:main}
Let $(\Gamma^{(1)},G^{(1)},s^{(1)})\in\DD$ with $s^{(1)}$ as in Table~\ref{tab:main}. Then there is a reduction sequence 
\[
(\Gamma^{(1)},G^{(1)},s^{(1)}) \ \succ\   (\Gamma^{(2)},G^{(2)},s^{(2)}) \ \succ\  \dots  \ \succ\  (\Gamma^{(n)},G^{(n)},s^{(n)})
\]
as in Definition~\ref{d:reln} with each $(\Gamma^{(i)},G^{(i)},s^{(i)})\in\DD$, such that $n, s^{(n)}$ are as in Table~\ref{tab:main}, and  $G^{(n)}$ is almost simple and vertex-quasiprimitive on $\Gamma^{(n)}$ with socle a composition factor of $G^{(1)}$. 
\end{theorem}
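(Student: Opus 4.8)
The plan is to construct the reduction sequence greedily, by induction on $|V\Gamma^{(1)}|$, performing exactly one $\succ$-reduction at each step according to the group-theoretic structure of the current triple, and invoking Propositions~\ref{p:quot} and~\ref{p:bip} to guarantee that every intermediate triple remains in $\DD$. So suppose $(\Gamma,G,s)\in\DD$ with $s$ at least the bound recorded in Table~\ref{tab:main}. If $G$ is already vertex-quasiprimitive and almost simple we stop; otherwise I split into the two regimes according to whether $G$ is vertex-quasiprimitive, and in each regime I exhibit a single reduction to a strictly smaller triple in $\DD$ to which the inductive hypothesis applies. Since each operation replaces $G$ by a section of it and strictly decreases the vertex number, termination and the bound on the length $n$ follow, the latter because bipartite halves occur at most once and subnormal quotients at most the composition length of $\Soc(G)$.

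Suppose first that $G$ is \emph{not} vertex-quasiprimitive, so there is a nontrivial intransitive $N\unlhd G$. If some such $N$ has at least three orbits on $V\Gamma$, I take the $G$-normal quotient $\Gamma_N$ of Definition~\ref{d:nquot}; by Proposition~\ref{p:quot}(a) the resulting triple lies in $\DD$ with the same value of $s$ and strictly fewer vertices. If instead every intransitive normal subgroup has exactly two orbits, then those orbits form a bipartition, $\Gamma$ is bipartite, $G$ is vertex-bi-quasiprimitive, and I pass to a bipartite half (Definition~\ref{d:biphalf}); Proposition~\ref{p:bip}(a) then supplies a triple in $\DD$ on strictly fewer vertices at the cost of (roughly) halving $s$. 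The quantitative heart of the whole theorem is that this halving is incurred at most once, which I expect to follow from Proposition~\ref{p:bip}(a) guaranteeing that a bipartite half is itself vertex-quasiprimitive (equivalently, non-bipartite), so that no further bipartite-half reduction is ever required; this is what produces the bound $s^{(n)}\geq (s-3)/2$ in Table~\ref{tab:main}.

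Now suppose $G$ is vertex-quasiprimitive. I appeal to the O'Nan--Scott description of quasiprimitive groups and argue on the socle $M=\Soc(G)\cong T^k$. If $M$ is abelian then $G$ has affine type with $M$ regular, and I would rule this out as incompatible with membership in $\DD$, since a regular abelian socle together with $s$-arc-transitivity forces a directed-cycle normal quotient $\C^\to_r$; thus this case does not arise under the $s$-bound of Table~\ref{tab:main}. If $k=1$ then $G$ is almost simple and the sequence terminates with the required endpoint. The substantive case is $k\geq 2$, covering the product, diagonal, holomorph-compound and twisted-wreath types: here each simple direct factor $T_i$ of $M$ is \emph{subnormal} in $G$ though typically not normal, which is precisely the situation for which the subnormal-quotient reduction of Definition~\ref{d:nquot} was introduced. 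I take $K$ to be a product of $k-1$ of the factors (or the appropriate subnormal subgroup dictated by the type), form the $G$-subnormal quotient $\Gamma_K$, and invoke Proposition~\ref{p:quot}(b): provided $s\geq 3$ this yields a triple in $\DD$ whose group has socle $T^{k'}$ with $k'<k$ and whose $s$-value drops only by a bounded additive constant. Iterating drives $k$ down to $1$.

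The main obstacle is the interface between the three reductions inside the quasiprimitive phase: I must check that peeling off simple factors by subnormal quotients never creates a cyclic normal quotient (so the triple stays in $\DD$) and never re-introduces a genuinely bi-quasiprimitive action that would demand a second, $s$-halving, bipartite half. The delicate cases are the diagonal types, where the factors $T_i$ meet a point stabiliser in a diagonal subgroup; there I would rely on Proposition~\ref{p:quot}(b) both to certify that the chosen $K$ has at least three orbits, so that $\Gamma_K$ is a genuine digraph, and to confirm that enough arc-transitivity survives. The remaining bookkeeping is then immediate by induction: since each of the three operations replaces the acting group by a quotient $G/N$, a subquotient $\N_G(K)/K$, or an index-two subgroup acting on a half, the composition factors of $G^{(i+1)}$ are among those of $G^{(i)}$, so the simple socle $\Soc(G^{(n)})=T$ of the almost simple endpoint is a composition factor of $G^{(1)}$, as required.
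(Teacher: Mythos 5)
Your proposal reproduces the paper's high-level skeleton (normal quotient to a (bi-)quasiprimitive action, one bipartite half, one subnormal quotient to reach an almost simple group), but two of its load-bearing steps are wrong, and both are exactly the points where the paper has to work hardest.

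First, the bipartite case. Your claim that the $s$-halving is incurred at most once rests on the assertion that Proposition~\ref{p:bip}(a) guarantees the bipartite half is vertex-quasiprimitive (``equivalently, non-bipartite''). Proposition~\ref{p:bip}(a) says nothing of the kind: it gives only connectivity, $G^+$-vertex-transitivity and $(G^+,\lfloor s/2\rfloor)$-arc-transitivity, and quasiprimitivity is not equivalent to non-bipartiteness in any case. Indeed Proposition~\ref{p:bip}(b) explicitly allows the normal quotient of the half to be merely bi-quasiprimitive, so on the face of it a second halving (and a second degradation of $s$) cannot be excluded by the results you cite. The paper closes this gap with the structure theory of bi-quasiprimitive groups from \cite{Praeger03}: Proposition~\ref{Biquasi0} shows $G^+=\Diag_\varphi(H\times H)$, so the half carries a faithful $H$-action; then by Lemma~\ref{l:i-or-iia} and Remark~\ref{r:Biquasi}, either $H$ is already quasiprimitive of type AS or PA (using $\lfloor s/2\rfloor\geq 3$, which is why $s^{(1)}\geq 6$ is needed), or $H$ has an intransitive minimal normal subgroup $R$ and the further normal quotient by $C_H(R)$ is \emph{quasiprimitive} of type AS or PA (Proposition~\ref{Biquasi1}, whose proof is the delicate part). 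Without this, your bounds $s^{(n)}\geq\lfloor s^{(1)}/2\rfloor-1$ and $n\leq 5$ are unsupported.

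Second, the quasiprimitive phase. Your plan to handle the non-AS types by ``peeling off'' simple factors via subnormal quotients fails outright for the diagonal types: in type SD with $\Soc(G)=T^k$, the product $K$ of any $k-1$ simple factors is \emph{transitive} (in fact regular) on vertices, since a point stabiliser in the socle is a full diagonal subgroup; so $|V_K|=1$ and no subnormal quotient as in Definition~\ref{d:nquot}/Proposition~\ref{p:quot}(b) exists. The paper instead \emph{excludes} SD and CD for $s\geq 3$ by citing Giudici--Xia \cite{quasi} (Proposition~\ref{p:pa}(a)), and excludes HA, HS, HC and TW because their socles contain vertex-regular normal subgroups, which forces a directed cycle by \cite[Theorem 3.1]{Praeger}; your argument applies this reasoning only to the abelian case. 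Finally, for type PA you should not iterate factor-by-factor: a single subnormal quotient with $N=G\cap(1\times H^{m-1})$ already yields $N_G(N)/N\cong H$ almost simple (Proposition~\ref{p:pa}(b)), costing $s$ exactly $1$, whereas your iteration costs $s$ up to $k-1$ (unbounded) and, like your greedy one-intransitive-normal-subgroup-at-a-time quotients, blows past the step bounds $n\leq 3$, resp.\ $n\leq 5$, of Table~\ref{tab:main}; the latter must be replaced by a single quotient modulo a \emph{maximal} intransitive normal subgroup as in Proposition~\ref{p:quot2}(c).
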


\begin{table}[]
    \centering
    \begin{tabular}{cccc}
    \hline
    $\Gamma^{(1)}$ bipartite  &$s^{(1)}\geqslant$  & possibilities for \(n\) &\(s^{(n)}\geqslant\) \\
       \hline
    no  & 3 & $n\in\{ 1, 2, 3\}$ & $s^{(1)}-1$ \\
    yes & 6 & $n\in\{2, 3,4,5\}$ & $\lfloor s^{(1)}/2\rfloor -1$ \\
    \hline
      \end{tabular}
    \caption{Parameter information for Theorem~\ref{t:main}}
    \label{tab:main}
\end{table}

The first step in the proof of Theorem~\ref{t:main} is to make a normal quotient reduction to a triple in $\DD$ with the group  quasiprimitive or bi-quasiprimitive on vertices and the integer `$s$' at least $3$ (Proposition~\ref{p:quot2}(c)). 
If the group is vertex-quasiprimitive we build on work of  Giudici and Xia~\cite{quasi} who proved that such a group must be  of type AS (almost simple) or PA. This analysis is given in Section~\ref{section:quasi}. 
On the other hand, if the group action is bi-quasiprimitive, then the original digraph, as well as the bi-quasiprimitive normal quotient digraph, is bipartite, and we reduce further to the bipartite half as in Proposition~\ref{p:bip}. A crucial tool we exploit is a detailed description of the group structure of finite bi-quasiprimitive groups by the second author in \cite{Praeger03}. Although this group theoretic information has been used in the study of finite \(2\)-arc-transitive bipartite graphs, it has not to our knowledge been used before to analyse digraphs (a very different situation). We make a delicate analysis of these actions on digraphs in Section~\ref{section:biquasi}. The formal proof of Theorem~\ref{t:main} is given in Section~\ref{proof of t:main}.

\subsection{Commentary and open problems}

We finish this section with some remarks and commentary, including some open problems that arise naturally from considering Theorem~\ref{t:main}. 

\subsubsection{The simple groups arising in Theorem~\ref{t:main}.}\label{s:simple}
The nonabelian simple group arising as the socle of the final group $G^{(n)}$ in Theorem~\ref{t:main} is a composition factor of the initial group $G^{(1)}$, and all the groups $G^{(i)}$ are sections of the initial group $G^{(1)}$. To see this, note that the group arising from a $G$-normal quotient reduction, or a $G$-bipartite-half reduction, is a quotient of $G$ or a normal subgroup of $G$, respectively (Proposition~\ref{p:quot}(a) and Proposition~\ref{p:bip}(a)). For a $G$-subnormal quotient reduction, the group arising is $N_G(N)/N$ with a nontrivial normal subgroup $M/N$ (Proposition~\ref{p:quot}(b)). Such a reduction is only used in the proof of Theorem~\ref{t:main} at the last step: the group $G$ is quasiprimitive of type PA with socle $M$, and $M/N$ is the nonabelian simple socle of the final almost simple group.    

Thus, for a finite connected $(G,s)$-arc-transitive digraph with large $s$, it follows from Theorem~\ref{t:main} that some nonabelian composition factor of $G$ is the socle of an almost simple group $G^*$ for which there exists a finite connected $(G^*,s^*)$-arc-transitive digraph with $s*\geqslant \lfloor s/2\rfloor -1$. 

\begin{problem}\label{p1}
    For a given finite nonabelian simple group $T$, decide if there exists a $(G,s)$-arc-transitive digraph with $G$ vertex-quasiprimitive, $T$ the socle of $G$, and $s\geqslant3$. Which families of simple groups arise with $s$ unboundedly large?
\end{problem}

Examples where $T$ is an alternating group $A_n$ with $n$ arbitrarily large occur as possibilities in Problem~\ref{p1}, see \cite[pp 66--67]{CLP}. In Section \ref{Sec:Ex} we explore the construction in \cite{CLP} and show that, for each even integer $s$ and each odd integer $k>1$, the construction produces  connected $(G,s)$-arc-transitive digraphs of valency $k$, such that infinitely many of these are $G$-vertex-quasiprimitive with $G$ a finite alternating group, and infinitely many of them are $G$-vertex-bi-quasiprimitive with $G$ a finite symmetric group, see Proposition~\ref{ex: alt and sym 2}.

In forthcoming work~\cite{LP} the authors show that large dimensional special linear groups also yield highly arc-transitive digraphs as in Problem~\ref{p1}.  

We note that $(G,s)$-arc-transitive, \emph{$G$-vertex-primitive} digraphs with $G$ almost simple exist for $s=2$. The first (and so far only) examples were given by Giudici, Li and Xia in \cite{infinite}, with 
\(G=\PSL_{3}(p)\) and \(p\equiv \pm2\mod{5}\), \(p\geqslant7\). Moreover  it was conjectured in \cite[Question 2]{infinite} that there is a bound on $s$ for such examples, and the first author conjectured in \cite[Conjecture 1.3]{LChenThesis} that this upper bound should be $2$, that is, the conjecture is that there do not exist any $(G,3)$-arc-transitive, $G$-vertex-primitive digraphs with $G$ almost simple. This conjecture has been proved for many families of simple groups, see \cite{alternating2, reeandsuzuki,symplectic, alternating, exceptional}. Thus we expect that the $(G,3)$-arc-transitive examples for Problem~\ref{p1} will have the group $G$, in its vertex action, being a \emph{`quimp group'}, that is, quasiprimitive and imprimitive. 

\subsubsection{Vertex-quasiprimitive $s$-arc-transitive graphs of PA type}

In \cite[Lemma 2.7]{quasi} and in \cite[Proof of Proposition 4.2]{Praeger} constructions were given for $(G,s)$-arc-transitive digraphs with $G$ vertex-quasiprimitive of PA type. In \cite{quasi}, these digraphs were called \emph{direct products} of the input digraphs. We give here a version relevant for us (which is not covered explicitly by either of the cases in \cite{quasi, Praeger}).

\begin{construction}\label{ex:PA}
{\rm 
    Let $s, m$ be integers such that $s\geq2$ and  $m\geq2$. Let $\Delta=(V_0,A_0)$ be a finite connected $(X,s)$-arc-transitive digraph. Define the \emph{direct product} $\Gamma=(V,A)$ to be the  digraph such that $V=V_0^m$ and $((u_1,\dots,u_m),(v_1,\dots,v_m))\in A$ if and only if $(u_i,v_i)\in A_0$ for each $i=1,\dots,m$. Then
    \begin{enumerate}
        \item[(a)]  $\Gamma$ is an $(X^m,s)$-arc-transitive digraph, by \cite[Lemma 2.7]{quasi};
        \item[(b)]  $\Aut(\Gamma)$ contains $X\wr \Sym(m)$. (This assertion follows from the proof of  \cite[Proposition 4.2]{Praeger}.)
        \item[(c)] If $X$  is almost simple and vertex-(quasi)primitive on $\Delta$, then the wreath product $X\wr \Sym(m)$ is  vertex-(quasi)prim\-it\-ive on $\Gamma$ of type PA (see for example, \cite[Theorem 5.18]{csaba}). 
        \item[(d)] In particular, if $\Delta$ is one of the $X$-vertex-primitive $(X,2)$-arc-transitive digraphs constructed in \cite[Theorem 1.1]{infinite}, with $X=\PSL_3(p^2)$ and $p\equiv \pm 2 \pmod{5}$, then $\Gamma$ is an $(X\wr \Sym(m),2)$-arc-transitive digraph with $X\wr \Sym(m)$ vertex-primitive of type PA.
    \end{enumerate}
    }
\end{construction}
Given the discussion in Subsection~\ref{s:simple}, we do not anticipate finding $s$-arc-transitive vertex-primitive examples of type PA with $s\ge3$. 
On the other hand taking $\Gamma_0$ to be one of the examples from \cite{CLP} yields examples of $(H\wr \Sym(m),s)$-arc-transitive digraphs with $H\wr \Sym(m)$ vertex-quasiprimitive of type PA (but not vertex-primitive), for and $m\geqslant2$ and arbitrarily large $s$ (Proposition~\ref{ex alt and sym 3}). 

A different aspect is the following: for a finite quasiprimitive permutation group $G$ of type PA on a set $\Omega$, the socle of $G$ is of the form $T^m$ for some nonabelian finite simple group $T$ and $G\leq H\wr \Sym(m)$, where $H\leq \Sym(V_0)$ is almost simple with socle $T$. However in general the set $\Omega$ is not equal to $V_0^m$ (which was the case  in Construction~\ref{ex:PA}); rather $G$ preserves a (not necessarily trivial) partition $\Sigma$ of $\Omega$, such that $\Sigma$ can be identified with  $V_0^m$. In \cite[Example 2.3]{quasi}, an explicit infinite family of  examples was given of $(G,2)$-arc-transitive digraphs with $G$ vertex-quasiprimitive of type PA where the parameter $m=2$ and the vertex set could not be identified with $V_0^2$. 

\begin{problem}\label{p2}
    Let $m, s$ be integers such that $\min\{m,s\}\geq2$ and $(m,s)\ne(2,2)$.  Decide whether or not there exist connected $G$-vertex-quasiprimitive, $(G,s)$-arc-transitive digraphs, with $G \leqslant \Sym(V_0)\wr \Sym(m)$ of type $PA$, and with $V\Gamma$ not equal to $V_0^m$.
\end{problem}

\subsubsection{ Vertex-quasiprimitive \(s\)-arc-transitive digraphs of diagonal type {\rm (SD and CD)}} \label{sub:SD}
The first main result of Giudici and Xia~\cite[Theorem 1.2]{quasi} gives a construction and classification of vertex-quasiprim\-it\-ive, \(2\)-arc-transitive digraphs of simple diagonal type SD, proving also that the examples they give are vertex-primitive, and that no $3$-arc-transitive digraphs of SD-type exist. We give more details of this construction in Remark~\ref{r:sd}. These examples of SD-type yield examples of compound diagonal type CD using Construction~\ref{ex:PA}, and it was shown in \cite[Corollary 1.4 ]{quasi} that all $2$-arc-transitive, vertex-quasiprimitive digraphs of type CD arise from this construction, and that none are $3$-arc-transitive. 

From this discussion, and in light of the reduction strategy we use requiring subnormal quotients (see Proposition~\ref{p:quot}), it seems unlikely that we could achieve a version of Theorem~\ref{t:main} for non-bipartite graphs with the parameters $s^{(1)}=s^{(n)}=2$. At least a different final step of the reduction pathway would be needed. So taking $s^{(1)}=3$ for non-bipartite digraphs seems the best we can do.


Also, we questioned whether our result for the bipartite case was best possible. For example, if \(4\leq s^{(1)}\leq5\), then in the reduction path we may potentially obtain a `bipartite half' that is  \(2\)-arc-transitive  and quasiprimitive of type SD or CD (see Lemma~\ref{l:i-or-iia}(a) and Proposition~\ref{Biquasi1}(b)).   It would be interesting to know if this situation can arise. We comment more on this  in Remark~\ref{r:sd}.

\begin{problem}\label{p4}
    Do there exist \(G\)-bi-quasiprimitive, \((G,s)\)-arc-transitive digraphs, where $s\in\{4,5\}$ and a bipartite half is a  \(2\)-arc-transitive digraph that is vertex-quasiprimitive of type SD or CD?
\end{problem}

\subsubsection{ Vertex-bi-quasiprimitive \(s\)-arc-transitive digraphs} \label{sub:biqp}

We tried, but failed, to construct a `bipartite doubling' of a given vertex-quasiprimitive \(s\)-arc-transitive digraph $\Delta$ that aimed to produce a vertex-bi-quasiprimitive \(2s\)-arc-transitive digraph with bipartite halves isomorphic to $\Delta$. Our efforts instead led to a new construction of a vertex-transitive \(2s\)-arc-transitive digraph from a given vertex-transitive \(s\)-arc-transitive digraph. This is described in Construction~\ref{con1}, and we explore properties of the construction in Section~\ref{s:conn}. 

First we note that, if we use as input to Construction~\ref{con1} one of the triples $(\Delta,X,s)\in \DD$ discussed in Subsection~\ref{s:simple} with $X$ a finite alternating or symmetric group, then for each given valency $k$ and each integer $s$, we obtain infinitely many triples $(\Gamma,G,2s)\in D$ with digraphs $\Gamma$ of valency $k$ and $G$-vertex-biquasiprimitive as in Lemma~\ref{l:i-or-iia}(b), see Propositions~\ref{ex: alt and sym 2} and~\ref{ex alt and sym 3}. 

Alternatively, if we take as input to  Construction~\ref{con1} one of the $(X,2)$-arc-transitive $X$-vertex-primitive digraphs $\Delta$ of type SD from  Giudici and Xia~\cite[Theorem 1.2]{quasi} discussed in Section~\ref{sub:SD}, then by Proposition~\ref{p:SD} the output digraph $\Gamma$, with automorphism group $G$, is a $(G,4)$-arc-transitive $G$-vertex-bi-quasiprimitive digraph which is not $(G,5)$-arc-transitive, and we have $(\Gamma,G,4)\in\DD$.
Moreover, if we were to apply the reduction strategy in the proof of Theorem~\ref{t:main} to the triple $(\Gamma^{(1)},G^{(1)},s^{(1)}) = (\Gamma,G,4)$ (see the description in `Claim 3' in Section~\ref{proof of t:main}) then: 
\begin{itemize}
    \item since $\Gamma$ is $G$-vertex-bi-quasiprimitive, the first reduction we would make is to form the bipartite half $\Gamma^{(2)}$ of $\Gamma$. By Proposition~\ref{p:SD}, $\Gamma^{(2)}=\Delta\times\Delta$, the direct product  admitting $G^+= X\times X$, and we would obtain $(\Gamma^{(2)},G^{(2)},s^{(2)}) = (\Delta\times\Delta,G^+,2)\in\DD$, with $G^+$ not quasiprimitive on $V\Gamma^{(2)}$. 
    \item The second reduction would be to form the $G^+$-normal quotient as in Proposition~\ref{Biquasi1}, yielding $(\Gamma^{(3)},G^{(3)},s^{(3)}) = (\Delta,X,2)\in\DD$, see Proposition~\ref{p:SD}.
\end{itemize}
There would be in fact no further  reductions available which would allow us to stay inside the set $\DD$.  If we were to form an $X$-subnormal quotient to obtain an almost simple group action,  we could not guarantee obtaining a $2$-arc-transitive action (Proposition~\ref{p:quot2}(b)). Thus the reduction procedure would end with the triple $(\Delta,X,2)\in\DD$. These examples show that, for bipartite digraphs, Theorem~\ref{t:main} would fail if we took $s^{(1)}=4$. We do not have examples of $5$-arc-transitive digraphs to demonstrate a similar failure if  $s^{(1)}=5$.

\begin{problem}\label{p5}
    Do there exist \(G\)-bi-quasiprimitive, \((G,5)\)-arc-transitive digraphs $\Gamma$ for which a reduction strategy as in Section~\ref{proof of t:main} applied to $(\Gamma^{(1)},G^{(1)},s^{(1)}) = (\Gamma,G,5)$ fails to yield  $(\Gamma^{(n)},G^{(n)},s^{(n)})\in\DD$ with $G^{(n)}$ almost simple and vertex-quasiprimitive? 
\end{problem}

\subsubsection{Basic $s$-arc-transitive digraphs of cycle type}
We end our commentary by considering the smallest family of $s$-arc-transitive digraphs which do not yield members of $\DD$. 
As mentioned above, for each of the digraphs $\Gamma=\C_r(v,t)$ (with $1\le t\le r-s$) in \cite[Definition 2.6 and Theorem 2.8]{Praeger}, the full automorphism group is $G=S_v\wr Z_r$ and $\Gamma$ is  $(G,s)$-arc-transitive. Moreover the $G$-normal quotient $\Gamma_N$, for $N=S_v^r$, is the directed cycle $\C_r^\to$. These are not the only $s$-arc-transitive digraphs with cyclic normal quotients and $s$ arbitrarily large. For example, \cite[Definition 2.10]{Praeger} gives a more general construction, and \cite[Remark 2.12]{Praeger} asks `how typical these examples are of $(G,s)$-arc-transitive digraphs' with a $G$-normal quotient $\C_r^\to$. 

Suppose we restrict our attention to $(G,s)$-arc-transitive digraphs $\Gamma$ which are not cycles, which possess at least one $G$-normal quotient $\C_r^\to$ with $r\ge3$, and for which all $G$-normal quotients  $\Gamma_N$ with $1\ne N\lhd G$ are directed cycles. We call such digraphs \emph{basic of cycle type}.  

\begin{problem}\label{p3}
    Describe the 
    basic $(G,s)$-arc-transitive digraphs of cycle type with a $G$-normal quotient $\C_r^\to$, for given $r, s\ge3$. 
\end{problem}

\section{Reduction steps for $(G,s)$-arc-transitive digraphs}\label{s:redn}

In order to prove Theorem~\ref{t:main} we use three kinds of reduction steps for finite connected $(G,s)$-arc-transitive digraphs $\Gamma$. The first two involve forming quotient graphs where we amalgamate vertices but essentially keep the same adjacency relation, while the third kind changes the nature of adjacency. These are discussed in  Subsections~\ref{s:nquot} and~\ref{s:bipartite}, respectively.  

We often use the following fact: \emph{if a connected vertex-transitive digraph is $(G,s)$-arc transitive, then it is also $(G,s')$-arc transitive for all $s'\leqslant s$ (since each $s'$-arc can be extended to an $s$-arc).}  

\subsection{Normal and subnormal quotients}\label{s:nquot}

Throughout this subsection let  \(\Gamma=(V,A)\) be a finite connected digraph with vertex set $V$ and arc set $A\subseteq V\times V$, and let $N$ be a subgroup of $\Aut(\Gamma)$ that is intransitive on $V$.
We introduce our most basic notion of a quotient $\Gamma_N$, and specialise to normal and subnormal quotients.

\begin{definition}\label{d:nquot}
{\rm 
    Let $V_N$ denote that set of $N$-orbits in $V$ and define the \emph{quotient} \(\Gamma_N\) to have vertex set $V_N$ and arc set $A_N\subseteq V_N\times V_N$ consisting of all pairs $(U,U')\in V_N\times V_N$ such that,  for some $u\in U, u'\in U'$, the pair $(u,u')\in A$.  

    In particular suppose that there exists a vertex-transitive subgroup $G\leq \Aut(\Gamma)$ containing $N$. If $N\unlhd G$, then $\Gamma_N$ is called the \emph{$G$-normal quotient of $\Gamma$ relative to $N$}. Alternatively, if $N \unlhd M\unlhd G$, $N$ is not normal in $G$, and $M$ is transitive on $V$, then $\Gamma_N$ is called a \emph{$G$-subnormal quotient of $\Gamma$ relative to $N$}.
    }
\end{definition}

In general a quotient $\Gamma_N$ need not be a digraph in our sense, since there may be an arc $(U,U')\in A_N$, such that $(U',U)$ is also an arc, for example, $\Gamma_N$ may have loops.  However we are concerned with $(G,s)$-are-transitive digraphs $\Gamma$, for some $s\geq3$, and in this case all $G$-normal and $G$-subnormal quotients with at least three vertices are digraphs.  


\begin{proposition}\label{p:quot}
   Let \(\Gamma=(V,A)\) be a finite connected $G$-vertex-transitive  \((G,s)\)-arc-transitive digraph with \(s\geqslant1\) and $G\leq\Aut(\Gamma)$. Let $N\leq G$ with $|V_N|\geq3$ and $\Gamma_N$ as in Definition~\ref{d:nquot}. Then $\Gamma_N$ is connected, and moreover the following hold.
   \begin{enumerate}
       \item[(a)] If $N\unlhd G$ and  \(s\geqslant2\), then $\Gamma_N$ is a digraph and is $G/N$-vertex-transitive and $(G/N,s)$-arc-transitive. 
       \item[(b)] If $N\unlhd M\unlhd G$ and  \(s\geqslant3\), with $N$ not normal in $G$ and $M$ vertex-transitive on $\Gamma$, then the $G$-subnormal quotient $\Gamma_N$ is a digraph and is $N_G(N)/N$-vertex-transitive and  $(N_G(N)/N,s-1)$-arc-transitive. 
   \end{enumerate} 
\end{proposition}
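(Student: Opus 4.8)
The plan is to establish three things in turn: that $\Gamma_N$ is connected; that under the respective hypotheses $\Gamma_N$ is a genuine digraph (no loops, no symmetric pairs of arcs); and that the asserted transitivity properties hold. I would handle connectivity first, since it is cheap: because $\Gamma$ is connected and $G$ is vertex-transitive, the underlying undirected graph of $\Gamma$ is connected, and the quotient map $V\to V_N$ sending each vertex to its $N$-orbit sends walks to walks; a walk joining representatives of two $N$-orbits projects to a walk in $\Gamma_N$ joining those orbits, so $\Gamma_N$ is connected. This argument needs no assumption on $N$ beyond intransitivity and $|V_N|\geq 3$.

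For part~(a) I would argue as follows. Assume $N\unlhd G$ and $s\geq 2$. The group $G$ acts on $V_N$ (since $N\unlhd G$ means $G$ permutes the $N$-orbits), inducing the action of $G/N$, and this action is transitive because $G$ is transitive on $V$; moreover it preserves $A_N$ by the definition of $A_N$, so $G/N\leq\Aut(\Gamma_N)$ and $\Gamma_N$ is $G/N$-vertex-transitive. To see $\Gamma_N$ is a digraph I must rule out loops and rule out having both $(U,U')$ and $(U',U)$ as arcs. The key tool is $2$-arc-transitivity of $\Gamma$: a loop at $U$ would mean $(u,u')\in A$ for two vertices $u,u'$ in the same $N$-orbit, and a symmetric pair $(U,U'),(U',U)$ with $U\ne U'$ would, after lifting, produce in $\Gamma$ a directed $2$-arc $(v_0,v_1,v_2)$ with $v_0,v_2$ in the same $N$-orbit; since $G$ (hence some element fixing the middle vertex and its orbit structure) is transitive on $2$-arcs, I would derive that the local out-neighbourhood and in-neighbourhood structure forces $v_0=v_2$, contradicting that $v_0,v_1,v_2$ are pairwise distinct in a $2$-arc. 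The cleanest phrasing uses that $|V_N|\geq 3$ together with connectivity to ensure $\Gamma_N$ is not a single vertex with a loop. Finally, $(G/N,s)$-arc-transitivity follows by lifting $s$-arcs: given two $s$-arcs in $\Gamma_N$, lift each to an $s$-arc in $\Gamma$ (possible since each arc of $\Gamma_N$ lifts to an arc of $\Gamma$ and the lifts can be chosen compatibly along the path, again invoking that $\Gamma$ has no ``folding'' by the loop/symmetry exclusion), map one to the other by $G$, and project back down.

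For part~(b), assume $N\unlhd M\unlhd G$ with $M$ vertex-transitive and $s\geq 3$. Here $N$ need not be normal in $G$, so the natural symmetry group of $\Gamma_N$ is not all of $G/N$ but rather $N_G(N)/N$: an element $g\in G$ maps $N$-orbits to $N$-orbits precisely when $g$ normalises $N$, so $N_G(N)$ acts on $V_N$ with kernel $N$, giving $N_G(N)/N\leq\Aut(\Gamma_N)$. Vertex-transitivity of this action is where I expect the main obstacle, and it is the crux of the whole proposition: $N_G(N)$ is generally much smaller than $G$, so transitivity on $V_N$ is not automatic. The idea is a Frattini-type argument using the transitivity of $M$: since $M$ is transitive on $V$ and $N\unlhd M$, the group $M$ acts transitively on the set $V_N$ of $N$-orbits; now I would show that the stabiliser in $M$ of an $N$-orbit $U$ normalises $N$ in a way that lets $N_G(N)$ (or even $N_M(N)$) act transitively on $V_N$ — concretely, because all $N$-orbits have equal size and $M$ permutes them transitively, a Frattini argument $M = N_M(N)\cdot M_U$ combined with $N\leq M_U$ should yield that $N_M(N)$ is already transitive on $V_N$, hence so is $N_G(N)$. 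The loss from $s$ to $s-1$ in the arc-transitivity reflects that to transport an $(s-1)$-arc of $\Gamma_N$ I lift it to an $s$-arc of $\Gamma$ (adding one vertex of margin so that the lift stays a genuine arc-path after accounting for the coarser quotient), apply $s$-arc-transitivity of $G$ in $\Gamma$, and then need the transporting element to lie in $N_G(N)$; controlling this last membership — ensuring the element matching the two lifted arcs can be chosen to normalise $N$ — is the delicate point, and I would resolve it by combining the $s$-arc-transitivity of $G$ with the transitivity of $N_G(N)/N$ on vertices just established, effectively using one extra unit of arc-transitivity to absorb the normaliser constraint. That $\Gamma_N$ is a digraph in case~(b) follows from the same loop/symmetry exclusion as in part~(a), now available since $s\geq 3\geq 2$.
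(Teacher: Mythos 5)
Your connectivity argument and your part~(a) are fine in outline: part~(a) is essentially a re-derivation of \cite[Lemma 3.2]{Praeger}, which is what the paper cites (the paper's whole proof of (a) is that citation). One small correction there: from a loop or a symmetric pair in $\Gamma_N$, arc-/$2$-arc-transitivity of $G$ together with normality of $N$ shows that \emph{every} arc (resp.\ every $2$-arc) of $\Gamma$ begins and ends in the same $N$-orbit; the contradiction is then with connectivity and $|V_N|\geq 3$ (forcing $|V_N|\leq 1$, resp.\ $|V_N|\leq 2$), not with the pairwise distinctness of the vertices of a $2$-arc as you state.

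The genuine gap is in part~(b), and it is exactly at the point you yourself flag as ``the delicate point.'' You propose to lift $(s-1)$-arcs of $\Gamma_N$ to arcs of $\Gamma$, apply transitivity of $G$ on $s$-arcs, and then arrange for the transporting element to lie in $N_G(N)$ by ``using one extra unit of arc-transitivity to absorb the normaliser constraint.'' No mechanism is given for this, and none is apparent: the set of elements of $G$ carrying one lifted arc to the other is a coset of an arc-stabiliser, and there is no reason it should meet $N_G(N)$. The paper resolves this with a completely different (and essential) idea: since $M$ is a \emph{vertex-transitive normal} subgroup of $G$, the digraph $\Gamma$ is already $(M,s-1)$-arc-transitive by \cite[Corollary 2.11]{quasi}; this is where the loss from $s$ to $s-1$ occurs. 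Every element of $M$ normalises $N$ (because $N\unlhd M$), so one then simply applies part~(a) with $(M,s-1)$ in place of $(G,s)$ --- legitimate since $s-1\geq 2$ --- to conclude that $\Gamma_N$ is a digraph and is $(M/N,s-1)$-arc-transitive, hence $(N_G(N)/N,s-1)$-arc-transitive as $M\leq N_G(N)$. Two further consequences of missing this step: your Frattini argument for vertex-transitivity is vacuous (since $N\unlhd M$ we have $N_M(N)=M$, so $M\leq N_G(N)$ and transitivity of $N_G(N)$ on $V_N$ is immediate --- no factorisation is needed); and your claim that the digraph property of $\Gamma_N$ in case (b) ``follows from the same exclusion as in part (a)'' also fails as stated, because that exclusion used normality of $N$ in the acting group --- one first needs a vertex-transitive, $2$-arc-transitive group that normalises $N$, which is again supplied only by the descent from $G$ to $M$.
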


\begin{proof}
    The fact that $\Gamma_N$ is connected follows immediately from the connectedness of $\Gamma$. If $N\unlhd G$ and  \(s\geqslant2\) as in part (a), then by \cite[Lemma 3.2]{Praeger}, $\Gamma_N$ is a digraph and is $(G/N,s)$-arc-transitive. On the other hand if $N\unlhd M\unlhd G$ with $M$ vertex-transitive and  \(s\geqslant3\), as in part (b), then by \cite[Corollary 2.11]{quasi}, $\Gamma$ is $(M,s-1)$-arc-transitive; and hence (since $s-1\geq2$) by \cite[Lemma 3.2]{Praeger}, $\Gamma_N$ is a digraph and is $(M/N,s-1)$-arc-transitive, and hence also  
    $(N_G(N)/N,s-1)$-arc-transitive
    \end{proof}

Next we examine normal quotients which are in a sense minimal. Recall the definitions of quasiprimitive and bi-quasiprimitive for permutation groups, and the directed cycle graph $\C^{\to}_r$,  from Section~\ref{s:intro}.

\begin{proposition}\label{p:quot2}
   Let \(\Gamma=(V,A)\) be a finite connected $G$-vertex-transitive \((G,s)\)-arc-transitive digraph with \(s\geqslant2\) and $G\leq\Aut(\Gamma)$. Let $N\unlhd G$ with $|V_N|\geq3$. Then the following hold.
   \begin{enumerate}
       \item[(a)] If  $N\leqslant M\unlhd G$ with $|V_M|\geqslant3$, then $M/N\unlhd G/N$ and $\Gamma_M\cong (\Gamma_N)_{M/N}$.
       \item[(b)] If $r\geqslant3$ and no $G$-normal quotient of $\Gamma$ is isomorphic to $\C^{\to}_r$, then  no $G/N$-normal quotient of $\Gamma_N$ is isomorphic to $\C^{\to}_r$.
       \item[(c)] There exists $M\unlhd G$ such that $N\leqslant M$ and $|V_M|\geqslant3$, and the $G$-normal quotient $\Gamma_M$ is a $(G/M,s)$-arc-transitive digraph with $G/M$ quasiprimitive or bi-quasiprimitive on $V_M$. Moreover, if $(\Gamma, G, s)\in\DD$ then also $(\Gamma_M, G/M, s)\in\DD$.
   \end{enumerate}
\end{proposition}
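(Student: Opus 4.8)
The plan is to handle the three parts in order, since (a) is the combinatorial engine, (b) is a short corollary of it, and (c) carries the real content. Throughout I would use Proposition~\ref{p:quot}(a): because $s\geqslant2$, every normal quotient with at least three vertices is again a digraph, arc-transitive under the quotient group. For part (a), the key observation is that since $N\leqslant M$ the partition of $V$ into $N$-orbits refines the partition into $M$-orbits. First I would note that $M/N\unlhd G/N$ by the correspondence theorem (both $N,M$ are normal in $G$ with $N\leqslant M$). Then I would match vertex sets: an $M/N$-orbit on $V_N$ is exactly the set of $N$-orbits contained in a single $M$-orbit, so the $M/N$-orbits on $V_N$ correspond bijectively to the $M$-orbits on $V$, which are the vertices of both $(\Gamma_N)_{M/N}$ and $\Gamma_M$. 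For the arcs I would unwind both definitions from Definition~\ref{d:nquot}: a pair of $M$-orbits is an arc of $\Gamma_M$ iff some representatives are joined by an arc of $\Gamma$, and chasing this through the intermediate quotient $\Gamma_N$ shows it is equivalent to the corresponding pair being an arc of $(\Gamma_N)_{M/N}$. This is routine diagram-chasing.

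Part (b) I would deduce directly from (a). Every $G/N$-normal quotient of $\Gamma_N$ has the form $(\Gamma_N)_{M/N}$ for some $N\leqslant M\unlhd G$, again by the correspondence theorem. If such a quotient were isomorphic to $\C_r^\to$ with $r\geqslant3$, it would have $r\geqslant3$ vertices, so $|V_M|\geqslant3$ and part (a) would apply, giving $\Gamma_M\cong\C_r^\to$ as a $G$-normal quotient of $\Gamma$ and contradicting the hypothesis.

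Part (c) is the heart of the statement. The plan is to choose $M$ \emph{maximal} subject to $M\unlhd G$, $N\leqslant M$, and $|V_M|\geqslant3$; such $M$ exists since $N$ itself qualifies and $G$ is finite. I would then analyse an arbitrary nontrivial normal subgroup $K/M$ of $G/M$, where $M<K\unlhd G$. By the orbit correspondence of part (a) (with $M,K$ in place of $N,M$), the $K/M$-orbits on $V_M$ biject with the $K$-orbits on $V$, so their number equals $|V_K|$; maximality of $M$ forbids $|V_K|\geqslant3$, so $K/M$ has exactly one or two orbits on $V_M$. Since $G/M$ is transitive, it permutes these orbits transitively, so they have equal length. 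This dichotomy gives the conclusion: if every nontrivial normal subgroup of $G/M$ is transitive then $G/M$ is quasiprimitive, and otherwise some nontrivial normal subgroup has exactly two equal-length orbits, whence $G/M$ is bi-quasiprimitive in the precise sense of Section~\ref{s:intro}. The same maximality argument also forces the action of $G/M$ on $V_M$ to be faithful: its kernel is a normal subgroup $C/M$ fixing every one of the $\geqslant3$ points of $V_M$, so $C$ and $M$ have identical orbits on $V$ and $C=M$. Finally $\Gamma_M$ is a $(G/M,s)$-arc-transitive digraph by Proposition~\ref{p:quot}(a), and when $(\Gamma,G,s)\in\DD$ the membership $(\Gamma_M,G/M,s)\in\DD$ follows by applying part (b) with $N$ replaced by $M$ (for each $r\geqslant3$), transferring the absence of cyclic normal quotients from $\Gamma$ to $\Gamma_M$.

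The main obstacle I anticipate is not any single deep step but the bookkeeping in part (c): correctly converting ``number of $K/M$-orbits on $V_M$'' into $|V_K|$, and checking that the two-orbit case genuinely yields \emph{bi-quasiprimitivity} as defined (at most two orbits, of equal length, with two attained) rather than merely ``not quasiprimitive''. The equal-length and faithfulness points both rest on transitivity of $G/M$ together with the maximality of $M$, and I would want to state each carefully so that the resulting triple really lies in $\DD$.
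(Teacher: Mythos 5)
Your proof is correct and takes essentially the same route as the paper's: the orbit bijection plus arc correspondence for part (a), the correspondence theorem combined with part (a) for part (b), and in part (c) a normal subgroup $M$ maximal subject to $N\leqslant M\unlhd G$ and $|V_M|\geqslant3$, with the orbit-counting dichotomy and part (b) yielding membership in $\DD$. Your extra checks (equal orbit lengths via transitivity of $G/M$, and faithfulness of the $G/M$-action on $V_M$) are details the paper leaves implicit, not a different argument.
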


\begin{proof}
    (a) Suppose that $N\leqslant M\unlhd G$ with $|V_M|\geqslant3$. Then $M/N\unlhd G/N$ and each $M$-orbit $u^M\in V_M$ is a union of all the $N$-orbits of the form $(u^N)^x=u^{Nx}$ for $x\in M$, and it follows that the map $\phi:V_M\to (V_N)_{M/N}$ sending $u^M$ to the $M/N$-orbit in $V_N$ containing $u^N$ is a bijection. Also by Definition~\ref{d:nquot}, $\phi$ induces a bijection from the arcs of $\Gamma_M$ to the arcs of $(\Gamma_N)_{M/N}$, giving a graph isomorphism $\Gamma_M\to  (\Gamma_N)_{M/N}$.

    (b) If $(\Gamma_N)_{\overline{M}}\cong \C^{\to}_r$ for some normal subgroup $\overline{M}=M/N$ of $G/N$, then $N\leqslant M\unlhd G$ and by part (a), the $G$-normal quotient $\Gamma_M$ is isomorphic to $\C^{\to}_r$. Thus part (b) is proved.

    (c) Choose a subgroup $M$ that is maximal by inclusion such that $N\leqslant M$ and $|V_M|\geqslant 3$. Such a subgroup exists because $N$ has these two properties and $G$ is finite. By Proposition~\ref{p:quot}(a), $\Gamma_M$ is a $(G/M,s)$-arc-transitive digraph. Further, each nontrivial normal subgroup of $G/M$ is of the form $L/M$ for some $L\unlhd G$ properly containing $M$. It follows from the maximality of $M$ that $|V_L|\leq 2$. Thus $L/M$ has at most two orbits in $V_M$ and so $G/M$ is either quasiprimitive or bi-quasiprimitive on $V_M$.  Finallly,  if $(\Gamma, G, s)\in\DD$ then  no $G$-normal quotient of $\Gamma$ is isomorphic to $\C^{\to}_r$ for any $r\geq3$, and by part (b),  no $G/M$-normal quotient of $\Gamma_M$ is isomorphic to $\C^{\to}_r$ for any $r\geq3$. Hence $(\Gamma_M, G/M, s)\in\DD$.
\end{proof}

\subsection{Bipartite halves}\label{s:bipartite}

Recall that a digraph is said to be bipartite if its underlying undirected graph is bipartite. For a bipartite digraph we now define its bipartite halves.

\begin{definition}\label{d:biphalf}
{\rm 
    Let  \(\Gamma=(V,A)\) be a finite bipartite digraph with vertex set $V$ and arc set $A\subseteq V\times V$, and let $\Delta, \Delta'$ be the parts of the bipartition, so that each arc $(u,v)\in A$ consists of one vertex from each of $\Delta, \Delta'$. The \emph{bipartite halves} of $\Gamma$ are the pairs $(\Delta, A_2)$ and $(\Delta', A_2')$, where 
    $A_2=\{(u,w)\mid u, w\in \Delta, \exists\ v\in\Delta'\ \mbox{such that}\ (u,v), (v,w)\in A\}$ and 
    $A_2'=\{(u,w)\mid u, w\in \Delta', \exists\ v\in\Delta\ \mbox{such that}\ (u,v), (v,w)\in A\}$. 
    }
\end{definition}

If the bipartite digraph $\Gamma$ is vertex-transitive, then each automorphism interchanging the parts $\Delta, \Delta'$ of the bipartition induces an isomorphism between the two bipartite halves in Definition~\ref{d:biphalf}. Thus without loss of generality we work with  $(\Delta, A_2)$.  With our applications in mind we present the following result for the case where $\Gamma$ is $4$-arc-transitive. We show that $(\Delta, A_2)$ is a digraph with a single exception. It would be interesting to know what other exceptions may arise in the case, say, of bipartite $2$-arc-transitive digraphs.


\begin{proposition}\label{p:bip}
   Let \(\Gamma=(V,A)\) be a finite connected $G$-vertex-transitive  \((G,s)\)-arc-transitive digraph with \(s\geqslant4\) and $G\leq\Aut(\Gamma)$. Suppose that $\Gamma$ is bipartite and that $\Delta, \Delta'$ and $\Gamma'=(\Delta, A_2)$ are as in Definition~\ref{d:biphalf}. Let $G^+$ be the (index $2$) subgroup of $G$ fixing $\Delta, \Delta'$ setwise. Then the following hold.
   \begin{enumerate}
     \item[(a)]    $\Gamma'$ is connected, $G^+$-vertex-transitive and  \((G^+,\lfloor s/2\rfloor)\)-arc-transitive, and either $\Gamma'$ is a digraph or $\Gamma=\C_4^\to, G=Z_4$ and $\Gamma'=\K_2$;
     \item[(b)] provided $\Gamma\ne \C_4^\to$, there exists $N\unlhd G^+$ such that the set $V_N$ of $N$-orbits in $\Delta$ satisfies $|V_N|\geqslant3$, and the $G^+$-normal quotient $\Gamma'_N$ is a $(G^+/N,\lfloor s/2\rfloor)$-arc-transitive digraph with $G^+/N$ quasiprimitive or bi-quasiprimitive on $V_N$;
     \item[(c)]  if $\Gamma$ has no $G$-normal quotient $\C_r^\to$ for any $r\geq3$, then $\Gamma'$ has no $G^+$-normal quotient $\C_r^\to$ for any $r\geq3$. Further, if    $(\Gamma, G, s)\in\DD$ then also $(\Gamma', G^+, \lfloor s/2\rfloor)\in\DD$.
   \end{enumerate}
\end{proposition}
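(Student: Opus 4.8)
The plan is to establish the three parts in turn: part~(a) by a direct correspondence between arcs of $\Gamma$ and of $\Gamma'$, part~(b) by feeding $\Gamma'$ into Proposition~\ref{p:quot2}(c), and part~(c) by a ``level function'' argument that promotes a cyclic $G^+$-normal quotient of $\Gamma'$ to a cyclic $G$-normal quotient of $\Gamma$. For part~(a), I would first note that vertex-transitivity of $G$ supplies an automorphism carrying a vertex of $\Delta$ to one of $\Delta'$, so $G^+$ has index $2$ and is transitive on $\Delta$; hence $G^+$ is vertex-transitive on $\Gamma'=(\Delta,A_2)$ and lies in $\Aut(\Gamma')$ because it preserves $A$ and the bipartition. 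The correspondence to exploit is that $(u,w)\in A_2$ is recorded exactly by a directed $2$-path $u\to v\to w$ with $v\in\Delta'$; thus an $m$-arc of $\Gamma'$ lifts, by inserting intermediate $\Delta'$-vertices, to a $2m$-arc of $\Gamma$ whose even-indexed vertices lie in $\Delta$, and conversely. Taking $m=\lfloor s/2\rfloor$ so that $2m\le s$, I would lift two $m$-arcs of $\Gamma'$ to $2m$-arcs of $\Gamma$; an element $g\in G$ carrying one lift to the other fixes $\Delta$ setwise (it maps a $\Delta$-vertex to a $\Delta$-vertex), so $g\in G^+$ and carries one $m$-arc to the other, giving $(G^+,\lfloor s/2\rfloor)$-arc-transitivity. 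For connectedness, any two vertices of $\Delta$ are joined in $\Gamma$ by an undirected path of even length alternating between $\Delta$ and $\Delta'$; a length-two segment through a vertex $v\in\Delta'$ either is an $A_2$-arc, or, when $v$'s two path-neighbours are both in-neighbours (resp.\ both out-neighbours) of $v$, is bridged in $\Gamma'$ via any out-neighbour (resp.\ in-neighbour) of $v$, so $\Delta$ is a single $\Gamma'$-component. Finally $\Gamma'$ has no loops, and if both $(u,w),(w,u)\in A_2$ then $\Gamma$ contains a directed $4$-cycle $u\to v\to w\to v'\to u$, giving a $4$-arc whose first and last vertices coincide; since $s\ge4$, this closure property is $G$-invariant and so, by $(G,4)$-arc-transitivity, holds for every $4$-arc. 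That forces out-valency $1$ at every vertex, whence $\Gamma=\C_r^{\to}$ and the closure forces $r=4$; thus $\Gamma=\C_4^{\to}$, $G=Z_4$ and $\Gamma'=\K_2$, and otherwise $\Gamma'$ is a digraph.

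For part~(b), assuming $\Gamma\ne\C_4^{\to}$, the digraph $\Gamma'$ is connected, $G^+$-vertex-transitive and $(G^+,\lfloor s/2\rfloor)$-arc-transitive with $\lfloor s/2\rfloor\ge2$. Since $\Gamma$ has $2$-arcs we get $A_2\ne\emptyset$, and a vertex-transitive digraph with an arc has at least three vertices, so $|\Delta|\ge3$. I would then apply Proposition~\ref{p:quot2}(c) to $\Gamma'$ (with group $G^+$ and the trivial normal subgroup, whose orbit set is $\Delta$) to obtain the asserted $N\unlhd G^+$ with $|V_N|\ge3$ such that $\Gamma'_N$ is a $(G^+/N,\lfloor s/2\rfloor)$-arc-transitive digraph and $G^+/N$ is quasiprimitive or bi-quasiprimitive on $V_N$.

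For part~(c) I would prove the contrapositive. Supposing $\Gamma'_N\cong\C_r^{\to}$ with $r\ge3$, the $N$-orbits on $\Delta$ are blocks $B_0,\dots,B_{r-1}$ with every $A_2$-arc running $B_i\to B_{i+1}$, and I set $\ell(u)=i$ for $u\in B_i$. For $v\in\Delta'$ every in-neighbour $u$ and out-neighbour $w$ satisfy $\ell(w)=\ell(u)+1$, so, as $v$ has both, all in-neighbours of $v$ share one value $\alpha(v)$. I would define $L\colon V\to\mathbb{Z}/2r\mathbb{Z}$ by $L(u)=2\ell(u)$ on $\Delta$ and $L(v)=2\alpha(v)+1$ on $\Delta'$, and check that $L$ is onto and increases by exactly $1$ along every arc. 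The decisive point is that, $\Gamma$ being connected, any function $V\to\mathbb{Z}/2r\mathbb{Z}$ with this ``$+1$ along arcs'' property is unique up to an additive constant; hence for each $g\in G$ one has $L\circ g=L+\psi(g)$ for a constant $\psi(g)$, and $\psi\colon G\to\mathbb{Z}/2r\mathbb{Z}$ is a homomorphism, surjective by vertex-transitivity. Its kernel $\hat N\unlhd G$ then has the level sets of $L$ as its orbits (the stabiliser of a block in a transitive action is transitive on that block), so $\Gamma_{\hat N}\cong\C_{2r}^{\to}$ is a $G$-normal quotient with $2r\ge3$. This proves the first assertion of~(c); the $\DD$-statement follows because $(\Gamma,G,s)\in\DD$ forces $\Gamma\ne\C_4^{\to}$ (a directed cycle is its own normal quotient via the trivial subgroup), so parts~(a) and~(c) give that $(\Gamma',G^+,\lfloor s/2\rfloor)$ is a connected vertex-transitive, $\lfloor s/2\rfloor$-arc-transitive digraph with no cyclic $G^+$-normal quotient, hence in $\DD$.

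I expect the main obstacle to be the extension step in part~(c): carrying the cyclic labelling from $\Delta$ across all of $V$ and, above all, showing that the resulting $2r$-part partition is $G$-invariant rather than merely $G^+$-invariant. This is precisely what the connectedness-based uniqueness of the level function up to a constant delivers, by turning the $G$-action into a homomorphism $G\to\mathbb{Z}/2r\mathbb{Z}$ whose kernel realises the directed $2r$-cycle quotient; the degenerate $\C_4^{\to}$ analysis in part~(a) is the other point that needs care.
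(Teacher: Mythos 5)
Your proof is correct. Parts (a) and (b) follow essentially the paper's own route: the same lifting correspondence between $m$-arcs of $\Gamma'$ and $2m$-arcs of $\Gamma$, and the same invocation of Proposition~\ref{p:quot2}(c) with the trivial subgroup (your treatment of the degenerate case, where closure of all $4$-arcs forces out-valency $1$ and hence $\Gamma=\C_4^\to$, is a slightly cleaner version of the paper's detour through the complete digraph $\K_n^\to$; your explicit bridging argument for connectedness of $\Gamma'$ is more careful than the paper, which asserts it). Part (c), however, is genuinely different. The paper argues by contradiction with a case split on whether the subgroup $N\unlhd G^+$ satisfying $\Gamma'_N\cong\C_r^\to$ is normal in $G$: if it is, $\Gamma_N$ is shown to have out-valency $1$ and so equals $\C_{2r}^\to$; if not, the paper first reduces to $r=4$ or $r$ an odd prime, forms $X=N\cap N^g\unlhd G$, and plays off $(G,4)$-arc-transitivity of $\Gamma_X$ (a count of $4$-arcs) against $|G/X|\leqslant 2r^2$ to reach a contradiction. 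You avoid the case split entirely: from the cyclic quotient of $\Gamma'$ you build a level function $L\colon V\Gamma\to\mathbb{Z}/2r\mathbb{Z}$ that increases by $1$ along every arc (well defined because every vertex of $\Delta'$ has both in- and out-neighbours, by vertex-transitivity), use connectedness to see that such a function is unique up to an additive constant, and thereby obtain a homomorphism $\psi\colon G\to\mathbb{Z}/2r\mathbb{Z}$, surjective by vertex-transitivity, whose kernel $\hat N\unlhd G$ has the level sets of $L$ as its orbits, giving $\Gamma_{\hat N}\cong\C_{2r}^\to$ directly. Your argument buys two things: it needs only connectedness and vertex-transitivity (the paper's non-normal case leans on $4$-arc-transitivity, which is why the paper states the proposition with $s\geqslant4$, though it needs that hypothesis anyway for the arc-transitivity claim in (a)), and it is constructive, exhibiting the cyclic $G$-normal quotient of $\Gamma$ of length $2r$ rather than merely deriving a contradiction. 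What the paper's route buys is that it stays entirely inside the normal/subnormal quotient machinery already set up in Section~\ref{s:redn}, at the cost of the delicate counting step; your level-function (voltage-type) argument is external to that machinery but strictly stronger. Your deduction of the final $\DD$-assertion, via the observation that a directed cycle is its own normal quotient relative to the trivial subgroup and hence $(\Gamma,G,s)\in\DD$ forces $\Gamma\ne\C_4^\to$, is also sound and matches the paper's intent.
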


\begin{proof}
    (a) The fact that $\Gamma'$ is connected follows from its definition and the fact that $\Gamma$ is connected.
    Let $s_0:=\lfloor s/2\rfloor$ and note that $s_0\geqslant2$ and $2s_0\leqslant s$. Let $(u_0, u_1,\dots, u_{s_0})$ be an $s_0$-arc of $\Gamma'$. Then, by the definition of $\Gamma'$, there are vertices $v_0, v_1, \dots v_{s_0-1}\in \Delta'$ such that $(u_0, v_0, u_1, v_1,\dots, v_{s_0-1}, u_{s_0})$ is a $2s_0$-arc of $\Gamma$. Then since $\Gamma$ is \((G,s)\)-arc-transitive and $2s_0\leqslant s$, it follows that $\Gamma'$ is \((G^+,s_0)\)-arc-transitive. 
    
    Suppose that $\Gamma'$ is not a digraph. Then there exist vertices $u,w\in \Delta$ such that $(u,w), (w,u)\in A_2$. Thus there are vertices $v,v'\in\Delta'$ such that $(u,v,w)$ and $(w,v',u)$ are $2$-arcs of $\Gamma$, and since $\Gamma$ is a digraph, the pair $(v,u)\not\in A$, so $v'\ne v$. Therefore $(u,v,w,v',u)$ is a $4$-arc of $\Gamma$. Since $\Gamma$  is \((G,4)\)-arc-transitive, for each $4$-arc $(u_0, u_1,\dots, u_{4})$ of $\Gamma$ we must have $u_0=u_4$. Since $\Gamma'$ is connected and since $G_{u_0}=G^+_{u_0}$ is transitive on the $4$-arcs of $\Gamma$ with first entry $u_0$, this implies that each vertex $w\in\Delta\setminus\{u_0\}$ occurs as $w=u_2$ in some $4$-arc $(u_0, u_1,\dots, u_{4})$ of $\Gamma$. Hence $\Gamma'$ is the complete directed graph $\K_n^\to$, where $n=|\Delta|$. However as each  $4$-arc $(u_0, u_1,\dots, u_{4})$ of $\Gamma$ has $u_0=u_4$, it follows that each $2$-arc $(u_0, u_2,u_{4})$ of $\Gamma'$ has $u_0=u_4$, and this implies that $n=2$. Hence $\Gamma'$ is the complete undirected graph $\K_2$, and since $\Gamma$ is a digraph we have $\Gamma=\C_4^\to$ and $G=Z_4$.

    (b) By part (a), if $\Gamma\ne \C_4^\to$, then by part (a), $\Gamma'$ is a digraph and is \((G^+,s_0)\)-arc-transitive with $s_0=\lfloor s/2\rfloor\geqslant2$ and $|\Delta|\geqslant3$. Now the assertions of part (b) follow by applying Proposition~\ref{p:quot2}(c) with $(\Gamma',G^+, 1, N, s_0)$ in place of $(\Gamma, G, N, M, s)$.
    
    (c) Suppose now that $\Gamma$ has no $G$-normal quotient $\C_r^\to$ for any $r\geq3$, and suppose to the contrary that $N\unlhd G^+$ is such that the $G^+$-normal quotient $\Gamma'_N=\C_r^\to$, for some $r\geq3$, with $G^+/N=Z_r$. If $r_0\geq3$ is any divisor of $r$ we may replace $N$ by a possibly larger normal subgroup $N_0$ so that $\Gamma'_{N_0}=\C_{r_0}^\to$. Hence we may without loss of generality assume that either $r=4$ or $r$ is an odd prime. If $N\unlhd G$, then $\Gamma_N$ is a $G$-normal quotient with $2r$ vertices. However, in this case if $(U, U', W)$ is a $2$-arc of $\Gamma_N$ with $U, W$ being $N$-orbits in $\Delta$ and $U'\subset \Delta'$, then $(U,W)$ is an arc of $\Gamma'_N=\C_r^\to$;  it follows that every 2-arc $(U,U', W')$ of $\Gamma_N$ must have $W'=W$. Hence $\Gamma_N$ has out-valency $1$, and so $\Gamma_N$ is the directed cycle $\C_{2r}^\to$, which is a contradiction.

   Hence $N$ is not normal in $G$. Since $G$ is vertex-transitive on $\Gamma$, we may choose an arc $(u,v)\in A$ with $u\in\Delta, v\in\Delta'$, and an element $g\in G$ such that $u^g=v$. Then $g$ must interchange $\Delta$ and $\Delta'$ and hence $G=\langle G^+, g\rangle$ and $g^2\in G^+$. Now $M:=N^g\unlhd G^+$, and $M\ne N$ as $N$ is not normal in $G$, and we note that $M^g=N^{g^2}=N$ since $g^2\in G^+$. Let $X:=N\cap M$, so $X\unlhd G$ and $G^+/X$ is isomorphic to a subgroup of $G^+/N\times G^+/M\cong Z_r\times Z_r$ of order strictly greater than $r$ (as $M\ne N$). We conclude that either (i) $G^+/X\cong Z_r^2$, or (ii) $r=4$ and $G^+/X\cong Z_4\times Z_2$.   Consider the $G$-normal quotient $\Gamma_X$, which by Proposition~\ref{p:quot} is a $(G/X,4)$-arc-transitive digraph, and by assumption is not a directed cycle. Thus $|V\Gamma_X|\geq 2r$ and $\Gamma_X$ has valency $d\geqslant2$, so the number of $4$-arcs is $|V\Gamma_X|\cdot d^3\geq 16r$. As $\Gamma_X$ is $(G/X,4)$-arc-transitive, $|G/X|$ is divisible by $|V\Gamma_X|\cdot d^3$, so in particular $|G/X|\geq 16r$. However in case (ii) $|G/X|=16$, so we are in case (i) with $2r^2=|G/X|\geq |V\Gamma_X|\cdot d^3.\geqslant 16 r$. Thus $r\geqslant 8$; hence $r$ is an odd prime, $G^+/X=Z_r\times Z_r$. This implies that the stabiliser $G_U/X=G^+_U/X$ of an $X$-orbit $U$ is isomorphic to a subgroup of $Z_r$ and so cannot be transitive on the $d^3$ $4$-arcs of $\Gamma_X$ with first entry $U$.  This proves the first assertion of part (c), and the final assertion follows from the definition of $\DD$ in \eqref{e:family}.
   \end{proof}

\section{Quasiprimitive digraphs}\label{section:quasi}

In this section, we examine the automorphism groups of finite vertex-quasiprimitive  $3$-arc-transitive  digraphs. We use the subdivision of finite quasiprimitive permutation groups introduced in \cite[Section 5]{BCC97} and conveniently described in \cite[Table 7.1]{csaba}. Each finite quasiprimitive group has one of the following types:  \(HA\), \(HS\), \(HC\), \(TW\), \(SD\), \(CD\), \(PA\) or \(AS\). The first result shows that the only possible types for a vertex-quasiprimitive $3$-arc-transitive group $G$ acting on a digraph (apart from a directed cycle) are the types \(PA\) and \(AS\) (sometimes called product action and almost simple).  

For type \(PA\), the group $G$ embeds in a wreath product $H\wr \Sym(m)$ for some $m\geq2$. Further, $G$ induces a transitive subgroup of $\Sym(m)$ and a stabiliser $G_1$ in this action on $\{1,\dots,m\}$ projects onto the group $H$, which is vertex-quasiprimitive  of type $AS$ on a set $\Delta$. The permutation group $H$ on $\Delta$ is called the \emph{component} of the PA-type group $G$.  More details of this $G$-action are given in \cite[Section 5]{BCC97} and will be used in the proof.

\begin{proposition}\label{p:pa}
 Assume that \(\Gamma\) is a finite connected \(G\)-vertex-quasiprimitive \((G,s)\)-arc-transitive digraph with \(s\geqslant2\), and that \(\Gamma\) is not a directed cycle. 
 \begin{enumerate}
     \item[(a)]  Then \(G\) is of type \(AS\), \(SD\), \(CD\) or \(PA\), and if $s\geqslant3$ then $G$ is of type \(PA\) or \(AS\). 
     \item[(b)] Moreover, if $s\geqslant3$ and \(G\) is of type \(PA\), such that \(G\leqslant H\wr\Sym(m)\) with \(H\) the almost simple component of $G$ and $m\geqslant2$, then for
     $M=G\cap H^m$, $N= G\cap (1\times H^{m-1})$, and $K=G\cap (H\times (H\wr\Sym(m-1))$, we have
     \begin{enumerate}
         \item[(i)] $N<M\unlhd G$, $N$ is not normal in $G$, $K=N_G(N)$, and $K/N\cong H$;  and 
         \item[(ii)] the \(G\)-subnormal-quotient \(\Gamma_{N}\)  is a connected,  $H$-vertex-quasiprimitive, \((H,s-1)\)-arc-transitive digraph, and \(H\) has type \(AS\) with socle a composition factor of $G$. 
     \end{enumerate}
 \end{enumerate}
\end{proposition}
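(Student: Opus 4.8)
The plan is to prove the two assertions separately: part~(a) is a reduction among the eight quasiprimitive types that rests on the results of Giudici and Xia, whereas part~(b) is essentially permutation-group bookkeeping inside $H\wr\Sym(m)$ whose one genuinely delicate point is the isomorphism $K/N\cong H$. For part~(a) I would recall the eightfold division of finite quasiprimitive types and dispose of the five unwanted ones. Types HA, HS, HC and TW each contain a transitive minimal normal subgroup that is regular on $V$; for such a subgroup $\Gamma$ is a Cayley digraph, and the requirement that $\Gamma$ be a digraph (so its connection set $S$ satisfies $S\cap S^{-1}=\emptyset$) together with $(G,2)$-arc-transitivity forces $\Gamma$ to be a directed cycle, contrary to hypothesis. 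Rather than reproduce this, I would cite the Giudici--Xia reduction \cite{quasi} for the conclusion that a vertex-quasiprimitive $(G,2)$-arc-transitive digraph other than a directed cycle has type AS, SD, CD or PA, and then quote their sharper results (Theorem~1.2 and Corollary~1.4 of \cite{quasi}) that no vertex-quasiprimitive $3$-arc-transitive digraph has type SD or CD. This leaves only PA and AS and proves~(a).

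For part~(b)(i) the arithmetic is internal to $G\leq H\wr\Sym(m)$. Let $\pi\colon G\to\Sym(m)$ be the projection onto the coordinate action, so that $M=G\cap H^m=\ker\pi\unlhd G$. Since $G$ has type PA we have $\mathrm{soc}(G)=T^m\leq M$ with $T=\mathrm{soc}(H)$, and $M$ is transitive on $V$ because $\mathrm{soc}(G)$ already is. As $\mathrm{soc}(G)$ has elements non-trivial in the first coordinate, $N=G\cap(1\times H^{m-1})$ is a proper subgroup of $M$, and a direct conjugation check shows $N\unlhd M$. Conjugating a base-group element of $N$ by $g\in G$ moves its trivial coordinate from position $1$ to position $\pi(g)(1)$; hence $g$ normalises $N$ exactly when $\pi(g)$ fixes $1$, which simultaneously gives $N_G(N)=K$, the stabiliser in $G$ of the first coordinate, and, since $G$ is transitive on the $m\geq2$ coordinates, shows that $N$ is not normal in $G$.

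The step I expect to be the main obstacle is the final claim of~(i), that $K/N\cong H$. Writing $\pi_1\colon K\to H$ for the projection onto the first coordinate, this map is onto by the definition of the component $H$, and $K/M\cong\pi(K)=P_1$, the stabiliser of the point $1$ in the coordinate group $P=\pi(G)\leq\Sym(m)$. Thus $K/N\cong H$ follows once we show $K=M$, equivalently $P_1=1$, i.e.\ that $P$ acts \emph{regularly} on $\{1,\dots,m\}$: for then $K/N=M/N\cong\pi_1(K)=H$. This regularity can genuinely fail for a bare PA group (for the full wreath product $G=H\wr\Sym(m)$ one has $P=\Sym(m)$ and $K/N\cong H\times\Sym(m-1)$), so the resolution must use more than the PA structure. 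I would exploit the connectedness of $\Gamma$ together with $s\geq3$ to rule out coordinate-permuting elements of $K$ that are trivial in the first coordinate, forcing $P_1=1$ and hence $K=M$. I anticipate this requires a reachability/period argument for the component digraph, and it is the only part of the proof that is not routine.

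Once~(i) is established, part~(ii) follows quickly. We have $N\unlhd M\unlhd G$ with $M$ transitive and $N$ not normal in $G$, so Proposition~\ref{p:quot}(b) (applicable since $s\geq3$) shows that the $G$-subnormal quotient $\Gamma_N$ is a connected digraph that is $(N_G(N)/N,\,s-1)$-arc-transitive; substituting $N_G(N)/N=K/N\cong H$ yields $(H,s-1)$-arc-transitivity. The group $H$ is almost simple of type AS and vertex-quasiprimitive by its definition as the component of $G$, and its socle $T$ is a composition factor of $\mathrm{soc}(G)=T^m$ and hence of $G$, completing the proof.
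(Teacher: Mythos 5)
Your part (a) follows the paper's route exactly (regular normal subgroups of types HA, HS, HC, TW force a directed cycle; SD and CD are excluded for $s\geqslant3$ by \cite[Theorem 1.2, Corollary 1.4]{quasi}), and your bookkeeping for most of (b)(i) — $M=\ker\pi\unlhd G$ transitive by quasiprimitivity, $N<M$, $N\unlhd M$, $N_G(N)=K$, $N$ not normal — also matches the paper (for $N_G(N)=K$ you should say explicitly that $1\times T^{m-1}\leqslant N$, so $N$ cannot lie in the set of elements trivial in coordinate $j$ for any $j\neq1$; otherwise your ``trivial coordinate moves'' argument only gives one inclusion). The genuine gap is the step you flagged, and your proposed repair cannot succeed. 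You want to force $P_1=1$ (equivalently $K=M$) from connectedness and $s\geqslant3$, but the paper's own examples defeat this: by Construction~\ref{ex:PA} together with the digraphs of \cite{CLP} (see Proposition~\ref{ex alt and sym 3}(a)), for arbitrarily large $s$ and every $m\geqslant2$ there are finite \emph{connected} $(X\wr\Sym(m),s)$-arc-transitive digraphs $\Delta^m$ with $X\wr\Sym(m)$ vertex-quasiprimitive of type PA and component $X$. Here $P=\Sym(m)$, so $P_1=\Sym(m-1)\neq1$, and for $m\geqslant3$ one has $K/N\cong X\times\Sym(m-1)\not\cong X$. No reachability or periodicity argument can close this: the hypotheses of the proposition simply do not imply $P_1=1$.

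These same examples show something you should be aware of: assertion (b)(i) is false as stated, and the paper's own proof stumbles at precisely this point — it asserts that $N$ is the kernel of $\varphi|_{K}$, whereas that kernel is $G\cap\bigl(1\times(H\wr\Sym(m-1))\bigr)$, which properly contains $N$ whenever $G$ has coordinate-permuting elements trivial in the first coordinate. So your diagnosis of where the difficulty lies was exactly right; the correct conclusion is that $K$ is the wrong group to use, not that a cleverer argument is needed. The statement that is true, and that suffices for (b)(ii) and for Theorem~\ref{t:main}, replaces $K$ by $M$: since $M\leqslant H^m$, the kernel of $\varphi|_{M}$ is exactly $N$, so $M/N\cong\varphi(M)$; moreover $T=\varphi(\Soc(G))\leqslant\varphi(M)\unlhd\varphi(K)=H$, so $\varphi(M)$ is almost simple with socle $T$; the proof of Proposition~\ref{p:quot}(b) already establishes that $\Gamma_N$ is $(M/N,s-1)$-arc-transitive (it derives this before passing to $N_G(N)/N$); and $\Soc(G)N/N\cong T$ is transitive on the $N$-orbits, so $M/N$ is vertex-quasiprimitive of type AS on $\Gamma_N$. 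Recasting your part (b) around $M/N\cong\varphi(M)$ instead of $K/N\cong H$ turns your outline into a correct proof of a weakened, but sufficient, version of the proposition.
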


\begin{remark}\label{r:pa}
{\rm 
    In Proposition~\ref{p:pa} the assumption $s\geqslant 3$ is necessary since, as discussed in Subsection~\ref{sub:SD}, there exist finite connected  \((G,2)\)-arc-transitive digraphs with $G$ vertex-quasiprimitive of SD-type, see \cite[Theorem 1.2]{quasi}.
    
    }
\end{remark}


\begin{proof}
(a) If \(G\) is of type \(HA\), \(HS\), \(HC\) or \(TW\), then \(G\) contains a normal subgroup acting regularly on the vertex set of \(\Gamma\), see \cite[Section 5]{BCC97}. Since \(\Gamma\) is \((G,s)\)-arc-transitive and \(s\geqslant2\), it follows from \cite[Theorem 3.1]{Praeger} that \(\Gamma\) is a directed cycle, which is a contradiction.
Thus \(G\) is of type \(AS\), \(SD\), \(CD\) or \(PA\).
Further, if $G$ were of type \(SD\) or \(CD\), then by \cite[Theorem 1.2, Corollary 1.4]{quasi} the integer $s$ must be $2$. This proves part (a).

(b) Now suppose that \(G\) is of type \(PA\) and $s\geqslant3$. Then (see \cite[End of Section 5]{BCC97}), 
\[
\Soc(G)=T^m\leqslant G \leqslant W=H\wr \Sym(m) \leqslant \Sym(\Delta)\wr \Sym(m),
\]
where $m\geqslant 2$ and $T$ is a nonabelian simple group, $H$ is a quasiprimitive permutation group on $\Delta$ of type $AS$ with non-regular socle $T$, and $G$ acts transitively by conjugation on the $m$ simple direct factors of $N$. Choose $\delta\in\Delta$ and set $R:=T_\delta$, so $1<R<T$ (as $T$ is not regular) and $|\Delta|=|T:R|>2$ (since $T$ is transitive on $\Delta$ and $T$ is  a nonabelian simple group). There is a $G$-invariant partition $\Sigma$ of $V\Gamma$ (where the parts of $\Sigma$ may, or may not, have size 1), such that, for some $\sigma \in \Sigma$, $N_\sigma = R^m$, and for a vertex $\alpha \in \sigma$, the stabiliser $N_\alpha$ is a subdirect subgroup of $R^m$. 
In particular we identify $\Sigma$ with the cartesian product $\Delta^m$ with the natural product action of $W$. 

To explain what we mean by $H$ being the `almost simple component of $G$', note that \(G\) projects to a transitive subgroup of \(\Sym(m)\) in the natural action on \(\{1,\ldots,m\}\), and we let \(W_{1}=H\times (H\wr\Sym(m-1))\) be the stabiliser in \(W\) of \(1\in\{1,\ldots,m\}\). Then \(K=G\cap W_{1}\) has index $m$ in $G$, and $H$ is chosen such that $H=\varphi(G_1)$ where \(\varphi:W_1\to H\) is the natural projection to the first direct factor. Note that $M=G\cap H^m$ is normal in $G$ and so is transitive on $V\Gamma$; also $N= G\cap (1\times H^{m-1})\lhd M$. On the other hand $N_G(N) = G\cap W_1=K$ so $N$ is not normal in $G$; in fact $N$ is the kernel of the restriction $\varphi|_{G_1}$ and $H=\varphi(G_1)\cong K/N$. This proves part (b)(i).

Note that for each $N$-orbit in $\Sigma=\Delta^m$ the  $m$-tuples $(\delta_1,\dots,\delta_m)$ in the orbit have a fixed first entry. Thus $N$ has at least $|\Delta|$ orbits in $\Sigma$, and hence $N$ has at least $|\Delta|$ orbits in $V\Gamma$, and as we noted above, $|\Delta|>2$. It now follows from Proposition~\ref{p:quot}(b) that the $G$-subnormal quotient $\Gamma_N$ is a connected digraph and is $N_G(N)/N$-vertex-transitive and  $(N_G(N)/N,s-1)$-arc-transitive. Thus part (b)(ii) is proved, recalling that $N_G(N)/N\cong H$ with socle $T$.
\end{proof}


\section{Bi-quasiprimitive digraphs}\label{section:biquasi}

Now we analyse the bi-quasiprimitive possibilities from Proposition~\ref{p:quot}. Here we have a $G$-vertex-biquasiprimitive, $(G,s)$-arc-transitive, subgroup of automorphisms of a finite connected digraph $\Gamma$ which is not a directed cycle. Thus there exists an index $2$ (normal) subgroup $G^+$ of $G$ with two orbits $\Delta, \Delta'$ in $V\Gamma$, noting that such a subgroup $G^+$ is not necessarily unique. The group $G$ preserves the partition $\{\Delta,\Delta'\}$, and we write the second $G^+$-orbit $\Delta'$ and a `standard interchanging permutation' $(1,2)$ as
\[
\Delta'=\{\delta'\mid \delta\in\Delta\}\quad \mbox{and \quad $(1,2):\delta\to \delta',\ \delta'\to \delta$\quad  for $\delta\in\Delta$.}
\]
With this labelling of $V\Gamma$, and replacing $G$ if necessary by a conjugate in $\Sym(V\Gamma)$, we therefore have $G\leqslant B\rtimes \Sym(2)=H\wr \Sym(2)$ with base group $B=H^2$ and `top group' $\Sym(2)=\langle (1,2)\rangle$, and we write  
\begin{equation}\label{pe:biqp-setup}
    \mbox{$B= H^2=X\times Y$ with $X=B\cap (H\times 1)$ and $Y=B\cap(1\times H)$.}
\end{equation}
As $G$ is vertex-transitive, $G$ contains an element $g$ interchanging $\Delta$ and $\Delta'$, and since $G^+$ projects onto $Y$, we may choose such an element of the form $g=(x,1)(1,2)$ for some $(x,1)\in X$. Thus $G\leqslant H\wr \Sym(2) = (X\times Y)\langle g\rangle$.  Our first result restricts the subgroup $G^+$. Here, for $\varphi\in\Aut(H)$, we write \(\Diag_{\varphi}(X\times Y) = \{(h, h^\varphi)\mid h\in H\}\) for the corresponding diagonal subgroup of $B$. Also for $h\in H$ we write $\iota_h$ for the inner automorphism $\iota_h:y\to h^{-1}yh$.

\begin{proposition}\label{Biquasi0}
    Suppose that \(\Gamma\) is a finite connected \(G\)-vertex-bi-quasiprimitive, \((G,s)\)-arc-trans\-itive digraph,  which is not a directed cycle, where \(s\geqslant2\). Let \(G^+\) be a subgroup of \(G\) of index \(2\) with two orbits \(\Delta\) and \(\Delta'\) in \(V\Gamma\), let \(H\) be the permutation group induced by \(G^+\) on \(\Delta\), and let $B, X, Y$ and $g=(x,1)(1,2)$ be as above.  Then \(G^+=\Diag_{\varphi}(H\times H)\) for some \(\varphi\in\Aut(H)\) such that $\varphi^2=\iota_x\in\Inn(H)$. Moreover part (a)(i) or (c) of \cite[Theorem 1.1]{Praeger03} holds.
    \end{proposition}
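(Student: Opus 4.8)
The plan is to exploit the explicit embedding $G\leqslant H\wr\Sym(2)=(X\times Y)\langle g\rangle$ together with the bi-quasiprimitivity of $G$ and the defining no-$2$-cycle axiom of a digraph to force $G^+$ to be a \emph{full} diagonal subgroup, and then to read the relation $\varphi^2=\iota_x$ off the conjugation action of $g$. First I would record the two coordinate projections $\pi_X,\pi_Y$ of $G^+\leqslant X\times Y$. By hypothesis $\pi_X(G^+)=X$ (this is the action of $G^+$ on $\Delta$, which induces $H$) and $\pi_Y(G^+)=Y$, so $G^+$ is a subdirect subgroup of $X\times Y\cong H\times H$. Put $K_1=\ker(\pi_X|_{G^+})=G^+\cap Y$ and $K_2=\ker(\pi_Y|_{G^+})=G^+\cap X$, so that $K_1$ fixes $\Delta$ pointwise and $K_2$ fixes $\Delta'$ pointwise. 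Since a subdirect subgroup with both ``partial kernels'' trivial is exactly the graph of an isomorphism, it suffices to prove $K_1=K_2=1$, for then $G^+=\{(h,h^\varphi)\mid h\in H\}=\Diag_{\varphi}(X\times Y)$ with $\varphi=\pi_Y\circ(\pi_X|_{G^+})^{-1}\in\Aut(H)$.

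Next I would show that $K_1\times K_2\lhd G$. As $X,Y$ are direct factors of $B$ and $G^+\leqslant B$, both $K_1,K_2$ are normal in $G^+$; lying in distinct factors they commute, so $K_1K_2=K_1\times K_2\lhd G^+$. A direct wreath-product computation gives $g(1,b)g^{-1}=(xbx^{-1},1)$, whence $g$ conjugates $K_1$ into $X\cap G^+=K_2$ and $K_2$ into $Y\cap G^+=K_1$; thus $g$ normalises $K_1\times K_2$, and since $G=\langle G^+,g\rangle$ we conclude $K_1\times K_2\lhd G$.

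The heart of the argument — and the step I expect to be the main obstacle — is ruling out $K_1\neq1$. Suppose $K_1\neq1$; then $K_2=gK_1g^{-1}\neq1$, so $K_1\times K_2$ is a \emph{nontrivial} normal subgroup of $G$. Because $K_1\times K_2\leqslant G^+$, each of its orbits lies inside $\Delta$ or inside $\Delta'$, and on $\Delta$ it acts as $K_2$ while on $\Delta'$ it acts as $K_1$. Bi-quasiprimitivity forces at most two orbits on $V\Gamma$, and as both parts are nonempty this means exactly one orbit on each part; that is, $K_2$ is transitive on $\Delta$ and $K_1$ is transitive on $\Delta'$. Now fix $u\in\Delta$: since $\Gamma$ is bipartite with parts $\Delta,\Delta'$ and every vertex has out-valency at least $1$ (by vertex-transitivity), the out-neighbourhood $\Gamma^+(u)$ is a nonempty subset of $\Delta'$ permuted by $K_1\leqslant G_u$, so transitivity of $K_1$ on $\Delta'$ gives $\Gamma^+(u)=\Delta'$. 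Symmetrically, using $K_2\leqslant G_v$ for $v\in\Delta'$, every such $v$ has $\Gamma^+(v)=\Delta$. Hence for all $u\in\Delta,\ v\in\Delta'$ both $(u,v)$ and $(v,u)$ are arcs, contradicting the digraph axiom. Therefore $K_1=K_2=1$ and $G^+=\Diag_{\varphi}(X\times Y)$.

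It then remains to compute $\varphi^2$ and to place the structure within \cite[Theorem 1.1]{Praeger03}. Conjugating a typical element $(h,h^\varphi)\in G^+$ by $g=(x,1)(1,2)$, the wreath calculation yields $g(h,h^\varphi)g^{-1}=(xh^\varphi x^{-1},h)$; since $g$ normalises $\Diag_\varphi$, this must again lie in $\Diag_\varphi$, forcing $(xh^\varphi x^{-1})^\varphi=h$ for all $h\in H$, which as an identity of automorphisms reads $\varphi^2=\iota_x\in\Inn(H)$. Finally I would match the structure obtained — $G\leqslant H\wr\Sym(2)$ with $G^+$ a full diagonal subgroup and $G$ bi-quasiprimitive — against the case list of \cite[Theorem 1.1]{Praeger03}, verifying that the diagonal form is incompatible with every case except (a)(i) and (c). This last step is a direct cross-reference with the classification rather than fresh computation, so I expect the substantive difficulty to reside entirely in the orbit/no-$2$-cycle contradiction of the third paragraph.
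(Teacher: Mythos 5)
Your proof is correct, and it takes a genuinely different route from the paper's at the crucial step. Both arguments share the same endgame: once $G^+\cap X$ and $G^+\cap Y$ are known to be transitive on $\Delta$ and $\Delta'$ (while fixing $\Delta'$, resp.\ $\Delta$, pointwise), every out-neighbourhood is a whole part of the bipartition, so arcs run in both directions between any $u\in\Delta$ and $v\in\Delta'$, which is impossible in a digraph; the Goursat identification $G^+=\Diag_\varphi(X\times Y)$ and the conjugation computation giving $\varphi^2=\iota_x$ are then identical in substance to the paper's. The difference is how transitivity of the intersections is obtained. The paper gets it from the classification \cite[Theorem 1.1]{Praeger03} (non-diagonal $G^+$ forces case (a)(ii), (a)(iii) or (b), in which the intersections are nontrivial, transitive and faithful), and then proves an additional regularity Claim using $2$-arc-transitivity via \cite[Lemma 2.3]{quasi} together with the factorisation lemma \cite[Lemma 3.1]{symplectic}. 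You get it directly from bi-quasiprimitivity: $(G^+\cap X)\times(G^+\cap Y)$ is normal in $G$ because conjugation by $g$ swaps the two factors, so if it is nontrivial it must have exactly one orbit in each of $\Delta$ and $\Delta'$. This bypasses the classification (which you need only at the very end, to place a diagonal $G^+$ in case (a)(i) or (c)), bypasses both cited lemmas and the regularity Claim (which, notably, the paper's own final contradiction never actually uses --- it too only needs transitivity), and requires only arc-transitivity rather than $2$-arc-transitivity, so your argument is both shorter and formally more general. Two small points you should spell out to make the write-up airtight: that every arc of $\Gamma$ joins $\Delta$ to $\Delta'$ (one line: the arcs form a single $G$-orbit and $G$ preserves $\{\Delta,\Delta'\}$, so an arc inside one part would force all arcs inside parts and disconnect $\Gamma$), and that $\pi_Y(G^+)=Y$ is part of the standing setup preceding the proposition rather than a consequence of the definition of $H$.
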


The conclusion of Proposition~\ref{Biquasi0} is rather technical and the two cases will be explored further in this section. We note that in \cite[Theorem 1.1(a)(i)]{Praeger03}, the group $H$ is quasiprimitive on $\Delta$, while in \cite[Theorem 1.1(c)]{Praeger03}, $H$ has an intransitive minimal normal subgroup $R$ such that $R^\varphi\ne R$, $M=R\times R^\varphi$ is transitive on $\Delta$, and $N=\Diag_\varphi(M\times M)$ is a minimal normal subgroup of $G$.

\begin{proof}
    Since \(\Gamma\) is connected, there must be an arc $(\delta, \tau)$ with $\delta\in\Delta$ and $\tau\in\Delta'$. (In fact each arc is either of this form or the reverse $(\tau,\delta)$, since $G$ is arc-transitive.) The image $\delta^g = \delta^{(x,1)(1,2)} = (\delta^x)^{(1,2)}\in \Delta'$, and since $G^+$ is transitive on $\Delta'$, there is some $(z,y)\in G^+$ such that $\tau=((\delta^x)^{(1,2)})^{(z,y)}$. Note that 
    \[
    \tau=   \delta^{g(z,y)}= ((\delta^x)^{(1,2)})^{(z,y)} = ((\delta^x)^{(y,z)})^{(1,2)} = 
    (\delta^{xy})^{(1,2)}.
    \]
    Since $g(z,y)\in G\leqslant \Aut(\Gamma)$, the image $(\delta,\tau)^{g(z,y)}=(\tau,\tau^{g(z,y)})$ is also an arc of $\Gamma$ and $\mu:= \tau^{g(z,y)}\in (\Delta')^{g(z,y)}=\Delta$. Thus $(\delta,\tau,\mu)$ is a $2$-arc of $\Gamma$, and since $\Gamma$ is a digraph, we conclude in particular that $\delta\ne\mu$. Note that 
    \begin{equation}\label{e:mu}
    \mu= \tau^{g(z,y)} = ((\delta^{xy})^{(1,2)})^{g(z,y)}
    = (\delta^{xy})^{(1,2)(x,1)(1,2)(z,y)} = (\delta^{xy})^{(1,x)(z,y)} =\delta^{xyz} \in \Delta\setminus\{\delta\}.
    \end{equation}
    Since \(\Gamma\) is \((G,2)\)-arc-transitive, it follows from \cite[Lemma 2.3]{quasi} that \(G_{\tau}=G_{\delta\tau}G_{\tau\mu}\).
    
    Suppose for a contradiction that \(G^+\) is not a diagonal subgroup of $X\times Y$. Then it follows from \cite[Theorem 1.1]{Praeger03} that the subgroups \(G^+\cap X\) and \(G^+\cap Y\) are non-trivial, and one of the parts (a)(ii), (a)(iii) or (b) of \cite[Theorem 1.1]{Praeger03} holds. In all these cases, \(G^+\cap X\) and \(G^+\cap Y\)  act transitively on \(\Delta\) and \(\Delta'\), respectively, and these actions are faithful.

\medskip\noindent
\emph{Claim. \(G^+\cap X\) and \(G^+\cap Y\)  act regularly on \(\Delta\) and \(\Delta'\), respectively.}

    \smallskip
    Note that \(G_{\tau}=G^{+}_{\tau}\). Let \(\pi_{1}\) and \(\pi_{2}\) be the natural projections on \(X\) and \(Y\), respectively. Then  \(\pi_{1}(G_{\tau})\cong G_{\tau}/(G_{\tau}\cap Y)\). Since \(G\cap Y=G^+\cap Y\) is transitive on \(\Delta'\) the index \(|G^+\cap Y|/|G_{\tau}\cap Y|=|\Delta'|\), and since \(\tau^{G^+}=\Delta'\) and \(\pi_{1}(G^+)=X\),  
    \[
    |\Delta'|=\frac{|G^+|}{|G_{\tau}|}=\frac{|G^+\cap Y|\cdot|\pi_{1}(G^+)|}{|G_{\tau}\cap Y|\cdot|{\pi_{1}}(G_{\tau})|}=|\Delta'|\cdot\frac{|\pi_{1}(G^+)|}{|{\pi_{1}}(G_{\tau})|}=|\Delta'|\cdot\frac{|X|}{|\pi_{1}(G_{\tau})|}.
    \]
    It follows that \(\pi_{1}(G_{\tau})=X\), and so we have  the factorisation $X=\pi_{1}(G_{\tau})=\pi_{1}(G_{\delta\tau})\pi_{1}(G_{\tau\mu})$. 
    
    Since \(G_{\delta}\cap X\unlhd\, G_{\delta}\), it follows that \(\pi_{1}(G_{\delta}\cap X)\unlhd\,\pi_{1}(G_{\delta})\), and since by definition $\pi_{1}(G_{\delta}\cap X) = G_{\delta}\cap X$, we have $G_{\delta}\cap X \unlhd\, \pi_{1}(G_{\delta})$. Thus $\pi_{1}(G_{\delta\tau})\leqslant \pi_{1}(G_{\delta})\leqslant N_X(G_\delta\cap X)$. The same argument with $\mu$ in place of $\delta$ gives $\pi_{1}(G_{\tau\mu})\leqslant \pi_{1}(G_{\mu})\leqslant N_X(G_\mu\cap X)$. Now by \eqref{e:mu}, the subgroups $G_\delta\cap X = (G^+\cap X)_\delta$ and $G_\mu\cap X = (G^+\cap X)_\mu$ are conjugate in $X$, and both are proper subgroups of $X$. Further, if \(G^+\cap X\)  is not regular on \(\Delta\), then these subgroups are nontrivial and are not normal in $X$. In this case the factorisation $X=\pi_{1}(G_{\delta\tau})\pi_{1}(G_{\tau\mu})$ contradicts \cite[Lemma 3.1]{symplectic}. Hence \(G^+\cap X\)  is regular on \(\Delta\), and therefore also \(G^+\cap Y\)  is regular on \(\Delta'\), proving the Claim.
    
    \medskip
    Now \(G^+\cap X\) fixes $\tau\in\Delta'$ and is transitive on $\Delta$. Since $\Gamma(\tau)\subseteq \Delta$ it follows that equality holds, that is, $\Gamma(\tau)= \Delta$. A similar argument shows that $\Gamma(\delta)= \Delta'$, and hence $\Gamma$ is the complete bipartite digraph $K_{m,m}$ with bipartition $\{\Delta, \Delta'\}$ and with edges directed from $\Delta$ to $\Delta'$, where $m=|\Delta|$. This is a contradiction since this digraph is not vertex-transitive, whereas we are assuming that $G$ is transitive on $V\Gamma$.

    This contradiction shows that \(G^+\) is a diagonal subgroup of $X\times Y$, and hence \(G^+=\Diag_{\varphi}(H\times H)\) for some \(\varphi\in\Aut(H)\), and hence part (a)(i) or (c) of \cite[Theorem 1.1]{Praeger03} holds. Since $g=(x,1)(1,2)$ normalises $G^+$, for each $(h,h^\varphi)\in G^+$, the conjugate $(h,h^\varphi)^g= (h^\varphi,h^x)$ also lies in $G^+$, and hence $h^x=h^{\varphi^2}$ for each $h\in H$. Thus $\varphi^2=\iota_x\in\Inn(H)$.
\end{proof}



 We collect together in Remark~\ref{r:Biquasi} further information about both the biquasiprimitive group $G$ and the `bipartite half digraph'  induced on $\Delta$ as in Definition~\ref{d:biphalf}, in the case where $\Gamma$ is $(G,4)$-arc-transitive.

\begin{remark}\label{r:Biquasi}
{\rm 
    Let $\Gamma, s, G, G^+, g, \varphi, \Delta, \Delta', H$ be as in Proposition~\ref{Biquasi0}, so in particular $\Gamma$ is not a directed cycle, and suppose that $s\geqslant4$. Let    
    \(\Gamma'=(\Delta, A_2)\) be the bipartite half digraph defined in Definition~\ref{d:biphalf}. It follows from Proposition~\ref{p:bip} that
    \begin{enumerate}
        \item[(a)]  \(\Gamma'\) is a connected \(H\)-vertex-transitive \((H,\lfloor s/2\rfloor)\)-arc-transitive digraph, and \(\Gamma'\) is not a directed cycle; 
        \item[(b)] and if    $(\Gamma, G, s)\in\DD$ then also $(\Gamma', H, \lfloor s/2\rfloor)\in\DD$.
    \end{enumerate}
\noindent
Further, by Proposition \ref{Biquasi0}  together with \cite[Theorem 1.1]{Praeger03},  \(G^+=\Diag_{\varphi}(H\times H)\) and one of the following holds:
\begin{enumerate}
    \item[(c)] Either \(H\) is quasiprimitive on $\Delta$ and $g$ does not centralise $G^+$ (\cite[Theorem 1.1 (a)(i)]{Praeger03});
    \item[(d)] or \(H\) has a  minimal normal subgroup \(R\) such that \(R\) is intransitive on $\Delta$, \(R^{\varphi}\ne R\), 
    \(M:=R\times R^{\varphi}\) is transitive on \(\Delta\), and \(N:=\Diag_{\varphi}(M\times M)\) is a minimal normal subgroup of $G$ (\cite[Theorem 1.1 (c)]{Praeger03}). 
    
        
\end{enumerate}
}
\end{remark}    

Next we obtain further information about the group $G$.

\begin{lemma}\label{l:i-or-iia}
Let $\Gamma, s, G, H$ etc be as in Proposition~\ref{Biquasi0} with $s\geqslant4$. Then no normal subgroup of $H$ acts regularly on $\Delta$, and $\Soc(G)$ is insoluble and is the unique minimal normal subgroup of $G$.
Moreover,
\begin{enumerate}
    \item[(a)] in the case Remark~\ref{r:Biquasi}(c),  either the quasiprimitive $H$-action on $\Delta$ has type AS or PA, or $s\in\{4,5\}$ and the action is of type SD or CD;
    \item[(b)] in the case Remark~\ref{r:Biquasi}(d), we have  $N=\Soc(G)\cong T^{2k}$, where $R=T^k$ for some finite nonabelian simple group $T$ and integer $k\geqslant1$.
\end{enumerate} 
\end{lemma}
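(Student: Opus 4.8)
The plan is to establish the three global assertions first, in the order (i) no normal subgroup of $H$ is regular on $\Delta$, (ii) $\Soc(G)$ is insoluble, (iii) $\Soc(G)$ is the unique minimal normal subgroup, and then to read off parts (a) and (b). The linchpin is assertion (i), which I would deduce directly from the bipartite-half machinery: by Remark~\ref{r:Biquasi}(a), $\Gamma'=(\Delta,A_2)$ is a connected $(H,\lfloor s/2\rfloor)$-arc-transitive digraph with $\lfloor s/2\rfloor\geqslant 2$ that is not a directed cycle, so $\Gamma'$ admits $2$-arcs (whence $|\Delta|\geqslant 3$) and $H$ acts faithfully on $\Delta$. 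If some $L\unlhd H$ were regular on $\Delta$, then \cite[Theorem 3.1]{Praeger} would force $\Gamma'$ to be a directed cycle, a contradiction.

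Next I would analyse minimal normal subgroups of $G$. Recall from Proposition~\ref{Biquasi0} that $G^+=\Diag_\varphi(H\times H)$, and that the first projection is an isomorphism $G^+\to H$ carrying the $G^+$-action on $\Delta$ to the $H$-action; thus a subgroup $K\leqslant G^+$ with $K\unlhd G$ corresponds to a subgroup $K_0\unlhd H$, and $K$ is regular on $\Delta$ precisely when $K_0$ is. Let $K$ be a minimal normal subgroup of $G$. Since $G^+\unlhd G$ has index $2$, either $K\leqslant G^+$ or $K\cap G^+=1$; in the latter case $K$ embeds in $G/G^+\cong Z_2$, so $K=\langle z\rangle\cong Z_2$ is central, hence fixed-point-free, and its orbits on $V\Gamma$ all have size $2$, giving $|\Delta|=|V\Gamma|/2\geqslant 3$ orbits and contradicting bi-quasiprimitivity. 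Hence $K\leqslant G^+$, so $K$ has exactly the two orbits $\Delta,\Delta'$, acting transitively and faithfully on $\Delta$; by assertion (i), $K$ is not regular on $\Delta$, and were $K$ abelian it would be regular, so $K$ is insoluble. This proves (ii).

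For uniqueness (iii), I would argue by contradiction: if $K_1\ne K_2$ were distinct minimal normal subgroups then $K_1\cap K_2=1$ and $[K_1,K_2]=1$, so $K_1|_\Delta$ centralises the transitive group $K_2|_\Delta$ and is therefore semiregular on $\Delta$; but $K_1|_\Delta$ is also transitive, hence regular, again contradicting assertion (i). Thus $\Soc(G)$ is the unique minimal normal subgroup of $G$, and it is insoluble by (ii). Finally I would deduce (a) and (b). In the case Remark~\ref{r:Biquasi}(c), $H$ is quasiprimitive on $\Delta$, so Proposition~\ref{p:pa}(a) applied to the $H$-vertex-quasiprimitive $(H,\lfloor s/2\rfloor)$-arc-transitive non-cycle $\Gamma'$ yields type $AS$, $SD$, $CD$ or $PA$, with type $PA$ or $AS$ forced unless $\lfloor s/2\rfloor=2$; since $SD$ or $CD$ requires $\lfloor s/2\rfloor\leqslant 2$, this gives $s\in\{4,5\}$ in those cases. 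In the case Remark~\ref{r:Biquasi}(d), $N=\Diag_\varphi(M\times M)$ is a minimal normal subgroup, so by (iii) $N=\Soc(G)$; since $M=R\times R^\varphi\unlhd H$ is transitive and faithful on $\Delta$, assertion (i) forces $M$ non-regular and hence nonabelian, so $R$ is a nonabelian minimal normal subgroup $T^k$ of $H$, giving $N\cong M\cong T^{2k}$.

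The steps I expect to be most delicate are the two places inside assertions (ii) and (iii) where a degenerate possibility must be excluded: ruling out a central involution $K\cong Z_2$ meeting $G^+$ trivially, and the centralizer-semiregularity argument separating two distinct minimal normal subgroups. Both of these ultimately collapse onto assertion (i)---the non-existence of a normal subgroup of $H$ regular on $\Delta$---so the main conceptual content is recognising that this single fact, supplied by \cite[Theorem 3.1]{Praeger} via the bipartite half $\Gamma'$, simultaneously forbids regular, abelian, and multiple minimal normal subgroups.
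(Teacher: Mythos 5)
Your proof is correct, and two of its three components coincide with the paper's own argument: you obtain the key assertion (no normal subgroup of $H$ regular on $\Delta$) exactly as the paper does, from Remark~\ref{r:Biquasi}(a) and \cite[Theorem 3.1]{Praeger}; part (a) from Proposition~\ref{p:pa}(a); and part (b) from the same contradiction (an abelian $R$ would make $M$ transitive abelian, hence regular). Where you genuinely diverge is the middle assertion, that $\Soc(G)$ is insoluble and is the unique minimal normal subgroup of $G$. The paper proves this by splitting into the two cases of Remark~\ref{r:Biquasi} and leaning on the structure from \cite[Theorem 1.1]{Praeger03}: in case (c) it uses the condition that $g$ does not centralise $G^+$ to force $\Soc(G)\leqslant G^+$ and identifies $\Soc(G)=\Diag_\varphi(\Soc(H)\times \Soc(H))$, while in case (d) it quotes \cite[Theorem 1.1(c)]{Praeger03} for $N=\Soc(G)$ directly. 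You instead run a unified, case-free analysis of the minimal normal subgroups of $G$: one meeting $G^+$ trivially would be a central involution, hence fixed-point-free with $|\Delta|\geqslant 3$ orbits, contradicting bi-quasiprimitivity; so every minimal normal subgroup lies in $G^+$, acts faithfully and transitively on $\Delta$, is non-regular by the key assertion, hence insoluble; and two distinct ones would centralise each other, making one semiregular and hence regular, again a contradiction. Your route is more self-contained (it never invokes the non-centralising condition of \cite[Theorem 1.1(a)(i)]{Praeger03}), whereas the paper's route yields the explicit socle description $\Diag_\varphi(\Soc(H)\times\Soc(H))$ in case (c), which your argument does not recover (the lemma does not claim it). Two steps you should state explicitly: the jump from nonabelian to insoluble uses that a minimal normal subgroup is characteristically simple, and the semiregularity of a group centralising a transitive group is the standard fact \cite[Theorem 3.2]{csaba} that the paper itself uses elsewhere.
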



\begin{proof}
    By Remark~\ref{r:Biquasi}(a), the bipartite half digraph $\Gamma'$ is a connected $H$-vertex-transitive \((H,\lfloor s/2\rfloor )\)-arc-transitive digraph, and is not a directed cycle. Since $\lfloor s/2\rfloor \geqslant2$, it follows from \cite[Theorem 3.1]{Praeger} that no normal subgroup of $H$ acts regularly on $\Delta$.

    Consider first the group in Remark~\ref{r:Biquasi}(c). Here $H$ is quasiprimitive on $V\Gamma'=\Delta$, and since $\lfloor s/2\rfloor \geqslant2$, it follows from Proposition~\ref{p:pa}(a) that the type of the $H$-action on $\Delta$ is as in part (a).  In particular $\Soc(H)$ is insoluble and is the unique minimal normal subgroup of $H$. By Remark~\ref{r:Biquasi}(c), $g$ does not centralise $G^+$, and so the socle $\Soc(G)\leqslant G^+$. Since $G^+=\Diag_\varphi(H\times H)$, by Proposition~\ref{Biquasi0}, we conclude that $\Soc(G)=\Diag_\varphi(\Soc(H)\times \Soc(H))$, so $\Soc(G)$ is insoluble and is the unique minimal normal subgroup of $G$. 



    Consider now the group in Remark~\ref{r:Biquasi}(d). It follows from  \cite[Theorem 1.1 (c)]{Praeger03} that $N=\Soc(G)$. Here $N\cong M=R\times R^\varphi$. If $R$ were abelian then $M$ would be a transitive abelian permutation group and hence would be regular (see \cite[Theorem 3.2]{csaba}), which is a contradiction. Hence $R$ is a nonabelian minimal normal subgroup of $H$ so $R=T^k$, for some nonabelian simple group and $k\geqslant1$, and $R$  is characteristically simple (see \cite[Lemma 3.14]{csaba}); and $g$ interchanges $\Diag_\varphi(R\times R)$ and  $\Diag_\varphi(R^\varphi\times R^\varphi)$, so that $N\cong T^{2k}$ is the unique minimal normal subgroup of $G$.      
    \end{proof}

We make some remarks about Lemma~~\ref{l:i-or-iia}(a) relevant to Problem~\ref{p4}.

\begin{remark}\label{r:sd}
{\rm

Regarding Lemma~\ref{l:i-or-iia}(a), in Section~\ref{Sec:Ex}  we give families of examples where the $H$-action on the bipartite half $\Gamma'$ is quasiprimitive of type $AS$ or $PA$. We do not know of any examples  where this $H$-action is $2$-arc-transitive and quasiprimitive of type $SD$ or $CD$ with  $s\in\{4,5\}$, see Problem~\ref{p4}.  We would be interested to know if there are any such examples. We note that Giudici and Xia show in \cite[Theorem 1.2]{quasi} that all $(X,2)$-arc transitive digraphs $\Delta$ with $X$ quasiprimitive on $\Delta$ of type SD arise from \cite[Construction 3.1]{quasi}. In that construction $\Soc(X)=T^k$ for an arbitrary nonabelian finite simple group $T$  with $k=|T|$, and the group $X = \Soc(X)\rtimes T$ (see the line preceding \cite[Lemma 3.6]{quasi}). The group $X$ induces a primitive subgroup $T\times T$ of the holomorph ${\rm Hol}(T) < \Sym(k)$ on the simple direct factors of $\Soc(X)$, and in particular $X$ is vertex-primitive of type $SD$. Now it follows from \cite[Theorem, Section 3.4]{P90} that the  overgroups of $T\times T$ in $\Sym(k)$ are either subgroups of ${\rm Hol}(T).2$ or contain $\Alt(k)$. Giudici and Xia prove in \cite[Theorem 3.15]{quasi} that $\Aut(\Delta)$ induces precisely the subgroup ${\rm Hol}(T)$ of $S_k$ on the simple direct factors of $\Soc(X)$, and in fact that $\Aut(\Delta) = \Soc(X). \Aut(T)$, is vertex-primitive of type $SD$, and is $2$-arc-transitive but not $3$-arc-transitive on $\Delta$.    

  }  
\end{remark}

It remains to analyse further the case of Lemma~\ref{l:i-or-iia}(b) where \(H\) is not quasiprimitive on \(\Delta\). We note that, for each $s\geq4$ there are infinitely many possibilities for $\Gamma, s, G, H$ as in Proposition~\ref{Biquasi0}  such that Lemma~\ref{l:i-or-iia}(b) holds for $G$, see Proposition~\ref{ex alt and sym 3}.

\begin{proposition}\label{Biquasi1}
    Let $\Gamma, s, G, H$ etc be as in Proposition~\ref{Biquasi0} with $s\geqslant4$, and suppose that $H, R, M$, $N, \Gamma'$ are as in Remark~\ref{r:Biquasi}(d). Let $C:=C_H(R)$, $\overline{H}=H/C$, and $\overline{R}=RC/C$. Then
    \begin{enumerate}
        \item[(a)] the $H$-normal quotient $(\Gamma')_C$ is a connected  $\overline{H}$-vertex-quasiprimitive $(\overline{H}, \lfloor s/2\rfloor)$-arc-transit\-ive digraph that is not a directed cycle;
        \item[(b)] either the quasiprimitive $\overline{H}$-action on $V(\Gamma')_C$ has type AS or PA, or $s\in\{4,5\}$ and the action has type SD or CD. 
    \end{enumerate}    
\end{proposition}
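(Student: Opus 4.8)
The plan is to set up the quotient carefully and then reduce Proposition~\ref{Biquasi1} to the already-established machinery of Sections~\ref{section:quasi} and~\ref{section:biquasi}. We work in the situation of Lemma~\ref{l:i-or-iia}(b), so $N=\Soc(G)\cong T^{2k}$, $R=T^k$ is a nonabelian minimal normal subgroup of $H$ that is intransitive on $\Delta$, and $M=R\times R^\varphi$ is transitive on $\Delta$. Set $C=C_H(R)$, $\overline{H}=H/C$, $\overline{R}=RC/C$. First I would verify that $C$ is intransitive on $\Delta$, which is needed before we may form the normal quotient $(\Gamma')_C$ and apply Proposition~\ref{p:quot2}(c). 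Since $R$ is transitive on its own support and $C$ centralises $R$, the orbits of $C$ are contained in the fibres of the $R$-action; because $R$ is nonregular (by the first sentence of Lemma~\ref{l:i-or-iia}, no normal subgroup of $H$ acts regularly on $\Delta$) one checks $C$ cannot be transitive. Having $|V(\Gamma')_C|\geqslant 3$ in hand, Proposition~\ref{p:quot}(a) (applied to the $H$-vertex-transitive, $(H,\lfloor s/2\rfloor)$-arc-transitive digraph $\Gamma'$ from Remark~\ref{r:Biquasi}(a), noting $\lfloor s/2\rfloor\geqslant 2$) gives that $(\Gamma')_C$ is a digraph that is $\overline{H}$-vertex-transitive and $(\overline{H},\lfloor s/2\rfloor)$-arc-transitive.

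The next and central step for part (a) is to prove that $\overline{H}=H/C$ acts quasiprimitively on $V(\Gamma')_C$. The natural candidate for the unique minimal normal subgroup of $\overline{H}$ is $\overline{R}=RC/C$. The structural input is that $R$ is a nonabelian minimal normal subgroup of $H$ and $C=C_H(R)$; then $\overline{R}\cong R$ is the socle of $\overline{H}$ acting faithfully, and because $R\cap C=Z(R)=1$ we have $\overline{R}\cong R\cong T^k$. I would argue that $\overline{H}$ is quasiprimitive by showing every nontrivial normal subgroup of $\overline{H}$ is transitive on the $C$-orbits: such a subgroup contains $\overline{R}$ (since $\overline{R}$ is the unique minimal normal subgroup of the almost simple / diagonal-type configuration forced by $R=T^k$), and $\overline{R}$ is transitive on $V(\Gamma')_C$ precisely because $R$ together with $C$ generates the $M$-orbit structure that is transitive on $\Delta$ after collapsing $C$-orbits. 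Here I must be careful: transitivity of $\overline{R}$ on the quotient vertex set follows from the fact that $R$ is transitive on the set of $C$-orbits (even though $R$ itself is intransitive on $\Delta$, the $C$-orbits are exactly the $R$-orbit-pieces that $R$ permutes transitively once we pass to the quotient). Finally, $(\Gamma')_C$ is not a directed cycle: if it were, then $\overline{H}$ would be cyclic, forcing $\overline{R}\cong T^k$ to be soluble, contradicting $T$ nonabelian simple.

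For part (b), once part (a) establishes that $(\Gamma')_C$ is a connected $\overline{H}$-vertex-quasiprimitive, $(\overline{H},\lfloor s/2\rfloor)$-arc-transitive digraph that is not a directed cycle, the conclusion is essentially immediate from Proposition~\ref{p:pa}(a). Indeed, with $s\geqslant 4$ we have $\lfloor s/2\rfloor\geqslant 2$, so Proposition~\ref{p:pa}(a) applies and tells us $\overline{H}$ has type AS, SD, CD or PA, and that the types SD and CD can occur only when the arc-transitivity parameter equals $2$, i.e.\ when $\lfloor s/2\rfloor=2$, which for $s\geqslant 4$ forces $s\in\{4,5\}$. This yields exactly the dichotomy stated in part (b): either the type is AS or PA, or $s\in\{4,5\}$ and the type is SD or CD.

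The main obstacle I anticipate is the verification in part (a) that $\overline{R}$ really is transitive on the quotient vertex set $V(\Gamma')_C$ and is the (unique minimal normal) socle of $\overline{H}$ — that is, correctly tracking how the intransitive subgroup $R$ on $\Delta$ becomes transitive on the collapsed $C$-orbits, and confirming that no smaller normal subgroup of $\overline{H}$ interferes with quasiprimitivity. This requires pinning down the relationship between the $C$-orbit partition, the $R$-orbit structure, and the transitivity of $M=R\times R^\varphi$; the rest of the argument is a clean application of Propositions~\ref{p:quot} and~\ref{p:pa} together with the nonabelian-simple structure of $R$ recorded in Lemma~\ref{l:i-or-iia}(b).
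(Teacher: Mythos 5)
Your overall skeleton is the same as the paper's: prove $C$ is intransitive on $\Delta$, form the $H$-normal quotient $(\Gamma')_C$ via Proposition~\ref{p:quot}(a), show $\overline{R}\cong R=T^k$ is the unique minimal normal subgroup of $\overline{H}$ (via the embedding $\overline{H}=H/C_H(R)\leqslant \Aut(R)$ and the fact that $\Inn(R)$ has trivial centraliser there) and that $\overline{R}$ is transitive on the $C$-orbits, then invoke Proposition~\ref{p:pa}(a) for part (b). However, your argument for the crucial first step --- that $C$ is intransitive --- contains a genuine error. The claim that ``the orbits of $C$ are contained in the fibres of the $R$-action'' is false, and in fact provably false in this setting: since $M=R\times R^\varphi$ we have $R^\varphi\leqslant C_H(R)=C$, so $RC\supseteq RR^\varphi=M$ is transitive on $\Delta$; if every $C$-orbit lay inside an $R$-orbit, the $RC$-orbits would coincide with the $R$-orbits, contradicting the intransitivity of $R$. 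Moreover, the corrected version of your argument still fails: if $C$ were transitive, the standard centraliser argument only yields that $R$ (which centralises $C$) is \emph{semiregular} on $\Delta$, and a semiregular \emph{intransitive} $R$ does not contradict the statement of Lemma~\ref{l:i-or-iia} that no normal subgroup of $H$ is \emph{regular} (regular means semiregular and transitive). This is exactly why the paper has to work harder: assuming $C$ transitive, it passes to the $H$-normal quotient $(\Gamma')_{R^\varphi}$ (legitimate because $R^\varphi$ has more than two orbits), where $M/R^\varphi\cong R$ becomes a vertex-transitive normal subgroup centralised by the vertex-transitive $C/R^\varphi$, hence is regular by \cite[Theorem 3.2]{csaba}; since $M/R^\varphi\cong T^k$ is insoluble the quotient is not a directed cycle, and this contradicts \cite[Theorem 3.1]{Praeger}, which forbids vertex-regular normal subgroups for such $2$-arc-transitive digraphs.

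A second, smaller gap: you pass from ``$C$ intransitive'' directly to ``$|V(\Gamma')_C|\geqslant 3$'', but intransitivity only gives at least two $C$-orbits, and Proposition~\ref{p:quot}(a) requires at least three. The paper closes this by observing that $R$ acts transitively on the set of $C$-orbits (again because $RC\supseteq M$ is transitive), and $R=T^k$ is perfect so has no subgroup of index $2$; hence the number of $C$-orbits cannot be $2$. Relatedly, your ``not a directed cycle'' step should not assert that $\overline{H}$ itself would be cyclic (its action on the quotient need not be faithful a priori); the correct argument is that the perfect group $\overline{R}$ is transitive on at least three vertices, and a transitive action on a directed cycle would force a nontrivial cyclic quotient of $\overline{R}$. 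The remainder of your proposal (uniqueness of the minimal normal subgroup and part (b)) matches the paper's proof once these gaps are repaired.
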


In the next section we introduce a new digraph construction that can be applied to produce a $(G,4)$-arc-transitive $G$-vertex-bi-quasiprimitive digraph $\Gamma$ such that  Lemma~\ref{l:i-or-iia}(b) holds for $H$, and the normal quotient operation in Propostition~\ref{Biquasi1} produces a digraph admitting a vertex-primitive $\overline{H}$-action os type \(SD\), see Proposition~\ref{p:SD}.

\begin{proof}
    By Lemma~\ref{l:i-or-iia}, $R=T^k$, for some finite nonabelian simple group $T$ and integer $k\geq1$.  Now $C\unlhd H$ (as $R\unlhd H$), and $C\cap R=1$ (as $R$ has trivial centre), and $M\cap C=R^\varphi$ (as $M=R\times R^\varphi$). Recall that $R$ is intransitive on $\Delta$ and $R^\varphi$ acts transitively on the $R$-orbits in $\Delta$ (since $M$ is transitive on $\Delta$). Hence $R$, and similarly also $R^\varphi$, have more than two orbits in $\Delta$. 

\smallskip\noindent
\emph{Claim 1: $C$ is intransitive, with more than $2$ orbits,  on $\Delta$, and $R$ acts transitively on the $C$-orbits.}


Suppose to the contrary that $C$ is transitive on $\Delta$. Consider the $H$-normal quotient $(\Gamma')_{R^\varphi}$. Then since $\Gamma'$ is a connected $H$-vertex-transitive, $(H,\lfloor s/2\rfloor)$-arc-transitive digraph that is not a directed cycle (by Remark~\ref{r:Biquasi}(a)), it follows from Proposition~\ref{p:quot}(a) that $(\Gamma')_{R^\varphi}$ is a connected $(H/R^\varphi,\lfloor s/2\rfloor)$-arc-transitive digraph. Further, both $M/R^\varphi\cong R= T^k$ and $C/R^\varphi$ are vertex-transitive normal subgroups of $H/R^\varphi$. Since $C=C_H(R)$ and $M=R\times R^\varphi$, it follows that $C/R^\varphi$ centralises $M/R^\varphi$, and this implies, by \cite[Theorem 3.2]{csaba}, that $M/R^\varphi$ acts regularly on the  vertices of $(\Gamma')_{R^\varphi}$. Since  $M/R^\varphi$ is insoluble, the quotient   $(\Gamma')_{R^\varphi}$ is not a directed cycle, and we have a contradiction to \cite[Theorem 3.1]{Praeger} (that for such digraphs $H/R^\varphi$ has no vertex-regular normal subgroup). Hence $C$ is intransitive on $\Delta$. Finally  since $CR$ contains $M$, and $M$ is transitive on $\Delta$, it follows that $R$ acts transitively on the $C$-orbits in $\Delta$, and as $R$ has no subgroup of index $2$, $C$ must have more than two orbits in $\Delta$, proving  Claim 1.

\smallskip\noindent
\emph{Claim 2: The quotient $\overline{R}\cong R=T^k$, and $\overline{R}$ is the unique minimal normal subgroup of $\overline{H}$.}

The conjugation action of $H$ on $R$ induces a group homomorphism $H\to \Aut(R)$ with kernel $C = C_H(R)$, and hence we may identify $\overline{H}=H/C$ with a subgroup of $\Aut(R)=\Aut(T)\wr S_k$. The group $\overline{R}$ is then identified with the group of inner automorphisms ${\rm Inn}(R)$, and $\overline{R} = RC/C\cong R/(R\cap C)\cong R= T^k$. Also $\overline{R}$ is a minimal normal subgroup of $\overline{H}$ since $R$ is a minimal normal subgroup of $H$. Finally a second minimal normal subgroup of $\overline{H}$ would necessarily intersect $\overline{R}$ trivially and so would centralise $\overline{R}$. However $\overline{R}$ has trivial centraliser in $\Aut(R)$, and we conclude that $\overline{R}$ is the unique minimal normal subgroup of $\overline{H}$, and Claim 2 is proved.
   
\smallskip
In the light of Claim 1 we can apply Proposition~\ref{p:quot}(a) to the $H$-normal quotient $(\Gamma')_{C}$ and conclude that $(\Gamma')_{C}$ is a connected $\overline{H}$-vertex-transitive, $(\overline{H},\lfloor s/2\rfloor)$-arc-transitive digraph. By Claims 1 and 2, $\overline{H}$ has a unique minimal normal subgroup, namely $\overline{R}$, and $\overline{R}$ is vertex-transitive. Thus  $\overline{H}$ is vertex-quasiprimitive, and since $\overline{R}$ is insoluble, the digraph  $(\Gamma')_{C}$ is not a directed cycle. Thus part (a) is proved. 
Part (b) follows from Proposition~\ref{p:pa}(a).
  \end{proof}

\section{Construction of bi-quasiprimitive highly-arc-transitive digraphs}\label{s:conn}

In this section, we present a construction that, for a given  $s$-arc transitive digraph of order $n$ and valency $k\geq 2$ as input, produces a bipartite $2s$-arc-transitive digraph of order $2n^2$ with the same valency $k$. In order to obtain a connected bipartite digraph, the input digraph must be both connected and non-bipartite (Proposition~\ref{thm: bi-quasi con}). We have not been able to decide, for a given input triple $(\Delta, X, s)\in\mathcal{D}$, whether or not the output  triple $(\Gamma, G, 2s)$ of Construction~\ref{con1} necessarily lies in $\mathcal{D}$. It would be interesting to resolve this. 

\begin{problem}\label{p6}
    If $(\Delta, X, s)\in\mathcal{D}$ (as in \eqref{e:family}) with $\Delta$ connected and non-bipartite, decide whether or not the output triple $(\Gamma, G, 2s)$ from Construction~\ref{con1} must necessarily lie in $\mathcal{D}$.  
\end{problem}
Note that the construction is not the reverse of the bipartite halving construction in Definition~\ref{d:biphalf}, despite an apparent similarity in that both constructions would lead to doubling the arc-transitivity. Instead, the bipartite halves of the digraph $\Gamma$ in Construction~\ref{con1} are direct products of two copies of the input digraph $\Delta$, as defined in Construction~\ref{ex:PA}. 

{\color{purple}

}

\begin{construction}\label{con1}
{\rm
    Let $\Delta$ be a finite digraph of valency $k\geqslant2$, 
    and suppose that $X\leq \Aut(\Delta)$ is such that $\Delta$ is $X$-vertex-transitive and $(X,s)$-arc-transitive for some $s\geqslant2$. 
    \begin{enumerate}
    \item[(a)]   Define $\Gamma$ to be the bipartite digraph with vertex set 
        \[
    V\Gamma = \{ (v,w,\delta) \mid v,w\in V\Delta, \delta\in \mathbb{Z}_2=\{0,1\}\} = \Gamma_0\,\dot\cup\,\Gamma_1
    \]
    where  \(\Gamma_\delta:=\{(v,w,\delta):v,w\in V\Delta\}\), for \(\delta\in\{0,1\}\), are the parts of the bipartition, 
    and define the arc set $A\Gamma$ by
    \[
    \big((v,w,\delta),(v',w',\delta')\big)\in A\Gamma\quad \Longleftrightarrow\quad 
    \begin{cases}
       v'=v \ \mbox{and} \ (w,w')\in A\Delta  &\mbox{if $\delta=0$, $\delta'=1$}\\
       (v,v')\in A\Delta \ \mbox{and} \  w=w'  &\mbox{if $\delta=1$, $\delta'=0$.}\\  
    \end{cases} 
    \]
    \item[(b)] Let $G^+:=X\times X$, acting on $V\Gamma$ by $(x,y):(v,w,\delta)\to (v^x,w^y,\delta)$, for all $(v,w,\delta)\in V\Gamma, x,y\in X$.  
    
     \item[(c)] Let $(v_0,v_1)$ be an arc  of $\Delta$ and $g\in X$  such that $v_1=v_0^g$. 
   Let $t := (g,1)\tau$, where $\tau:(v,w,\delta)\to (w,v,\delta+1)$ for all $(v,w,\delta)\in V\Gamma$, and let $G=\langle G^+, t\rangle$. 
    \end{enumerate}
}
\end{construction}

For $v\in V\Delta$, we denote the set of out-neighbours of $v$ by $\Delta(v)=\{ v'|(v,v')\in A\Delta\}$ and the set of in-neighbours by  $\Delta^*(v)=\{ v'|(v',v)\in A\Delta\}$. Since $\Delta$ is finite, and $X$ is transitive on $V\Gamma$ and $A\Gamma$, we see that $|A\Delta|=|V\Delta|\cdot |\Delta(v)| = |V\Delta|\cdot |\Delta^*(v)|$, so that $\Delta$ has in-valency and out-valency  equal to $k$. Note that for infinite arc-transitive, vertex-transitive digraphs the out-valency and in-valency are not necessarily equal. We establish some of the basic properties of the digraph $\Gamma$.


\begin{proposition}\label{thm: bi-quasi con}
    Let $\Gamma, G$ be as in Construction~\ref{con1}. Then: 
        \begin{enumerate}
            
        \item[(a)] \(\Gamma\) is a bipartite digraph of valency \(k\), and \(\Gamma\) is connected if and only if \(\Delta\) is connected and not bipartite. Moreover each bipartite half of $\Gamma$ (as in Definition~\ref{d:biphalf}) is the direct product of two copies of $\Delta$ (as in Construction~\ref{ex:PA}).
        
        \item[(b)] 
        $G\leqslant \Aut(\Gamma)$, and acts transitively on \(V\Gamma\). Moreover $|G:G^+|=2$ with $t^2=(g,g)\in G^+$, and the $G^+$-orbits are $\Gamma_0$ and $\Gamma_1$, and are interchanged by $t$. 
        
        \item[(c)] For $r\geq1$, each $2r$-arc $(u_0,\dots, u_{2r})$ of $\Gamma$ with $u_0\in \Gamma_0$  satisfies
        \begin{align*}
            u_{2i} &= (v_i,w_i,0) &\mbox{for $0\leq i\leq r$}\\
            u_{2i+1} &= (v_i,w_{i+1},1) &\mbox{for $0\leq i\leq r-1$}
        \end{align*}
        such that $(v_0,v_1,\dots,v_r)$ and $(w_0,w_1,\dots,w_r)$ are $r$-arcs of $\Delta$.
        \item[(d)] The digraph $\Gamma$ is $(G,2s)$-arc-transitive.
    \end{enumerate}
\end{proposition}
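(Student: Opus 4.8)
The plan is to prove part (d) by reducing transitivity on the whole set of $2s$-arcs to transitivity of $G^+ = X\times X$ on those $2s$-arcs that begin in $\Gamma_0$, and then to translate this into transitivity of the \emph{product action} of $X\times X$ on ordered pairs of $s$-arcs of $\Delta$, where the $(X,s)$-arc-transitivity hypothesis does the real work.

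First I would observe that, since $\Gamma$ is bipartite with parts $\Gamma_0,\Gamma_1$ (part (a)) and every arc crosses between the two parts, each $2s$-arc begins (and ends) in one fixed part. By part (b) the element $t\in G$ interchanges $\Gamma_0$ and $\Gamma_1$, so it carries the $2s$-arcs beginning in $\Gamma_0$ bijectively onto those beginning in $\Gamma_1$. Hence $G$ is transitive on all $2s$-arcs as soon as $G^+$ is transitive on the $2s$-arcs beginning in $\Gamma_0$: given two arbitrary $2s$-arcs, one applies $t$ if needed so that both begin in $\Gamma_0$, and then finishes with an element of $G^+\leqslant G$.

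Next I would invoke part (c) with $r=s$ to set up the map $\Phi$ sending a $2s$-arc $(u_0,\dots,u_{2s})$ beginning in $\Gamma_0$ to the ordered pair $(P,Q)$ of $s$-arcs $P=(v_0,\dots,v_s)$ and $Q=(w_0,\dots,w_s)$ of $\Delta$ determined by $u_{2i}=(v_i,w_i,0)$ and $u_{2i+1}=(v_i,w_{i+1},1)$. Part (c) shows $\Phi$ is well defined, and $\Phi$ is injective because those formulas reconstruct the entire $2s$-arc from $(P,Q)$. The key computation is to check that $\Phi$ is $G^+$-equivariant for the product action $(x,y)\colon (P,Q)\mapsto (P^x,Q^y)$: an element $(x,y)\in X\times X$ sends $u_{2i}=(v_i,w_i,0)$ to $(v_i^x,w_i^y,0)$ and $u_{2i+1}=(v_i,w_{i+1},1)$ to $(v_i^x,w_{i+1}^y,1)$, so the image $2s$-arc corresponds under $\Phi$ exactly to $(P^x,Q^y)$.

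Finally, since $\Delta$ is $(X,s)$-arc-transitive, $X$ is transitive on the $s$-arcs of $\Delta$, whence $X\times X$ is transitive on ordered pairs of $s$-arcs. By the equivariance and injectivity of $\Phi$, this transitivity transports back to the $2s$-arcs beginning in $\Gamma_0$, giving transitivity of $G^+$ on those arcs and therefore transitivity of $G$ on all $2s$-arcs. I do not anticipate a genuine obstacle: the structural content is already packaged in part (c), and the only points needing care are the equivariance of $\Phi$ (matching the two coordinates of a vertex with the two separate $s$-arcs) and the reduction via $t$, both of which are short.
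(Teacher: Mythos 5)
Your argument for part (d) is correct and is essentially the paper's own: reduce, via part (b), to showing that $G^+$ is transitive on the $2s$-arcs beginning in $\Gamma_0$; use part (c) with $r=s$ to encode such a $2s$-arc as an ordered pair of $s$-arcs of $\Delta$; and apply $(X,s)$-arc-transitivity in each coordinate. Your equivariant-injection formalism ($\Phi$) is just a repackaging of the paper's direct computation that a suitable $(x,y)\in X\times X$ carries one $2s$-arc to the other, and the logic is sound: injectivity plus equivariance plus transitivity of $X\times X$ on ordered pairs of $s$-arcs does pull back to transitivity on the $2s$-arcs beginning in $\Gamma_0$, since $G^+$ preserves that set.

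However, as a proof of the proposition as stated there is a genuine gap: you prove only part (d), while parts (a), (b) and (c) are taken as given --- and they are precisely the inputs your argument consumes, so the proof is conditional until they are supplied. None of them is automatic. For (b), one must actually verify that $t=(g,1)\tau$ lies in $\Aut(\Gamma)$; this is a computation using the asymmetric definition of $A\Gamma$ and the fact that $g\in\Aut(\Delta)$ (for instance, an arc $\bigl((v,w,1),(v',w,0)\bigr)$ maps under $t$ to $\bigl((w,v^g,0),(w,v'^g,1)\bigr)$, which is an arc because $(v^g,v'^g)\in A\Delta$), together with the identities $t^2=(g,g)\in G^+$ and $(x,y)^\tau=(y,x)$. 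For (c), the canonical coordinates of a $2r$-arc require an induction on $r$ from the definition of $A\Gamma$. For (a), one needs that each bipartite half of $\Gamma$ is the direct product $\Delta\times\Delta$ of Construction~\ref{ex:PA} and the connectivity criterion ($\Gamma$ connected if and only if $\Delta$ is connected and non-bipartite), which in the paper rests on standard facts about direct products of non-bipartite digraphs. These parts constitute the bulk of the paper's proof; part (d), which you prove, is its shortest step.
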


\begin{proof}
(a)  It follows from the definition of $A
\Gamma$ in Construction~\ref{con1}(a) that $\Gamma$ is a digraph and is bipartite, and in view of our comment above, each vertex of $\Gamma$ has  out-valency and in-valency equal to $|\Delta(v)|=|\Delta^*(v)|=k$. Next we note that, for $\delta\in\{0,1\}$,  the bipartite half $(\Gamma_\delta,A_\delta)$, as defined in Definition~\ref{d:biphalf}, has arc set $A_\delta=\{((v,w,\delta), (v',w',\delta))| (v,v'), (w,w')\in A\Delta\}$, and hence is isomorphic to the digraph defined in Construction~\ref{ex:PA} (with $m=2$). From the discussion in Construction~\ref{ex:PA}, $G^+$ acts vertex-transitively and $s$-arc transitively on $(\Gamma_\delta,A_\delta)$. Moreover, the underlying undirected graph of $(\Gamma_\delta,A_\delta)$ is connected if and only if the digraph $\Delta$ is connected and not bipartite (see \cite[Section 6.3]{GR}). Therefore also the digraph $(\Gamma_\delta,A_\delta)$ is connected if and only if $\Delta$ is connected and not bipartite, see \cite[Theorem 1.10]{Cameron}. If this is the case, then for every pair of vertices in $\Gamma_0$ there is a directed path in $\Gamma$ of even length from the first to the second, and it follows from the definition of $A\Gamma$ that also $\Gamma$ is connected.   Thus part (a) is proved.

\smallskip
(b)     We first show that \(G^+\) preserves \(A\Gamma\). Let \((x,y)\in G^+\) and \(((v,w,\delta),(v',w',\delta'))\in A\Gamma\), so $(\delta,\delta')=(0,1)$ or $(1,0)$.  By the definition of the action of $(x,y)$, we have
    \[
    ((v,w,\delta),(v',w',\delta')^{(x,y)} = 
        ((v^x,w^y,\delta),(v'^x,w'^y,\delta')).   
    \]
   If $(\delta,\delta')=(0,1)$  then \(v=v'\) and \((w,w')\in A\Delta\), and so also  \(v^x=v'^x\) and \((w^y,w'^y)\in A\Delta\) since $y\in\Aut(\Delta)$; hence $((v,w,\delta),(v',w',\delta')^{(x,y)}\in A\Gamma$. Similarly, if $(\delta,\delta')=(1,0)$, then  \(w=w'\) and \((v,v')\in A\Delta\), and also  \(w^y=w'^y\) and \((v^x,v'^x)\in A\Delta\), since $x\in\Aut(\Delta)$; hence $((v,w,\delta),(v',w',\delta')^{(x,y)}\in A\Gamma$.  Thus \(G^+\) preserves \(A\Gamma\) and \(G^+\leqslant \Aut(\Gamma)\). 
    Since $X$ is transitive on $V\Delta$ it follows that the orbits of $G^+=X\times X$ in $V\Gamma$ are $\Gamma_0$ and $\Gamma_1$.

   Now  \(t: (v,w,\delta)\to (w,v^g,\delta+1)\) and consequently \(t^2:(v,w,\delta)\to (v^g,w^g,\delta)\), for each vertex $(v,w,\delta)$. Hence $t^2=(g,g)\in G^+$, and $t$ interchanges $\Gamma_0$ and $\Gamma_1$ which implies that $G$ is transitive on $V\Gamma$. 
   Also $(g,1)\in G^+$ and  $(x,y)^\tau =(y,x)$ for all $(x,y)\in G^+$, and therefore $G=\langle G^+, t\rangle = G^+\cdot \langle \tau\rangle$ and $|G:G^+|=2$.  It remains to show that  $t\in\Aut(\Gamma)$. Consider an arc  $a:= \big((v,w,\delta),(v',w',\delta')\big)$ of $\Gamma$, and note that $\delta\ne\delta'$. 
            Suppose first that $\delta=0$. Then $a:= \big((v,w,0),(v, w',1)\big)$ and $(w,w')\in A\Delta$. Now $a^t = \big((w,v^g,1),(w', v^g,0)\big)$ by the definition of $t$, and this is an arc by the definition of $A\Gamma$.
            Now suppose that $\delta=1$. Then $a:= \big((v,w,1),(v', w,0)\big)$ and $(v,v')\in A\Delta$. We have $a^t = \big((w,v^g,0),(w, v'^g,1)\big)$ by the definition of $t$, and, noting that $(v^g,v'^g)\in A\Delta$ since $g\in\Aut(\Delta)$, we conclude again that $a^t$ is an arc of $\Gamma$ by the definition of $A\Gamma$. Thus $t\in\Aut(\Gamma)$ and part (b) is proved.

\smallskip
(c) We prove part (c) by induction on $r$. If $r=1$ then part (c) follows directly from the definition of $A\Gamma$. So suppose that $r\geq2$, that   $(u_0,\dots, u_{2r})$ is a $2r$-arc of $\Gamma$ with $u_0\in \Gamma_0$, and suppose inductively that $u_0,\dots, u_{2r-2}$ are as in part (c) with  $(v_0,v_1,\dots,v_{r-1})$ and $(w_0,w_1,\dots,w_{r-1})$ being $(r-1)$-arcs of $\Delta$. Then by  the definition of $A\Gamma$, since $u_{2r-2}=(v_{r-1}, w_{r-1}, 0)$, we have 
$u_{2r-1}=(v_{r-1}, w_{r}, 1)$ and $u_{2r}=(v_{r}, w_{r}, 0)$ with $(w_{r-1},w_r), (v_{r-1},v_r)\in A\Delta$. Hence $(v_0,v_1,\dots,v_{r})$ and $(w_0,w_1,\dots,w_{r})$ are $r$-arcs of $\Delta$, and part (c) is proved by induction.

\smallskip
(d) By part (b), $\Gamma$ is $G$-vertex-transitive, and hence to prove that  $\Gamma$ is $(G, 2s)$-arc-transitive it is sufficient to prove that $G^+$ is transitive on the set of $2s$-arcs $(u_0,\dots, u_{2s})$ of $\Gamma$  with $u_0\in \Gamma_0$. Let  $(u_0',\dots, u_{2s}')$ be another $2s$-arc of $\Gamma$ with $u_0'\in \Gamma_0$.  By part (c) we may assume that  $(u_0,\dots, u_{2s})$ is as given there, and that each $u_{2i}'=(v_{i}', w_{i}', 0)$ and $u_{2i+1}'=(v_{i}', w_{i+1}', 1)$ with  $(v_0',v_1',\dots,v_{s}')$ and $(w_0',w_1',\dots,w_{s}')$ being $s$-arcs of $\Delta$. Since $\Delta$ is $(X,s)$-arc-transitive, there exist $x,y\in X$ such that  $(v_0,v_1,\dots,v_{s})^x=(v_0',v_1',\dots,v_{s}')$ and $(w_0,w_1,\dots,w_{s})^y=(w_0',w_1',\dots,w_{s}')$. Then $(x,y)\in G^+$ and $(u_0,\dots, u_{2s})^{(x,y)}= (u_0',\dots, u_{2s}')$, proving part (d). 
\end{proof}

{\color{blue}
}

Next we find sufficient conditions for Construction~\ref{con1} to produce bi-quasiprimitive digraphs.

\begin{proposition}\label{cor:biquasi}
    Let $\Delta, X, s, \Gamma, G$ be as in Construction~\ref{con1}, 
   suppose that \(\Delta\) is connected and non-bipartite, and that \(X\) has a unique minimal normal subgroup \(N\) and \(N\) is nonabelian. Then 
   \begin{enumerate}
       \item[(a)] $G$ has a unique minimal normal subgroup $L$ and \(L\cong N\times N\). 
       \item[(b)] If also  $\Delta$ is an $X$-vertex-quasiprimitive $(X,s)$-arc-transitive digraph, then \(\Gamma\) is a $G$-vertex-bi-quasiprimitive $(G,2s)$-arc-transitive digraph. 
   \end{enumerate} 
\end{proposition}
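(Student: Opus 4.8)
The plan is to treat the two parts separately, noting first that the $(G,2s)$-arc-transitivity asserted in part (b) is already supplied by Proposition~\ref{thm: bi-quasi con}(d); so the real content is the group-theoretic structure in part (a) together with the orbit count needed for bi-quasiprimitivity in part (b). Throughout I would work inside $G=\langle G^+,t\rangle\leqslant X\wr\Sym(2)$ with $G^+=X\times X$ of index $2$ and $t^2=(g,g)\in G^+$, and I would repeatedly use the conjugation formula $(x,y)^t=(y,x^g)$, which follows from $t=(g,1)\tau$ and $(x,y)^\tau=(y,x)$ as computed in the proof of Proposition~\ref{thm: bi-quasi con}.

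For part (a), set $L:=N\times N\leqslant G^+$. Since $N$ is nonabelian and is the unique minimal normal subgroup of $X$, it is characteristically simple, say $N\cong T^d$ with $T$ nonabelian simple and $X$ transitive on the $d$ simple direct factors of $N$. First I would check $L\unlhd G$: it is normal in $G^+=X\times X$ because $N\unlhd X$, and it is $t$-invariant because $(n_1,n_2)^t=(n_2,n_1^{\,g})$ maps $N\times N$ onto itself (indeed $t$ interchanges $N\times 1$ and $1\times N$). Next, $G$ permutes the $2d$ simple direct factors of $L$ transitively, since each copy of $X$ is transitive on the $d$ factors of its block and $t$ interchanges the two blocks; because a normal subgroup of a direct product of nonabelian simple groups is a subproduct of the factors, the only $G$-invariant such subproducts are $1$ and $L$, so $L$ is minimal normal in $G$. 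For uniqueness, I would compute $C_G(L)$: since $N$ is nonabelian and the unique minimal normal subgroup of $X$, any nontrivial normal subgroup of $X$ contains $N$, forcing $C_X(N)=1$ (otherwise $N\leqslant C_X(N)$ and $N$ would be abelian); hence $C_{G^+}(L)=C_X(N)\times C_X(N)=1$, and a direct check using $(n_1,n_2)^{(x,y)t}=(n_2^{\,y},n_1^{\,xg})$ shows no element of $G\setminus G^+$ can centralise all of $L$, so $C_G(L)=1$. Any minimal normal subgroup $L_0\neq L$ would satisfy $L_0\cap L=1$ (as both are minimal normal and distinct) and hence $L_0\leqslant C_G(L)=1$, a contradiction; thus $L\cong N\times N$ is the unique minimal normal subgroup of $G$.

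For part (b), the arc-transitivity being free, it remains to verify bi-quasiprimitivity. The normal subgroup $G^+$ has exactly the two orbits $\Gamma_0,\Gamma_1$, which have equal length $|V\Delta|^2$, so a normal subgroup with exactly two orbits exists. By part (a) every nontrivial normal subgroup of $G$ contains the unique minimal normal subgroup $L$, so its orbits are unions of $L$-orbits; it therefore suffices to show $L=N\times N$ has at most two orbits on $V\Gamma$. Now the quasiprimitivity of $X$ on $V\Delta$ forces the nontrivial normal subgroup $N$ of $X$ to be transitive on $V\Delta$, whence $N\times N$ is transitive on $\Gamma_\delta\cong V\Delta\times V\Delta$ (product action) for each $\delta\in\{0,1\}$; thus $L$ has exactly the two equal-length orbits $\Gamma_0,\Gamma_1$. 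Consequently every nontrivial normal subgroup of $G$ has at most two equal-length orbits on $V\Gamma$ (either $\Gamma_0,\Gamma_1$ if contained in $G^+$, or the single orbit $V\Gamma$ otherwise), and $G$ is bi-quasiprimitive.

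I expect the main obstacle to be the uniqueness half of part (a): showing that $L$ is not merely a minimal normal subgroup but the only one. This reduces to the centralizer computation $C_G(L)=1$, and the delicate point is ruling out centralising elements in the nontrivial coset $G\setminus G^+$, where one must use the explicit twisted-swap action of $t$ rather than a symmetry argument. The remaining steps are standard structural facts about direct products of nonabelian simple groups and product actions.
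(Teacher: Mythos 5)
Your proof is correct, and for part (a) it takes a genuinely different route from the paper. You argue structurally: minimality of $L=N\times N$ follows because $G$ permutes the $2d$ simple direct factors of $L$ transitively and every normal subgroup of a direct product of nonabelian simple groups is a subproduct; uniqueness follows from the centraliser computation $C_G(L)=1$ (including the explicit check on the coset $G\setminus G^+$) together with the fact that distinct minimal normal subgroups centralise each other. The paper instead runs a single bare-hands argument showing that \emph{every} nontrivial normal subgroup $M\unlhd G$ contains $L$: first $M\cap G^+\neq 1$ (otherwise $G=M\times G^+$ with $|M|=2$, ruled out by an explicit commutation computation with $(x,1)$ and $(a,b)\tau$); then, using that $X$ has trivial centre (a consequence of $N$ being the unique minimal normal subgroup and nonabelian), a commutator trick $(a,b)^{(c,1)}(a,b)^{-1}=([c,a^{-1}],1)$ produces a nontrivial element of $M\cap X_1$, whence $N_1\leqslant M$ since $M\cap X_1\unlhd X_1$, and conjugating by $\tau$ gives $N_2\leqslant M$, so $L\leqslant M$. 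The paper's approach is more elementary and self-contained (no appeal to the subproduct lemma), and it hands part (b) the exact statement it needs -- every nontrivial normal subgroup contains $L$ and hence has at most two orbits -- whereas your route recovers that statement via the standard fact that in a finite group every nontrivial normal subgroup contains a minimal normal one, which you correctly note. Your treatment of part (b) then matches the paper's: $N$ transitive on $V\Delta$ by quasiprimitivity, so $L$ has exactly the two equal-length orbits $\Gamma_0,\Gamma_1$, and the $(G,2s)$-arc-transitivity comes from Proposition~\ref{thm: bi-quasi con}(d).
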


\begin{proof}
    Now $G^+ = X_1\times X_2$ with $X_1\cong X_2\cong X$, so setting $N_i=\Soc(X_i)\cong N$, for each $i$,  \(L:= N_1 \times N_2\cong N\times N\) and $L$ is characteristic in \(G^+\). This together with the fact that \(G^+\) is normal in \(G\) implies that \(L\) is also normal in \(G\). Let \(M\) be an arbitrary non-trivial normal subgroup of \(G\). 
    
    \medskip\noindent
    \textit{Claim:}\quad \(M\cap G^+\neq 1\) and \(M\cap X_1\neq 1\).
    
    \smallskip
    Suppose to the contrary that \(M\cap G^+=1\). Then $\langle M, G^+\rangle \cong M\times G^+$ and since \(M\neq 1\) and \(|G:G^+|=2\), it follows that $|M|=2$ and $G=M\times G^+$. Thus  \(M=\langle (a,b)\tau\rangle\cong Z_2\), for some \(a,b\in X\), and each nontrivial $(x,1)\in X_1$ commutes with $(a,b)\tau$, so 
    \[
    (xa, b)\tau = (x,1)(a,b)\tau = (a,b)\tau (x,1) = (a,b)(1,x)\tau = (a,bx)\tau
    \]
    which implies that $x=1$, a contradiction. Therefore \(M\cap G^+\neq 1\), so $M\cap G^+$ contains some \((a,b)\neq (1,1)\). Since also \((1,1)\neq (a,b)^{(g,1)\tau}=(b,a^g)\in M\cap G^+\), we may assume without loss of generality that \(a\neq1\). 
    Since \(N\) is the unique minimal normal subgroup of \(X\) and \(N\) is nonabelian, it follows that $X$ has trivial centre. Hence there exists \(c\in X\) such that \([c,a^{-1}]\neq 1\). Further, since \(M\cap G^+ \unlhd G^+\), also \((a,b)^{(c,1)}\in M\cap G^+\) and therefore
    \[
    (c^{-1},1)(a,b)(c,1)(a^{-1},b^{-1})=([c,a^{-1}],1)\in M\cap G^+
    \]
    and in fact \(([c,a^{-1}],1)\in M\cap X_1\neq 1\),  and the claim is proved.

\smallskip
    
      Since \(N_1\) is the unique minimal normal subgroup of \(X_1\) and \(M\cap X_1\neq 1\), it follows that \(M\cap X_1\geqslant N_1\). Further, since \(M\) is normalised by \(\tau\), also  \(M\cap X_2 = (M\cap X_1)^{\tau}\geqslant N_1^\tau =  N_2\), and therefore \(L\leqslant M\). It follows that \(L\) is the unique minimal normal subgroup of \(G\). Thus part (a) is proved.

      Now suppose also that $X$ is quasiprimitive on $V\Delta$. Then $N$ is transitive on $V\Delta$, and hence $L$ has exactly two orbits in $V\Gamma$, namely $\Gamma_0$ and $\Gamma_1$. Hence \(\Gamma\) is $G$-vertex-bi-quasiprimitive. Part (b) now follows from Proposition~\ref{thm: bi-quasi con}.
\end{proof}

\medskip
In Section~\ref{Sec:Ex} we apply Proposition~\ref{cor:biquasi} to give examples of  $G$-vertex-bi-quasiprimitive $(G,2s)$-arc-transitive digraphs for arbitrarily large $s$, such that Lemma~\ref{l:i-or-iia}(b) holds for the $G^+$-action on a bipartite half (see also Remark~\ref{r:Biquasi}(d)). We finish this section by applying Proposition~\ref{cor:biquasi} to construct  $G$-vertex-bi-quasiprimitive $(G,4)$-arc-transitive digraphs such that the bipartite halves are products (as in Construction~\ref{ex:PA}) of two copies of an \((X,2)\)-arc-transitive \(X\)-vertex-primitive digraph of type \(SD\). These digraphs of type \(SD\) were constructed in \cite[Construction 3.1]{quasi}, and some  details about them are given in Remark~\ref{r:sd}. In particular, since each such digraph $\Delta$ is $X$-vertex-primitive, each nontrivial normal subgroup of $X$ is vertex-transitive. Hence $\Delta$ is connected and not bipartite, and also has no cyclic normal quotients so $(\Delta, X, 2)\in\DD$.

\begin{proposition}\label{p:SD}
    Let $(\Delta, X, 2)\in\DD$ with $\Delta$ as in  \cite[Construction 3.1]{quasi} and $X=\Aut(\Delta)$ vertex-primitive of type \(SD\), and let $\Gamma, G$ be the digraph and group given by Construction~\ref{con1}. Then $\Gamma$ is a $(G,4)$-arc-transitive $G$-vertex-bi-quasiprimitive digraph, and is not $(G,5)$-arc-transitive. Also $(\Gamma, G, 4)\in\DD$,  each bipartite half of  $\Gamma$ is a direct product $\Gamma'=\Delta\times\Delta$ as in Construction~\ref{ex:PA}, and the normal quotient operation in Proposition~\ref{Biquasi1} applied to $(\Gamma', X\times X, 2)\in\DD$ produces $(\Delta, X, 2)\in\DD$.  
\end{proposition}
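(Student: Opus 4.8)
The plan is to assemble the five assertions from the machinery already in place, doing genuine work only for the non-$5$-arc-transitivity and for the final normal-quotient identification. First I would record the relevant facts about the input from Remark~\ref{r:sd}: since $X=\Aut(\Delta)$ is of type SD with $\Delta$ as in \cite[Construction 3.1]{quasi}, the group $X$ has a unique minimal normal subgroup $N=\Soc(X)=T^k$, which is nonabelian, and $\Delta$ is $(X,2)$-arc-transitive but \emph{not} $(X,3)$-arc-transitive; in particular $\Delta$ is not a directed cycle, so has valency $k\geqslant2$ and Construction~\ref{con1} applies with $s=2$. That $\Delta$ is connected and non-bipartite was noted before the proposition. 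Given these hypotheses, Proposition~\ref{thm: bi-quasi con}(d) gives at once that $\Gamma$ is $(G,4)$-arc-transitive, Proposition~\ref{cor:biquasi}(b) gives that $\Gamma$ is $G$-vertex-bi-quasiprimitive, and Proposition~\ref{thm: bi-quasi con}(a) gives that each bipartite half equals $\Delta\times\Delta$.

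Next I would show $\Gamma$ is not $(G,5)$-arc-transitive. Combining Proposition~\ref{thm: bi-quasi con}(c) (applied to the $4$-arc $u_0,\dots,u_4$) with one further application of the adjacency rule of Construction~\ref{con1}, every $5$-arc of $\Gamma$ beginning in $\Gamma_0$ is determined by a $2$-arc $(v_0,v_1,v_2)$ and a $3$-arc $(w_0,w_1,w_2,w_3)$ of $\Delta$, on which $G^+=X\times X$ acts coordinatewise. Hence $G^+$ is transitive on these $5$-arcs if and only if $X$ is simultaneously transitive on the $2$-arcs and on the $3$-arcs of $\Delta$. Since $t$ interchanges the $5$-arcs beginning in $\Gamma_0$ with those beginning in $\Gamma_1$, transitivity of $G$ on all $5$-arcs would force transitivity of $G^+$ on the $\Gamma_0$-arcs, and hence $(X,3)$-arc-transitivity of $\Delta$, contradicting Remark~\ref{r:sd}.

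For $(\Gamma,G,4)\in\DD$ I would use that $\Gamma$ is connected (Proposition~\ref{thm: bi-quasi con}(a), as $\Delta$ is connected and non-bipartite) and $(G,4)$-arc-transitive, together with bi-quasiprimitivity of $G$: any $G$-normal quotient $\C_r^\to$ with $r\geqslant3$ would arise from a nontrivial normal subgroup having at least three orbits on $V\Gamma$ (or, for the trivial subgroup, would make $\Gamma$ itself a directed cycle, impossible as $\Gamma$ has valency $k\geqslant2$), and bi-quasiprimitivity forbids a nontrivial normal subgroup from having more than two orbits. Thus $(\Gamma,G,4)\in\DD$, and then Remark~\ref{r:Biquasi}(b) (with $s=4$ and $H=X\times X$) yields $(\Gamma',X\times X,2)\in\DD$ for the bipartite half $\Gamma'=\Delta\times\Delta$.

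The substantive step is tracing the reduction of Proposition~\ref{Biquasi1} on $\Gamma'$. Here $H=X\times X$ is the group induced on the part $\Gamma_0\cong\Delta\times\Delta$; it is not quasiprimitive (it has the intransitive minimal normal subgroup $N\times 1$), so we are in case Remark~\ref{r:Biquasi}(d). I would take $R=N\times 1$, a minimal normal subgroup of $H$ that is intransitive on $\Delta\times\Delta$, noting $R^\varphi=1\times N$ (so $M=R\times R^\varphi=N\times N=\Soc(H)$ is transitive and $\Diag_\varphi(M\times M)=\Soc(G)=L$, matching Proposition~\ref{cor:biquasi}(a)). Then $C:=C_H(R)=C_X(N)\times X=1\times X$, where $C_X(N)=1$ because $N$ is the unique, nonabelian minimal normal subgroup of $X$. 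The $C$-orbits on $V\Delta\times V\Delta$ are the sets $\{v\}\times V\Delta$, so $V(\Gamma')_C$ is identified with $V\Delta$, and the adjacency rule for the direct product shows $(\Gamma')_C\cong\Delta$ with $\overline{H}=H/C\cong X$ acting as $X$ on $\Delta$; since $\lfloor 4/2\rfloor=2$, this recovers precisely $(\Delta,X,2)\in\DD$. The main obstacle is exactly this bookkeeping: verifying that we are in case~(d) and not~(c), matching $R,M,N,C$ to the hypotheses of Proposition~\ref{Biquasi1}, and checking $C_X(N)=1$ so that the quotient collapses one direct factor and reproduces $\Delta$ rather than some further quotient of it.
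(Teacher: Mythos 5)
Your proposal is correct and takes essentially the same route as the paper's proof: Propositions~\ref{thm: bi-quasi con} and~\ref{cor:biquasi} for the $(G,4)$-arc-transitivity, bi-quasiprimitivity and identification of the bipartite halves; the coordinatewise decomposition of $5$-arcs (a $2$-arc in one factor, a $3$-arc in the other) played off against the failure of $(X,3)$-arc-transitivity from \cite[Theorem 1.2]{quasi}; and the reduction of Proposition~\ref{Biquasi1} with $R=N\times 1$, $C=C_H(R)=1\times X$ collapsing $(\Gamma')_C$ to $\Delta$. If anything, your bookkeeping in the last step (checking $C_X(N)=1$, identifying the $C$-orbits and the quotient adjacency) and your use of Proposition~\ref{p:bip}(c) to get $(\Gamma',X\times X,2)\in\DD$ make explicit some details that the paper's proof asserts without verification.
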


\begin{proof}
    Since $\Delta$ is connected and not bipartite, it follows from Proposition~\ref{thm: bi-quasi con}(a) that $\Gamma$ is connected, and by Proposition~\ref{cor:biquasi},  $\Gamma$ is a  $G$-vertex-bi-quasiprimitive $(G,4)$-arc-transitive digraph, and $G$ has a unique minimal normal subgroup $N\times N$, where $N=\Soc(X)$. Since $\Gamma$ is $G$-vertex-bi-quasiprimitive, it follows  that $N\times N$ has exactly two orbits in $V\Gamma$.  In particular $\Gamma$ has no $G$-normal quotient $\C_r^\to$ for any $r\geqslant3$, and therefore $(\Gamma, G, 4)\in\DD$. By Proposition~\ref{thm: bi-quasi con}(a), each bipartite half of $\Gamma$ is the direct product  $\Gamma'=\Delta\times\Delta$ as in Construction~\ref{ex:PA} admitting the group $G^+=X\times X$. Since $G^+$ has precisely two minimal normal subgroups, each of which is isomorphic to $\Soc(X)$ and is vertex-intransitive on $\Gamma'$ (Proposition~\ref{cor:biquasi}(a)), it follows that $(\Gamma', X\times X, 2)\in\DD$ and Remark~\ref{r:Biquasi}(d) holds for $G$. The normal quotient $(\Gamma')_C$ constructed in Proposition~\ref{Biquasi1} is isomorphic to $\Delta$ and admits the vertex-primitive group $X$ of type \(SD\), so this normal quotient operation produces $(\Delta, X, 2)\in\DD$.  
    
    Finally suppose that $\Gamma$ is $(G,5)$-arc-transitive. Since $\Gamma$ is $G$-vertex-transitive, this property holds if and only if, for $u_0=(v_0,w_0,0)\in\Gamma_0$, $G_{u_0}$ is transitive on $5$-arcs of the form $\gamma:=(u_0,u_1,\dots, u_5)$. Given such a $5$-arc  we may extend it to a $6$-arc $\gamma^+:=(u_0,u_1,\dots, u_5, u_6)$, and by Proposition~\ref{thm: bi-quasi con}(c),  $u_{2i}=(v_i,w_i,0)$ (for $0\leq i\leq 3$) and $u_{2i+1}=(v_i,w_{i+1},1)$ (for $0\leq i\leq 2$) such that $(v_0,v_1,v_2,v_3)$ and $(w_0,w_1,w_2,w_3)$ are $3$-arcs of $\Delta$.    Similarly let $(\gamma')^+:=(u_0',u_1',\dots, u_6')$ be a second such $6$-arc of $\Gamma$ with $u_0'=u_0$ and let $\gamma':=(u_0',u_1',\dots, u_5')$, so  $u_{2i}'=(v_i',w_i',0)$ (for $0\leq i\leq 3$) and $u_{2i+1}'=(v_i',w_{i+1}',1)$ (for $0\leq i\leq 2$) such that $(v_0',v_1',v_2',v_3')$ and $(w_0',w_1',w_2',w_3')$ are $3$-arcs of $\Delta$. Since $G_{u_0}$ is transitive on $5$-arcs with initial vertex $u_0$, there exists an element $(x,y)\in G_{u_0}<G^+=X\times X$ that maps $\gamma$ to $\gamma'$. It follows that $u_i^{(x,y)}=u_i'$ for each $i=0,\dots,5$ and hence in particular that $(w_0,w_1,w_2,w_3)^y= (w_0',w_1',w_2',w_3')$. Since each $3$-arc $(w_0',w_1',w_2',w_3')$ of $\Delta$ with $w_0'=w_0$ arises in these $5$-arcs of $\Gamma$, we conclude that $X_{w_0}$ is transitive on the $3$-arcs of $\Delta$ with initial vertex $w_0$. Then since $\Delta$ is $X$-vertex-transitive, the digraph $\Delta$ is $(X,3)$-arc-transitive, contradicting \cite[Theorem 1.2]{quasi}. Thus $\Gamma$  is not $(G,5)$-arc-transitive, and the proof is complete.
\end{proof}

\section{Proof of Theorem~\ref{t:main}}\label{proof of t:main}

Let $\DD$ be as in \eqref{e:family}, and suppose that  $(\Gamma, G, s):=(\Gamma^{(1)},G^{(1)},s^{(1)})\in\DD$ with $s\geqslant3$ if $\Gamma$ is not bipartite and $s\geqslant6$ if $\Gamma$ is bipartite, so $s=s^{(1)}$ is as in Table~\ref{tab:main}. We consider possible reduction steps as in Definition~\ref{d:reln}. Suppose that one of the triples considered is $(\Gamma^{(i)},G^{(i)},s^{(i)})\in\DD$, for some positive integer $i$.

\smallskip\noindent
\emph{Claim 1: If $G^{(i)}$ is vertex-quasiprimitive on  $V\Gamma^{(i)}$ and $s^{(i)}\geqslant3$, then either  $n=i$ with $G^{(i)}$ of type AS, or (ii) $n=i+1$ with $G^{(i)}$ of type PA and $s^{(n)} =s^{(i)}-1$. In either case $\Soc(G^{(n)})$ is a composition factor of $G^{(i)}$. }

In this case, $G^{(i)}$ has type AS or PA by Proposition~\ref{p:pa}(a). In the former case we complete the reduction sequence at this stage with $n=i$. Suppose then that $G^{(i)}$ has type PA. Then we apply Proposition~\ref{p:pa}(b) and make one more reduction step to the $G^{(i)}$-subnormal quotient digraph $\Gamma^{(i+1)}$ described there admitting the quasiprimitive almost simple component $G^{(i+1)}$ of $G^{(i)}$, obtaining the triple $(\Gamma^{(i+1)},G^{(i+1)},s^{(i+1)})\in\DD$ with  $s^{(i+1)}=s^{(i)}-1$. We complete the reduction sequence at this stage with $n=i+1$, and Claim 1 is proved.

\smallskip
We now complete the proof in the non-bipartite case.

\smallskip\noindent
\emph{Claim 2: The claims of Theorem~\ref{t:main} are valid if $\Gamma$ is not bipartite.} 

Suppose first that $G$ is biquasiprimitive on $V\Gamma$.

If $G=G^{(1)}$ is quasiprimitive on $V\Gamma$, then the assertions of  Theorem~\ref{t:main} follow from Claim 1 with $i=1$, giving $n\leqslant 2$, $s^{(n)} \geqslant s-1= s^{(1)}-1$, and $\Soc(G^{(n)})$ a composition factor of $G^{(1)}$. Assume now that $G$ is not quasiprimitive, so there exists a nontrivial vertex-intransitive normal subgroup $N$. Then $N$ has at least three orbits since $\Gamma$ is not bipartite. By Proposition~\ref{p:quot2}(c), there exists an intransitive normal subgroup $M$ such that $N\leqslant M\unlhd G$ with more than 3 vertex orbits and we can make a reduction with the $G$-normal quotient $\Gamma_M$ to    $(\Gamma^{(2)},G^{(2)},s^{(2)}) := (\Gamma_M,G/M,s)\in\DD$ with $G/M$ vertex-quasiprimitive.  The assertions of  Theorem~\ref{t:main} follow from Claim 1 with $i=2$, giving $n\leqslant 3$, $s^{(n)} \geqslant s^{(2)}-1=s-1$, and $\Soc(G^{(n)})$ a composition factor of $G^{(2)}=G/M$ and hence a composition factor of $G^{(1)}$. This proves Claim 2.

\smallskip
From now on we assume that $\Gamma$ is bipartite so $s\geqslant6$.

\smallskip\noindent
\emph{Claim 3: The claims of Theorem~\ref{t:main} are valid if $\Gamma$ is bipartite.} 

If $G$ is not biquasiprimitive on $V\Gamma$ then by  Proposition~\ref{p:quot2}(c), there exists a nontrivial normal subgroup $M$ of $G$ with at least three vertex orbits such that  we can make a reduction step with the $G$-normal quotient $\Gamma_M$ to    $(\Gamma^{(2)},G^{(2)},s^{(2)}) := (\Gamma_M,G/M,s)\in\DD$ with $G/M$ vertex-bi-quasiprimitive.  Thus we may assume that $(\Gamma^{(j)},G^{(j)},s^{(j)})\in\DD$ with $j\leqslant2$, $G^{(j)}$ vertex-bi-quasiprimitive and a quotient of $G^{(1)}$, and  $s^{(j)}=s\geqslant6$. We use the notation from Proposition~\ref{Biquasi0} and Remark~\ref{r:Biquasi} for the bipartite half $\Gamma'$ of $\Gamma^{(j)}$ with vertex set $\Delta$ and group $H$ where $(G^{(j)})^+=\Diag_\varphi(H\times H)$.  We make a reduction step using the bipartite half to $(\Gamma^{(j+1)},G^{(j+1)},s^{(j+1)}) := (\Gamma', H, \lfloor s/2\rfloor)\in\DD$. 
In the case where Remark~\ref{r:Biquasi}(c) holds, the group $H$ is quasiprimitive on $\Delta$ of type AS or PA, by Lemma~\ref{l:i-or-iia}.  
 The assertions of  Theorem~\ref{t:main} follow from Claim 1 with $i=j+1$, giving $n\leqslant j+2\leqslant 4$, $s^{(n)} \geqslant s^{(j+1)}-1=\lfloor s/2\rfloor -1$, and $\Soc(G^{(n)})$ a composition factor of $G^{(j+1)}=H$ and hence a composition factor of $G^{(1)}$.
Thus we may assume that Remark~\ref{r:Biquasi}(d) holds for $H$. Then we make a further reduction step using the $H$-normal quotient $(\Gamma^{(j+1)})_C$ in Proposition~\ref{Biquasi1} to $(\Gamma^{(j+2)},G^{(j+2)},s^{(j+2)}) :=((\Gamma^{(j+1)})_C,H/C,s^{(j+1)})\in\DD$ with $H/C$ vertex-quasiprimitive of type AS or PA. The assertions of  Theorem~\ref{t:main} follow from Claim 1 with $i=j+2$, giving $n\leqslant j+3\leqslant5$,  $s^{(n)} \geqslant s^{(j+2)}-1=\lfloor s/2\rfloor -1$, and $\Soc(G^{(n)})$ a composition factor of $G^{(j+2)}=H/C$ and hence a composition factor of $G^{(1)}$. This proves Claim 3, and completes the proof of Theorem~\ref{t:main}.

\section{Quasiprimitive and bi-quasiprimitive digraphs admitting $\Alt(n)$ or $\Sym(n)$}\label{Sec:Ex}

In this final section we explore some of the properties of the $s$-arc-transitive digraphs constructed in \cite{CLP} from finite alternating and symmetric groups. We show that some of these digraphs are vertex-quasiprimitive, while others are vertex-bi-quasiprimitive, and we explore how Construction~\ref{con1} can be applied in the former case. In fact several families of digraphs are constructed in \cite{CLP}, and we examine  those constructed in \cite[Proof of Theorem 1]{CLP}.  First we give the definition of this family in Construction~\ref{ex: alt and sym}, see \cite[pp 66-67]{CLP}.

\begin{construction}\label{ex: alt and sym}
Let $k, s, n$ be positive integers such that $k\geqslant2$ and $\gcd(n,s+1)=1$. 
\begin{enumerate}
\item[(a)] Define the permutations $x_1,\dots,x_{s+1},a\in \Sym((s+1)k+n)$ as follows.
\begin{align*}
    x_{i}&= ((i-1)k+1,(i-1)k+2,\ldots, ik) & \mbox{for  \(1\leqslant i\leqslant s+1\)}\\
    y_1&= (1,k+1,\ldots, sk+1,(s+1)k+1, (s+1)k+2, \ldots,(s+1)k+n) & \\
    y_i &= (i,k+i,\ldots,sk+i) & \mbox{for  \(2\leqslant i\leqslant k\)}\\
    a&= y_1\,y_2\,\dots \,y_k  & 
\end{align*}

\item[(b)] 
Let \(H=\langle x_1,\ldots,x_{s+1}\rangle\) and \(X=\langle H,a\rangle\).

\item[(c)] Let  \(\Delta=\Delta(k,s,n)\) be the digraph with \(V\Delta=\{Hx\mid x\in X\}\) and \(A\Delta=\{(Hx,Hax)\mid x \in X\}\). 
\end{enumerate}
    
\end{construction}

We note the following property proved in \cite{CLP}.

\begin{lemma}\cite[pp 66-67]{CLP}\label{ex: alt and sym 1}
    Suppose that Construction \ref{ex: alt and sym} holds. Then \(\Delta\) is a connected \(X\)-vertex-transitive \((X,s)\)-arc-transitive digraph of valency \(k\), and 
    \[
    X=\begin{cases}
    \Alt((s+1)k+n)&\text{if \(k\) is odd and \(s+n\) is even}\\
    \Sym((s+1)k+n)&\text{otherwise}
\end{cases}
\]
\end{lemma}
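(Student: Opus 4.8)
The plan is to realise $\Delta=\Delta(k,s,n)$ as the coset digraph $\Cos(X,H,HaH)$ and to extract everything from the subgroup structure of $X$, the key input being the \emph{shift relation} $a^{-1}x_ia=x_{i+1}$ for $1\le i\le s$, which one checks by computing directly how $a$ conjugates the disjoint $k$-cycles $x_i$ (each block of size $k$ is carried to the next, and the top block wraps around via the cycle $y_1$). Right multiplication by $X$ on the coset space $\{Hx\}$ preserves the arc set $\{(Hx,Hax)\}$ and is transitive, so $X\le\Aut(\Delta)$ acts vertex-transitively; connectivity follows because $X=\langle H,a\rangle$ and $X$ is finite (so the subsemigroup generated by $HaH$ is all of $X$). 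The out-neighbours of $Hx$ are $\{Hahx\mid h\in H\}$, so the out-valency is $|H:H\cap H^{a}|$; since $H=\langle x_1\rangle\times\cdots\times\langle x_{s+1}\rangle\cong Z_k^{\,s+1}$ and a support computation gives $H\cap H^{a}=\langle x_2,\dots,x_{s+1}\rangle$, this index equals $k$. The in-valency equals the out-valency since $\Delta$ is arc- and vertex-transitive, so $\Delta$ has valency $k$, and it is a genuine (asymmetric) digraph because every element of $H$ preserves each block setwise while $a$ shifts the blocks cyclically, whence $a^{-1}\notin HaH$.

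For the $s$-arc-transitivity I would argue by a stabiliser count. As $\Delta$ has no directed $2$-cycles, each vertex is the initial vertex of exactly $k^{s}$ $s$-arcs, so there are $|X:H|\cdot k^{s}=|X|/k$ $s$-arcs in total. Consider the distinguished $s$-arc $P=(H,Ha,Ha^{2},\dots,Ha^{s})$; this is a genuine $s$-arc since $a^{i}\notin H$ for $1\le i\le s$. The stabiliser of the vertex $Ha^{i}$ is $H^{a^{i}}$, so the pointwise stabiliser of $P$ is $\bigcap_{i=0}^{s}H^{a^{i}}$, and by orbit--stabiliser $X$ is transitive on $s$-arcs if and only if this intersection has order $k$. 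The crux, and what I expect to be the main obstacle, is the identity
\[
\bigcap_{i=0}^{j}H^{a^{i}}=\langle x_{j+1},x_{j+2},\dots,x_{s+1}\rangle\qquad(0\le j\le s),
\]
proved by induction on $j$ from the shift relation. The subtlety is that $H^{a^{i}}$ also contains ``wrapped'' generators $x_m^{\,a^{i}}$ with $m+i>s+1$, whose supports involve the point $(s+1)k+1$ and the $n$ extra points and which overlap the blocks of the clean generators; one must verify by a careful support and cycle-type analysis that these contribute nothing to the intersection. It is precisely here (and in the primitivity step below) that the hypothesis $\gcd(n,s+1)=1$ is used, to control the cycle structure of the relevant powers of $a$. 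Taking $j=s$ gives $\bigl|\bigcap_{i=0}^{s}H^{a^{i}}\bigr|=|\langle x_{s+1}\rangle|=k$, establishing $(X,s)$-arc-transitivity.

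It remains to identify $X$ as a permutation group on the $m:=(s+1)k+n$ points. First $X$ is transitive: from point $1$ the cycle $x_1$ reaches every point $2,\dots,k$ of the bottom block, and $a$ then sweeps through each column (and through the $n$ extra points via $y_1$), so already $\langle x_1,a\rangle$ is transitive. Next $X$ contains a $3$-cycle: with $b:=x_{s+1}^{\,a}=((s+1)k+1,2,3,\dots,k)\in X$, a direct computation gives $x_1b^{-1}=(1,\,(s+1)k+1,\,k)$. Granting primitivity of $X$ --- which I would derive from transitivity together with the fact that $\gcd(n,s+1)=1$ forces $a^{s+1}$ to be a single $(s+1+n)$-cycle --- Jordan's theorem gives $\Alt(m)\le X$. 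Finally one computes signs: each $x_i$ has sign $(-1)^{k-1}$, while $a=y_1y_2\cdots y_k$ has sign $(-1)^{s+n}\cdot((-1)^{s})^{k-1}=(-1)^{sk+n}$. Hence all generators are even exactly when $k$ is odd and $s+n$ is even, giving $X=\Alt(m)$ in that case and $X=\Sym(m)$ in every other case, as claimed.
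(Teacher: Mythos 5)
The paper itself does not prove this lemma (it is quoted from \cite[pp 66--67]{CLP}), so your proposal must stand on its own, and most of it does: the coset-digraph realisation, the shift relation $x_i^a=x_{i+1}$, the computation $H\cap H^a=\langle x_2,\dots,x_{s+1}\rangle$ giving valency $k$, the verification that $a^{-1}\notin HaH$, the orbit--stabiliser criterion for $s$-arc-transitivity, the intersection identity $\bigcap_{i=0}^{j}H^{a^{i}}=\langle x_{j+1},\dots,x_{s+1}\rangle$ (conjugating by $a^{-j}$ turns the potential offender into a power of $x_1^{a^{-1}}$, whose support contains the point $(s+1)k+n$ fixed by $H$, forcing that exponent to vanish mod $k$), the $3$-cycle $x_1b^{-1}$, and the sign computations are all correct.

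The genuine gap is the primitivity step, which is exactly where the whole weight of the hypothesis $\gcd(n,s+1)=1$ lies. You claim primitivity follows ``from transitivity together with the fact that $a^{s+1}$ is a single $(s+1+n)$-cycle''. That inference is false in general: a transitive group containing a single $l$-cycle with $l$ less than the degree can perfectly well be imprimitive, since the support of such a cycle can be a block or a union of blocks (e.g.\ a full cycle on one block of an imprimitive wreath product); note also that the paper's counterexample $k=s=2$, $n=3$, $X=S_3\wr A_3$ shows that your other ingredients (transitivity, a $3$-cycle, even $s$-arc-transitivity) all survive in an imprimitive group, so none of them can substitute for a real blocks argument. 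To close the gap one must play the cycle $c=a^{s+1}$, with support $S=\mathrm{supp}(y_1)$, against the generators $x_i$: a block not fixed setwise by $c$ contains no fixed point of $c$, hence lies inside $S$; while a block $B$ fixed setwise by $c$ and meeting $S$ satisfies $B\supseteq S$ (as $\langle c\rangle$ is transitive on $S$), and then each $x_i$ fixes a point of $B$, hence fixes $B$ setwise, so $B$ contains the $\langle x_1,\dots,x_{s+1},c\rangle$-orbit of $1$, which is everything. So the block $B$ containing $1$ would have to satisfy $B\subseteq S$; but if $|B|\geqslant 2$, pick $p\in B\setminus\{1\}$, note $\mathrm{supp}(x_1)\cap S=\{1\}$, so $x_1$ fixes $p$ and hence fixes $B$ setwise, giving $2=1^{x_1}\in B\subseteq S$, a contradiction since $2\notin S$. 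A secondary inaccuracy: you assert the gcd hypothesis is also needed for the intersection identity; it is not --- that identity, and hence $(X,s)$-arc-transitivity itself, holds without it, which is consistent with the paper's remark that only the identification of $X$ as $\Alt((s+1)k+n)$ or $\Sym((s+1)k+n)$ can fail when $\gcd(n,s+1)>1$.
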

Although the graphs $\Delta(k,s,n)$ can be defined without the condition $\gcd(n,s+1)=1$, this condition is essential for the proof given in \cite[Proof of Theorem 1]{CLP} of Lemma~\ref{ex: alt and sym 1}. Moreover the conclusion of Lemma~\ref{ex: alt and sym 1} is not necessarily true if $\gcd(n,s+1)>1$. For example, if $k=s=2$ and $n=3$, then  $\gcd(n,s+1)=3$,  $(s+1)k+n=9$, and $X= S_3\wr A_3$, preserving the partition $\{1,2,7\mid 3,4,8\mid 5,6,9\}$. 
Next we determine when $\Delta$ is bipartite and when $X$ is vertex-quasiprimitive.

\begin{table}[]
    \centering
    \begin{tabular}{c|cccccc}
     Line  &\(k\)  &  \(n\)&\(s\)&\(X\)& $X$-action on \(V\Delta\)& $\Delta$ bipartite?\\  \hline
       1&even&even&even&\(\Sym\)&quasiprimitive&no\\
       2&even&odd&--&\(\Sym\)&quasiprimitive&no\\
       3& odd & odd&odd&\(\Alt\)& quasiprimitive & no \\
       4&odd & even & even &\(\Alt\)&quasiprimitive& no \\
       5& odd & odd &even &\(\Sym\)&bi-quasiprimitive& yes \\ \hline
    \end{tabular}
    \caption{Parities and properties for Proposition~\ref{ex: alt and sym 2}}
    \label{tab:placeholder}
\end{table}

\begin{proposition}\label{ex: alt and sym 2} 
   Suppose that Construction \ref{ex: alt and sym} holds. Then precisely one of the lines of Table~\ref{tab:placeholder} holds, where an entry `Alt' or `Sym' in column $X$ indicates that $X= \Alt((s+1)k+n)$ or $\Sym((s+1)k+n)$ respectively.  Moreover $(\Delta, X, s)\in\DD$, and:
       \begin{enumerate}
    \item \(X\) acts bi-quasiprimitively on \(V\Delta\) if and only if \(kn\) is odd and \(s\) is even; 
    \item for each even \(s\) and odd $k>1$, 
    \begin{enumerate}
        \item[(i)]     there are infinitely many integers \(n\) such that \((\Delta, X, s)\in\DD\) with $\Delta$ of valency $k$ and \(X\)-vertex-bi-quasiprimitive;
        \item[(ii)]     there are infinitely many integers \(n\) such that \((\Delta, X, s)\in\DD\) with $\Delta$ of valency $k$ and \(X\)-vertex-quasiprimitive.
    \end{enumerate}
    \end{enumerate}
\end{proposition}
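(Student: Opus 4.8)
The plan is to reduce the whole proposition to two parity computations inside the group $X$, together with the fact (Lemma~\ref{ex: alt and sym 1}) that $X$ is $\Alt(N)$ or $\Sym(N)$, where $N := (s+1)k + n \geq 5$; for such $N$ the only nontrivial normal subgroups of $X$ are $\Alt(N)$ and, when $X = \Sym(N)$, $\Sym(N)$ itself. First I would compute signs: each $x_i$ is a $k$-cycle, so $\mathrm{sgn}(x_i) = (-1)^{k-1}$ and hence $H = \langle x_1, \dots, x_{s+1}\rangle \leq \Alt(N)$ if and only if $k$ is odd; and $a = y_1 \cdots y_k$ is a product of disjoint cycles of lengths $s+1+n$ and $s+1$ (the latter $k-1$ times), so $\mathrm{sgn}(a) = (-1)^{(s+n)+s(k-1)} = (-1)^{sk+n}$, whence $a \in \Alt(N)$ if and only if $sk+n$ is even. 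I would then invoke the hypothesis $\gcd(n,s+1)=1$: if $s$ is odd then $s+1$ is even, forcing $n$ odd. This single observation removes the two parity patterns with $s$ odd and $n$ even, and with the values of $X$ supplied by Lemma~\ref{ex: alt and sym 1} it shows that exactly one line of Table~\ref{tab:placeholder} applies in every case.

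For the (bi-)quasiprimitivity and bipartiteness columns, the key point is that $\Alt(N)$ is transitive on $V\Delta = X/H$ if and only if $\Alt(N)\,H = X$, i.e. if and only if $H \not\leq \Alt(N)$, i.e. if and only if $k$ is even. Thus whenever $X = \Alt(N)$ (Lines 3 and 4) or $X = \Sym(N)$ with $k$ even (Lines 1 and 2), every nontrivial normal subgroup of $X$ is vertex-transitive, so $X$ is vertex-quasiprimitive; and $\Delta$ is not bipartite, because $\Alt(N)$ has no index-$2$ subgroup while for $X = \Sym(N)$ with $k$ even no index-$2$ subgroup contains $H$. In the one remaining case, $X = \Sym(N)$ with $k$ odd (Line 5), we have $H \leq \Alt(N)$, so $\Alt(N)$ has exactly two orbits on $X/H$, interchanged by any odd permutation and hence of equal length; this gives vertex-bi-quasiprimitivity. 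Moreover here $sk+n$ is odd (as $k$ odd, $s$ even, $n$ odd), so $a$ is odd and the map $Hx \mapsto \mathrm{sgn}(x)$ is a proper $2$-colouring with every arc $(Hx, Hax)$ reversing the colour; thus $\Delta$ is bipartite with the two $\Alt(N)$-orbits as its parts. Since bi-quasiprimitivity occurs precisely on Line 5, i.e. precisely when $k$ and $n$ are both odd and $s$ is even, this establishes part (1).

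To verify $(\Delta, X, s) \in \DD$, recall from Lemma~\ref{ex: alt and sym 1} that $\Delta$ is connected, $X$-vertex-transitive and $(X,s)$-arc-transitive, so (for $s \geq 2$) it remains only to exclude $X$-normal quotients $\C_r^\to$ with $r \geq 3$. By the analysis above, every nontrivial normal subgroup of $X$ is either transitive (one-vertex quotient) or has two orbits (the Line 5 case, a two-vertex quotient), while the trivial subgroup yields $\Delta$ itself, which has valency $k \geq 2$ and so is not a directed cycle. None of these quotients is $\C_r^\to$ for any $r \geq 3$, so the $\DD$-condition from \eqref{e:family} holds. Finally, for part (2) I would fix $s$ even and $k > 1$ odd, so that $s+1$ is odd; then Line 5 (bi-quasiprimitive) requires $n$ odd with $\gcd(n,s+1)=1$, realised by all odd primes $n > s+1$, while Line 4 (quasiprimitive) requires $n$ even with $\gcd(n,s+1)=1$, realised by all $n = 2p$ with $p > s+1$ prime. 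Each such family is infinite, proving (i) and (ii).

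The step I expect to require the most care is the Line 5 bookkeeping for $\DD$-membership: although $\Delta$ is bipartite, the two-vertex $\Alt(N)$-quotient has arcs running in both directions between its parts and so is not even a digraph in the sense of the paper, yet it must still be confirmed not to be a directed cycle $\C_r^\to$ with $r \geq 3$ (which it is not, having only two vertices). The correct reading of the $\gcd(n,s+1)=1$ hypothesis---to close off the otherwise-missing parity cases and make Table~\ref{tab:placeholder} exhaustive---and the sign computation for $a$ are the other points where a slip would be easy.
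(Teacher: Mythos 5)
Your proposal is correct and follows essentially the same approach as the paper: the parity of the $k$-cycles $x_i$ decides whether $H\leqslant \Alt(N)$ and hence whether $\Alt(N)$ is vertex-transitive, the hypothesis $\gcd(n,s+1)=1$ eliminates exactly the two missing parity patterns so that Table~\ref{tab:placeholder} is exhaustive, and part (2) is realised by odd primes $n>s+1$ and by $n=2p$ with $p>s+1$ prime, just as in the paper. The only cosmetic difference is that in Line 5 you establish bipartiteness via the explicit $2$-colouring $Hx\mapsto \mathrm{sgn}(x)$ (using $\mathrm{sgn}(a)=(-1)^{sk+n}=-1$), whereas the paper deduces it from connectedness together with the two equal $\Alt(N)$-orbits; both arguments are sound.
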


\begin{proof}
The entries in columns $k, n, s, X$ follow immediately from Lemma~\ref{ex: alt and sym 1}, noting  that, if $n$ is even then $s+1$ must be odd since $\gcd(n,s+1)=1$, and hence $s$ must also be even.   Next we consider the $X$-action on $V\Delta$. Note that $X$ is quasiprimitive on $V\Delta$ if and only if (the unique minimal normal subgroup of $X$)  $\Alt((s+1)k+n)$ is transitive on $V\Delta$. In this case $\Delta$ is not bipartite since $\Alt((s+1)k+n)$ has no index $2$ subgroup preserving a vertex-bipartition, and similarly $X$ has no intransitive normal subgroup of index $r\geq3$ yielding a $X$-normal quotient $C_r^\to$. Thus $(\Delta, X, s)\in\DD$ in this case and in particular the entries in lines 3 and 4 of Table~\ref{tab:placeholder} are correct.

Assume now that $X= \Sym((s+1)k+n)$, so $X'=\Alt((s+1)k+n)$ is an index $2$ subgroup.  Consider the vertex stabiliser \(H=\langle x_1,\ldots,x_{s+1}\rangle\), and note that each generator $x_i$ is a $k$-cycle. Suppose first that $k$ is even. Then each $x_i$ is an odd permutation and hence $H\cap X'$ has index $2$ in $H$. Thus $|V\Delta|=|X:H|=|X':H\cap X'|$ and hence $X'$ is vertex-transitive, so $X$ is quasiprimitive on $V\Delta$. The argument of the previous paragraph shows that $\Delta$ is not bipartite and $(\Delta, X, s)\in\DD$ in this case. In particular the entries in lines 1 and 2 of  Table~\ref{tab:placeholder} are correct. 
Suppose now that $k$ is odd. Since $X= \Sym((s+1)k+n)$, the integers $n$ and $s$ have different parities by Lemma~\ref{ex: alt and sym 1}, and since $\gcd(n,s+1)=1$ it follows that $n$ is odd and $s$ is even. Now each $x_i$ is an even permutation since $k$ is odd, and hence $H< X'<X$ so $X'$ has two orbits $\Delta, \Delta'$ of equal size in $V\Delta$. As $\Delta$ is connected it follows that $\Delta$ is bipartite. Since $X'$ is the setwise stabiliser of $\Delta$, and since $(X')^\Delta\cong \Alt((s+1)k+n)$, a nonabelian simple group, it follows that $X'$ acts quasiprimitively on $\Delta$, and similarly also on $\Delta'$. Hence \(\Delta\) is \(X\)-vertex-bi-quasiprimitive, and $X$ has no intransitive normal subgroup of index $r\geq3$ yielding a $X$-normal quotient $C_r^\to$. Thus $(\Delta, X, s)\in\DD$, and the entries in line 5 of  Table~\ref{tab:placeholder} are correct. Hence the first assertions, and also part (1) are proved.

Finally we prove part (2).  Suppose that $s$ is even and that $k$ is odd ($k>1$). Then there are infinitely many primes larger than $s+1$. First let us take $n$ to be any of these primes. Then $kn$ is odd and $\gcd(n,s+1)=1$. It follows from part (1) that \(\Delta\) is \(X\)-vertex-bi-quasiprimitive,  and it follows from Lemma~\ref{ex: alt and sym 1} that  \(\Delta\) is a connected  \((X,s)\)-arc-transitive digraph of valency $k$, so \((\Delta, X, s)\in\DD\). Thus part (2)(i) is proved. Alternatively take $n=2p$, where $p$ is any prime larger than $s+1$. Then again $\gcd(n,s+1)=1$, and this time $k$ is odd and $s+n$ is even. Thus by  Lemma~\ref{ex: alt and sym 1},  \(\Delta\) is a connected \(X\)-vertex-transitive, \((X,s)\)-arc-transitive digraph of valency $k$, and by line 4 of Table~\ref{tab:placeholder}, $X$ is quasiprimitive on $V\Delta$.  Thus also part (2)(ii) is proved. 
\end{proof}

Finally we summarise our knowledge about vertex-quasiprimitive and vertex-bi-quasiprimitive highly-arc-transitive digraphs associated with the digraphs from Construction~\ref{ex: alt and sym}.

\begin{proposition}\label{ex alt and sym 3}
Suppose that Construction \ref{ex: alt and sym} holds for $\Delta = \Delta(k,s,n)$ and $X$. 
\begin{enumerate}
    \item[(a)] If one of lines $1$--$4$  of Table~\ref{tab:placeholder} holds for $k,n,s$, then  $(\Delta, X, s)\in\DD$ with $\Delta$ of valency $k$ and $X$-vertex-quasiprimitive of type $AS$. Also for any positive integer $m>1$, the product $\Delta^m$ as in Construction~\ref{ex:PA} yields  $(\Delta^m, X\wr \Sym(m),s)\in\DD$ with $\Delta^m$ of valency $k^m$ and $(X\wr \Sym(m))$-vertex-quasiprimitive of type $PA$.

    \item[(b)] If one of lines $1$--$4$  of Table~\ref{tab:placeholder} holds for $k,n,s$ so $\Delta$ is $X$-vertex-quasiprimitive of valency $k$ and not bipartite, then the digraph, group $\Gamma, G$ arising from Construction~\ref{con1} applied to either $\Delta, X, s$, or $\Delta^m, X\wr \Sym(m),s$ for $m>1$, yields $(\Gamma,G,2s)\in\DD$ with $\Gamma$ being $G$-vertex-biquasiprimitive as in Lemma~\ref{l:i-or-iia}(b), and of valency $k$ or $k^m$ respectively.

    \item[(c)] If line $5$ of Table~\ref{tab:placeholder} holds for $k,n,s$  with $s\geq4$, then 
    $\Delta$ is $X$-vertex-biquasiprimitive of valency $k$ with $X=\Sym((s+1)k+n)$, and the bipartite half $\Delta'$ has valency $k^2$ and is $X'$-vertex-quasiprimitive of  type $AS$ as in Lemma~\ref{l:i-or-iia}(a) yielding $(\Delta', X',s/2)\in\DD$.
    
\end{enumerate}
    
\end{proposition}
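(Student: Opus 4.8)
The three parts are all obtained by feeding the digraphs of Construction~\ref{ex: alt and sym} into the machinery already developed: Proposition~\ref{ex: alt and sym 2} supplies the vertex-(bi-)quasiprimitivity and membership in $\DD$, Construction~\ref{ex:PA} produces the PA-type examples, and Construction~\ref{con1} together with Propositions~\ref{cor:biquasi} and~\ref{thm: bi-quasi con} produces the bi-quasiprimitive doublings. The plan is to treat (a), (b), (c) in turn.

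For part (a), I would first quote Proposition~\ref{ex: alt and sym 2}: in each of lines $1$--$4$ the group $X\in\{\Alt,\Sym\}$ is vertex-quasiprimitive and $(\Delta,X,s)\in\DD$, with $\Delta$ of valency $k$ by Lemma~\ref{ex: alt and sym 1}. Since $X$ is almost simple with socle $\Alt((s+1)k+n)$, which is transitive (by quasiprimitivity) and non-regular (the stabiliser $H\ne1$), the vertex action has type AS. For the product $\Delta^m$, Construction~\ref{ex:PA}(a),(c) give that $\Delta^m$ is $(X\wr\Sym(m),s)$-arc-transitive with $X\wr\Sym(m)$ vertex-quasiprimitive of type PA; its valency is $k^m$ since out-neighbours are chosen independently in each coordinate. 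As $\Delta$ is connected and non-bipartite (lines $1$--$4$), the direct product $\Delta^m$ is again connected and non-bipartite, and quasiprimitivity forbids any nontrivial intransitive normal subgroup, so there is no normal quotient $\C_r^\to$ (the only candidate would be $\Delta^m$ itself, excluded as its valency $k^m\ge2$). Hence $(\Delta^m,X\wr\Sym(m),s)\in\DD$.

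For part (b), the input digraph $\Delta$ (resp.\ $\Delta^m$) is connected, non-bipartite and vertex-quasiprimitive by part (a), and the acting group has a unique nonabelian minimal normal subgroup (the socle $\Alt((s+1)k+n)$, resp.\ $\Alt((s+1)k+n)^m$). Thus Proposition~\ref{cor:biquasi} applies directly and yields that the output $\Gamma$ of Construction~\ref{con1} is $G$-vertex-bi-quasiprimitive and $(G,2s)$-arc-transitive, with unique minimal normal subgroup $L\cong N\times N$. Bi-quasiprimitivity means no nontrivial normal subgroup has three or more orbits, so $\Gamma$ has no normal quotient $\C_r^\to$ and $(\Gamma,G,2s)\in\DD$; the valency is $k$ (resp.\ $k^m$) by Proposition~\ref{thm: bi-quasi con}(a). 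To identify the situation as Lemma~\ref{l:i-or-iia}(b), I would use that the bipartite half of $\Gamma$ is the direct product $\Delta\times\Delta$ (resp.\ $\Delta^m\times\Delta^m$) carrying the group $X\times X$ (resp.\ $(X\wr\Sym(m))^2$) in product action; this induced group is not quasiprimitive (its socle splits into two intransitive factors), so we are not in case Remark~\ref{r:Biquasi}(c) but in case~(d), which is exactly Lemma~\ref{l:i-or-iia}(b).

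For part (c) (line $5$, so $k,n$ odd, $s$ even, $X=\Sym((s+1)k+n)$), the bi-quasiprimitivity of $\Delta$ and its valency $k$ come from Proposition~\ref{ex: alt and sym 2}, whose proof already shows $X'=\Alt((s+1)k+n)$ acts quasiprimitively on each bipartition part; since $X'$ is simple and non-regular on $\Delta_0$ (the stabiliser $H\ne1$), the bipartite half $\Delta'=(\Delta_0,A_2)$ is $X'$-vertex-quasiprimitive, and as $X'$ is simple the only possibility in Lemma~\ref{l:i-or-iia}(a) is type AS. Membership $(\Delta',X',s/2)\in\DD$ follows from Proposition~\ref{p:bip}(c) with $X^+=X'$ and $\lfloor s/2\rfloor=s/2$ (using $s\ge4$). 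The one genuinely computational point --- and the step I expect to be the main obstacle --- is that $\Delta'$ has valency exactly $k^2$. Here I would work in the coset model $V\Delta=\{Hx\}$ with arcs $(Hx,Hax)$, so the endpoints of directed $2$-arcs from $H$ form $HaHaH/H$, and compute its size. The key identity is that $a$ conjugates $x_i$ to $x_{i-1}$ for $2\le i\le s+1$, whence $x_{s+1}=a x_s a^{-1}$ and $a\,x_{s+1}^{\,j}\,a=a^2 x_s^{\,j}$; this collapses the double coset to $HaHaH=Ha^2H$, so the valency equals $|H:H\cap H^{a^2}|$. Using that $H\cong Z_k^{s+1}=\langle x_1\rangle\times\cdots\times\langle x_{s+1}\rangle$ and that $H^{a^2}=\langle x_1,\dots,x_{s-1},x_1^{a},(x_1^a)^a\rangle$ has pairwise disjoint cyclic supports, I would then verify $H\cap H^{a^2}=\langle x_1,\dots,x_{s-1}\rangle$, of index $k^2$, giving valency $k^2$. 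The care needed is precisely in pinning down the supports of $x_1^{a}$ and $(x_1^a)^a$ (each mixing one of the last two $k$-cycle blocks with a fixed tail point) and confirming that no nontrivial element of $\langle x_s,x_{s+1}\rangle$ lies in $H^{a^2}$.
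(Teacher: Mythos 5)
Your proposal is correct, and for parts (a) and (b) it follows essentially the same route as the paper: quote Proposition~\ref{ex: alt and sym 2} and Construction~\ref{ex:PA} for the quasiprimitive AS- and PA-type examples (including the observation that quasiprimitivity rules out both bipartiteness and cyclic normal quotients, and that the direct product of connected non-bipartite digraphs stays connected and non-bipartite), then feed these into Proposition~\ref{cor:biquasi} and Proposition~\ref{thm: bi-quasi con} and identify the situation of Lemma~\ref{l:i-or-iia}(b) via the two swapped intransitive minimal normal subgroups of $G^+$. The one genuine divergence is in part (c): where the paper simply asserts ``Since $s\geqslant4$, it follows that $\Delta'$ has valency $k^2$'', you supply an explicit double-coset computation, showing $HaHaH=Ha^2H$ (because conjugation by $a$ shifts all but one of the generators $x_i$ back into $H$, and the remaining one is absorbed on the other side) and then $|H:H\cap H^{a^2}|=k^2$ via the disjoint-support/tail-point argument. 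This is a real improvement in completeness, since $s$-arc-transitivity alone does \emph{not} force the bipartite half to have valency $k^2$ (the digraphs $\C_r(v,1)$ are $s$-arc-transitive for all $s$ yet their bipartite halves have valency $v$, not $v^2$), so the claim genuinely depends on the structure of $\Delta(k,s,n)$ and deserves the verification you give. Two small slips in your sketch, neither of which affects the conclusion: your conjugation identities are internally inconsistent (if $a x_i a^{-1}=x_{i-1}$ for $2\leqslant i\leqslant s+1$, then $x_{s+1}=a^{-1}x_s a$, not $a x_s a^{-1}$, and correspondingly $a x_{s+1}^{\,j} a = x_s^{\,j}a^2$ rather than $a^2x_s^{\,j}$ -- the clean way to collapse the double coset is to absorb the single ``bad'' generator via $a x^{\,j} a = a^2 (a^{-1}x^{\,j}a)$); and in both parts (a) and (b) the membership in $\DD$ also needs the trivial observation that the digraph itself (the quotient by $N=1$) is not a directed cycle, which you do note in (a) via the valency and which is equally immediate in (b).
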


\begin{proof}
Note that in all cases we have $(\Delta, X, s)\in\DD$ of valency $k$ by Proposition~\ref{ex: alt and sym 2}. 
Suppose first that one of lines $1$--$4$  of Table~\ref{tab:placeholder} holds for $k,n,s$, so 
$\Delta$ is $X$-vertex-quasiprimitive of type $AS$ and is not bipartite (Proposition~\ref{ex: alt and sym 2}). By Proposition~\ref{cor:biquasi}, for $\Gamma, G$ arising from Construction~\ref{con1} applied to $\Delta, X, s$, we have $(\Gamma,G,2s)\in\DD$ with $\Gamma$ being $G$-vertex-biquasiprimitive. The fact that $G$ is as in Lemma~\ref{l:i-or-iia}(b) follows since $G^+=X\times X$ has two minimal normal subgroups isomorphic to $\Soc(X)$ and interchanged by $G$. Also $\Gamma$ has valency $k$ by Proposition~\ref{thm: bi-quasi con}.

Now let $m>1$ and consider the product $\Delta^m$ as in Construction~\ref{ex:PA}. As discussed in Construction~\ref{ex:PA}, $\Delta^m$ has valency $k^m$, is $(X\wr \Sym(m))$-vertex-quasiprimitive of type $PA$, and is $(X^m,s)$-arc-transitive. Thus each nontrivial normal subgroup of $X\wr \Sym(m)$ is vertex-transitive and hence $\Delta^m$ is not bipartite and has no cyclic $(X\wr \Sym(m))$-normal quotient $C_r^\to$ for any $r\geq 3$. 
Hence  $(\Delta^m, X\wr \Sym(m),s)\in\DD$, and the proof of part (a) is complete. By Proposition~\ref{cor:biquasi}, for $\Gamma, G$ arising from Construction~\ref{con1} applied to $\Delta^m, X\wr \Sym(m),s$, we have $(\Gamma,G,2s)\in\DD$ with $\Gamma$ being $G$-vertex-biquasiprimitive. The fact that $G$ is as in Lemma~\ref{l:i-or-iia}(b) follows since $G^+=(X\wr \Sym(m))\times (X\wr \Sym(m))$ (by Construction~\ref{con1}), and $G^+$ has two minimal normal subgroups isomorphic to $\Soc(X)^m$ and interchanged by $G$. Also $\Gamma$ has valency $k$ by Proposition~\ref{thm: bi-quasi con}, and this completes the proof of part (b).

Suppose finally that line $5$ of Table~\ref{tab:placeholder} holds, so $\Delta$ has valency $k$ and is $G$-vertex-biquasiprimitive with $G=\Sym((s+1)k+n)$ by Proposition~\ref{ex: alt and sym 2}. Then by Proposition~\ref{p:bip}, for the bipartite half $\Delta'$, we have  $(\Delta', G',s/2)\in\DD$ with $G'=G^+=\Alt((s+1)k+n)$. Since $s\geq4$, it follows that $\Delta'$ has valency $k^2$, and part (c) is proved.
\end{proof}

\end{document}